\DeclareMathOperator{\adj}{adj}
\def\openone{{\mathbbm{1}}}
\numberwithin{equation}{section}
\title{Weakly non-planar dimers}
\author{Alessandro Giuliani}
\address{Dipartimento di Matematica e Fisica, Università di Roma Tre, L.go S. L.
Murialdo 1, 00146 Roma, Italy} \email{alessandro.giuliani@uniroma3.it}
\author{Bruno Renzi}
\address{Dipartimento di Matematica e Fisica, Università di Roma Tre, L.go S. L.
Murialdo 1, 00146 Roma, Italy} \email{bruno.renzi@uniroma3.it}
\author{Fabio Lucio Toninelli}
\address{Technical University of Vienna, Institut f\"ur Stochastik und Wirtschaftsmathematik, Wiedner Hauptstra{\ss}e 8-10, A-1040 Vienna, Austria} \email{fabio.toninelli@tuwien.ac.at}
\newtheorem{theorem}{Theorem}
\newtheorem{remark}{Remark}
\newtheorem{assumption}{Assumption}
\newtheorem{lemma}{Lemma}
\newtheorem{definition}{Definition}
\newtheorem{proposition}{Proposition}
\newcommand{\rvline}{\hspace*{-\arraycolsep}\vline\hspace*{-\arraycolsep}}
\newcommand{\un}[1]{\underline{#1}}
     \let\th=\theta  \let\l=\lambda
   \let\o=\omega
\begin{document}

\maketitle
\begin{abstract}
We study a model of fully-packed dimer configurations (or perfect
matchings) on a bipartite periodic graph that is two-dimensional but
not planar. The graph is obtained from $\mathbb Z^2$ via the addition
of an extensive number of extra edges that break planarity (but not bipartiteness). We prove
that, if the weight $\lambda$ of the non-planar edges is small enough,
{a suitably defined} height function scales on large distances to the Gaussian Free
Field with a $\lambda$-dependent amplitude, that coincides with the anomalous exponent of dimer-dimer correlations. Because of
non-planarity, Kasteleyn's theory does not apply: the model is
{not integrable}. Rather, we map the model to a system of interacting
lattice fermions in the Luttinger universality class, which we then analyze via fermionic Renormalization Group methods.
\end{abstract}

\section{Introduction}
The understanding of the rough phase of two-dimensional random
interfaces is important in connection with the macroscopic fluctuation
properties of equilibrium crystal shapes and of the separation surface
between coexisting thermodynamic phases.
A classical instance  arises when studying the low
temperature properties of the three-dimensional (3D) Ising model in
the presence of Dobrushin Boundary Conditions (DBC). If DBC are fixed
so to induce a horizontal interface between the $+$ and $-$ phases, it
is well known \cite{Dobrushin} that at low enough temperatures the
interface is rigid. It is conjectured that between the so-called
roughening temperature and the Curie temperature, the interface
displays fluctuations with unbounded variance (the variance diverges
logarithmically with the system size), and the height profile
supposedly has a massless Gaussian Free Field (GFF) behavior at large
scales. This conjecture is completely open, in fact not even the
existence of the roughening temperature has been proved. A connected
result \cite{FS} is  logarithmic divergence of fluctuations of the 2D
SOS interface at large enough temperature; however, the result comes
with no control of the scaling limit. If DBC are chosen
so to induce a `tilted' interface, say orthogonal to the $(1,1,1)$
direction, then things are different: fluctuations of the interface
are logarithmic already at zero temperature; an exact mapping of the
height profile and of its distribution into the dimer model
on the hexagonal lattice, endowed with the uniform measure, allows one to get a full control on the
large scale properties of the interface fluctuations, which are now
proved to behave like a GFF (see \cite{KOS} for the covariance
structure, and \cite{Kenyon_notes}, as well as \cite[Section
3]{GMTaihp} for the full Gaussian limit). It is very likely that the
GFF behavior survives the presence of a small but positive
temperature; however, the techniques underlying the proof at zero
temperature, based on the exact solvability of the planar dimer
problem, break down. At positive temperatures, the very
notion of height of the interface is not  well defined,
because of overhangs; these will have a low but non-zero density at
low temperature. {It is likely, though, that the  height, even if not defined
everywhere at a microscopic level, may be well-defined in a coarse grained sense; therefore, one can still 
ask about the large-scale  behavior of its fluctuations. The coarse-grained height should admit an effective description in terms of 
a dimer model, whose distribution, however, is not expected to be uniform:} temperature induces an effective `interaction' among
dimers. 

In a previous series of works
\cite{GMTaihp,GMTHaldane,frattaglie,GMT20}, in collaboration with
V. Mastropietro, we started developing methods for the treatment of
 non-solvable dimer models via constructive, fermionic,
Renormalization Group (RG) techniques.  We exhibited an explicit class
of models, which include the 6-vertex model close to its free Fermi
point as well as several non-integrable versions thereof, for which we
proved scaling to the  GFF,
as well as the validity of a `Kadanoff' or `Haldane' scaling relation
connecting the critical exponent of the so-called electric correlator
with the one of the dimer-dimer correlation. Such a scaling relation
is the counterpart, away from the free Fermi point, of the
universality of the stiffness coefficient of the GFF first observed in
\cite{KOS}, in connection with the fact that the spectral curve of a
planar bipartite dimer model is a Harnack curve.

In this paper, {motivated by our wish to understand the height fluctuations in situations where 
the height function is not locally well-defined at a microscopic level but only in a coarse-grained sense, as in the case of the 3D Ising interface discussed above, 
and in situations where the planarity assumption on the underlying  graph  fails to be satisfied\footnote{{The interacting dimer model with plaquette interaction studied in \cite{Alet} and in \cite{HP}, which motivated our series of works \cite{GMTaihp,GMTHaldane,frattaglie,GMT20}, 
is a toy model for short range Resonance Valence Bond ground states and for liquid crystals in two dimensions. As clear from its definition, such a model 
is based on drastic simplifications of the physical phenomena one intends to study. In particular, the planarity assumption is physically unjustified: in realistic situations, nothing 
prevents the presence of defects allowing the dimers 
to arrange on a bond connecting pairs of sites beyond the nearest neighbors.}}},  we generalize our analysis to a new setting, inspired by
a problem proposed by S. Sheffield a few years ago\footnote{Open problem session at the workshop: ``Dimers, Ising Model, and their Interactions'', BIRS, 2019}. Namely, we study the
large scale properties of a suitably defined height function, for a
dimer model that is \emph{two-dimensional but non-planar}. In short, we introduce a
`weakly non-planar' dimer model, by adding non-planar edges with small
weights to a reference planar square lattice. We do so in a periodic
fashion, and in such a way that non-planar edges are restricted to
belong to cells, separated among each other by corridors
of width one, which are crossed by none of the non-planar edges. The
fact that non-planar edges avoid these corridors allows us to define a
notion of height function on the faces belonging to the corridors
themselves. We prove that this height function scales at large
distances to a GFF with stiffness coefficient that is \textit{equal}
to the anomalous critical exponent of the two-point dimer-dimer
correlation.  

As in \cite{GMTaihp,GMT20}, the proof is based on an exact
representation of the dimer model as a system of interacting lattice
fermions and in a rigorous multiscale analysis of the effective
fermionic model, which has the structure of a lattice regularization
of a Luttinger-type model. With respect to the previous works
\cite{GMTaihp,GMT20}, obtaining a fermionic representation turns out
to be much less trivial, due to the loss of planarity.  The infrared
(i.e., large-scale) analysis of the lattice fermionic model is
performed thanks to a comparison with a solvable reference continuum
fermionic model, which has been studied and constructed in a series of
works by G. Benfatto and V. Mastropietro \cite{BFM09a,BFM09b,BM02,BM04,BM05,BM10,BM11}, partly in collaboration also with P. Falco \cite{BFM09a,BFM09b,BFM14}.
The GFF behavior and the
Kadanoff-Haldane scaling relation of the dimer model follow from 
a careful comparison between the emergent chiral Ward Identities of the
reference model with exact lattice Ward Identities of the dimer model.

The first novelty of the present work, as compared to
\cite{GMTaihp,GMTHaldane,GMT20}, is related to the fermionic
representation of the weakly non-planar model. The presence of
non-planar edges requires a quite non-trivial adaptation of
Kasteleyn's theory, which is needed for the very formulation of the
finite-volume model in terms of a non-Gaussian Grassmann integral. In
fact, our non-planar model can in general be embedded on a surface of
minimal genus $g\approx L^2 $ (of the order of the number of
non-planar edges) and Kasteleyn's theory for the dimer model on
general surfaces \cite{Galluccio,Tesler} would express its partition
function as the sum of $4^g$ determinants, i.e. of $4^g$ Gaussian
Grassmann integrals, a rewriting that is not very useful for
extracting thermodynamic properties.  In this respect, the remarkable
aspect of Proposition \ref{prop:1} below is that it expresses the
partition function of \emph{just four Grassmann integrals, which are,
  however, non-Gaussian}.  The second novel ingredient of our
construction is the identification {(via the block-diagonalization
  procedure of Section \ref{sec:5.2})} of massive modes associated
with the Grassmann field which enters the fermionic representation of
the model. The fact that the elementary cell of our model consists of
$m^2$ sites, with $m$ an even integer larger or equal to $4$, implies
that the basic Grassmann field of our effective model has a minimum of
16 components. It is well known \cite{Gia,Tsv} that multi-component
Luttinger models, such as the 1D Hubbard model \cite{LiWu}, to cite
the simplest possible example, do not necessarily display the same
qualitative large distance features as the single-component one: new
phenomena and quantum instabilities, such as spin-charge separation
and metal-insulator transitions accompanied with the opening of a Mott
gap may be present and may drastically change the resulting
picture. Therefore, it is a priori unclear whether the height function
of our model should still display a GFF behavior at large
scales. Remarkably, however, the fact that the characteristic
polynomial of the reference model has at most two simple zeros,
{as proven in \cite{KOS}, directly implies} that all but two of the
components of the effective Grassmann field are massive, and they can
be preliminarily integrated out. This way, one can at last re-express
the effective massless model in terms of just two massless fields
(quasi-particle fields), in a way suitable for the application of the
multi-scale analysis developed in \cite{GMTaihp,GMT20}.  At this
point, a large part of the multi-scale analysis is based on the tools
developed in our previous works, which we will refer to for many
technical aspects, without repeating the analysis in the present
slightly different setting.

{As a side remark, we emphasize that the massive modes would arise
  also in the ``planar interacting dimer models'' of
  \cite{GMTaihp,GMT20}, if one worked on a graph whose fundamental cell
  contains $\ell\ge 2$ black/white vertices (an example is the square-octagon
  graph, see Fig. 5 in \cite{KOS}, where $\ell=4$). In contrast, one has
  $\ell=1$ in the context of \cite{GMTaihp,GMT20}.} {Our procedure consisting of (i) integrating out the massive 
 degrees of freedom and (ii) reducing to an effective massless model, implies in particular that 
the results proved in \cite{GMTaihp,GMT20} for planar interacting dimer models on the square lattice extend to the case
of general $\mathbb Z^2$-periodic two-dimensional bipartite lattices.}

\subsection{The broader context: height delocalization for discrete interface models}

Recently, remarkable progress has been made on (logarithmic)
delocalization of discrete, two-dimensional random interfaces.  We
start with the result which is maybe the closest in spirit to our
work, that is \cite{Roland1,Roland2}. These works prove, by means of bosonic,
constructive RG methods, that the height function of the discrete
Gaussian interface model (that is the lattice GFF conditioned to
be integer-valued) has, at sufficiently high temperature, the
continuum GFF as scaling limit. In a way, this result is quite complementary to ours,
since the model considered there is a perturbation of a \emph{free bosonic}
model (the lattice GFF), while in our case we perturb around a \emph{free
fermionic} one (the non-interacting dimer model). For closely related results on the 2D lattice Coulomb gas, see also \cite{Fa1,Fa2}.

In a broader perspective, there has been a number of recent results
(e.g. \cite{AHPS,CPST,Lammers1}) that prove delocalization of discrete,
two-dimensional interface models at high temperature, even though they
fall short of proving convergence to the GFF. Let us mention in
particular the recent \cite{Lammers1}, which proves with a rather soft argument a
(non-quantitative) delocalization statement for rather general height
models, under the restriction, however, that the underlying graph has maximal
degree three. For the particular case of the 6-vertex model, delocalization of the height function is known to hold in several regions of parameters \cite{DC1,DC2,Lis,Wu}
but full scaling to the GFF has been proven only in a neighborhood of the free fermion point \cite{GMT20}.

\subsection*{Organization of the article}

The rest of the paper is organized as follows: in Section \ref{sec:2}
we define the model and state our main results precisely. In Section
\ref{sec:2} we review some useful aspects of Kasteleyn's theory on toroidal graphs and
derive the Grassmann representation of the weakly non-planar dimer
model. In Section \ref{sec:4} we prove one of the main results of our
work, concerning the logarithmic behavior of the height covariance at
large distances and the Kadanoff-Haldane scaling relation, assuming
temporarily a sharp asymptotic result on the correlation functions of
the dimer model. The proof of the latter is based on a generalization
of the analysis of \cite{GMT20}, described in Section
\ref{lasezionepesante}. As mentioned above, the novel aspect of this
part consists in the identification and integration of the massive
degrees of freedom (Sections \ref{sec:5.1}-\ref{sec:5.3}), while the integration of the massless ones (Section \ref{sec:5.4})  is
completely analogous to the one described in \cite{GMT20}. Finally,
in Section \ref{sec:6}, we complete the proof of the convergence of
the height function to the GFF.
\section{Model and results}\label{sec:2}

\subsection{The ``weakly non-planar'' dimer model}

\label{sec:wnpdm}

To construct the graph $G_L$ on which our dimer model is defined, we let $L,m$ be two positive integers with {$m\ge4$} even, and we 
start with $G^{0}_L=(\mathbb Z/(L m\,\mathbb Z))^2$, which is just the
toroidal graph obtained by a periodization of $\mathbb Z^2$ with
period $L\, m$ in both horizontal and vertical directions. We
can partition $G^0_L$ into $L^2$ square cells $B_{x}$, $x=(x_1,x_2)\in \Lambda:=(-L/2,L/2]^2\cap\mathbb Z^2$, of
sidelength $m$. The graph $G^0_L$ is plainly bipartite and we color vertices of
the two sub-lattices black and white (each cell contains $m^2/2$
vertices of each color).  Black (resp. white) vertices are denoted $b$
(resp. $w$).  We let $ {\bf e_1}$ (resp. ${\bf e_2}$) denote the
horizontal (resp. vertical) vectors of length $m$ and we note that
translation by  ${\bf e_{1}}$ (resp. by ${\bf e_2}$) maps the cell $B_x$ into $B_{((x_1+1) \mod L,x_2)}$ 
(resp. $B_{( x_1, (x_2+1)\mod L)}$).  A natural choice of coordinates for vertices is the following
one: a vertex is identified by its color (black or white) and by a
pair of coordinates $(x,\ell)$ where
$x$ identifies the label of
the cell the vertex belongs to, and the ``type'' $\ell\in\mathcal I:=\{1,\dots,m^2/2\}$
identifies the vertex within the cell. It does not matter how we label
vertices within a cell, but we make the natural choice that if two
vertices are related by a translation by a multiple of
${\bf e_{1},e_2}$, then they have the same type index $\ell$.

The graph $G_L$ is obtained from $G^{0}_L$ by adding in each cell
$B_x$ a finite number of edges among vertices of opposite color (so
that $G_L$ is still bipartite), with the constraint that $G_L$ is
invariant under translations by multiples of ${\bf e_{1},e_2}$ (i.e.,
vertex $w$ of coordinates $(x,\ell)$ is joined to vertex $b$ of
coordinates $(x,\ell')$ if and only if the same holds for any other
$x'\in\Lambda$). 
See Fig. \ref{fig:1} for an example.
\begin{remark}
  It is easy to see that we need that $m\ge4$ for this construction to work: if $m=2$, the two black edges in the cell are already connected to the two black vertices and there are no non-planar edges that can be added.
\end{remark}
Note that $G_L$ is in
general non-planar, even in the full-plane limit $L\to\infty$. 
\begin{figure}
\centering
\includegraphics[scale=1]{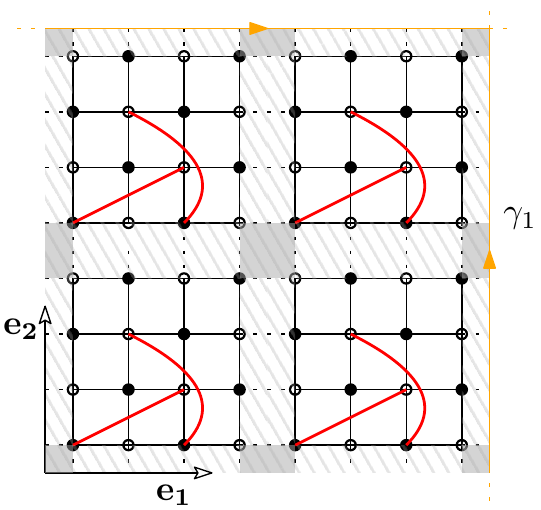}
\caption{An example of graph $G_L$ with $L=2$, $m=4$ and two
  non-planar edges (in red) per cell. The height function is defined
  on the set $F$ of dashed faces outside of cells. Faces colored gray
  are those in $\bar F$ (they share a vertex with four different
  cells).}\label{fig:1}
\end{figure}

Let $E_L$ denote the set of edges of $G_L$: we write $E_L$ as the disjoint union $E_L=E^0_L\cup N_L$ where 
$E^0_L$ are the edges  of $G^{0}_L$ (we call these ``planar edges'') and $N_L$ 
 (we call these ``non-planar edges'') are the extra ones. Each edge  $e\in E_L$ 
is assigned a positive weight: since we are interested in the
situation where the weights of non-planar edges are small compared to
those of planar edges, we first take a collection of weights $\{\tilde
t_e\}_{e\in E_L}, \tilde t_e>0$ that is invariant under translations by multiples of ${\bf
  e_{1},e_2}$, and then we establish that the weight of a planar edge
$e$ is $t_e=\tilde t_e$ while that of a non-planar edge is
$t_e=\lambda \tilde t_e$, where $\lambda$ is a real parameter,
that will be taken small later. 

To simplify expressions that follow, we will sometimes write ${\bf x}
\in {\bf \Lambda}$ instead of $(x, \ell) \in \Lambda \times
\mathcal{I}$ for the coordinate of a vertex of $G_L$.  Also, we label
the collection of edges in $E_L$ whose black vertex has type
$\ell$ with a label $j\in \mathcal J_{\ell}=\{1,\dots,|\mathcal
J_{\ell}|\}$. The labeling is done in such a way that two edges that
are obtained one from the other via a translation by a multiple of
${\bf e_1,e_2}$ have the same label.  Note that $|\mathcal J_{\ell}|\ge4$, and it is strictly larger than four if there are
non-planar edges incident to the black vertex of type $\ell$. By convention, we label
$j=1,\dots 4$ the four edges of $G^0_L$ belonging to $\mathcal J_{\ell}$, starting from the horizontal one whose left endpoint is black,
and moving anti-clockwise.

The set of perfect matchings (or dimer configurations) of $G_L$ is
denoted $\Omega_L$. Each $M\in \Omega_L$ is a subset of $E_L$ 
and the set of perfect matchings that contain only planar edges is
denoted $\Omega^{0}_L$.  Our main object of study is the probability
measure on $\Omega_L$ given by
\begin{equation}
\label{eq:ZLl}
\mathbb{P}_{L,\lambda}(M)=\frac{w(M) }{Z_{L,{\bf \lambda}}}\openone_{ M\in\Omega_L}, \;
w(M)=\prod_{e \in M} t_e,\;  Z_{L,\lambda}=\sum_{M\in\Omega_L} w(M).
\end{equation}

We are interested in the limit where $L$ tends to infinity while $m$
(the cell size) is fixed. In this limit, the graph $G^{0}_L$ becomes
the (planar) graph $G^{0}_\infty=\mathbb Z^2$ while $G_L$ becomes a periodic,
bipartite but in general non-planar graph $G_\infty$. Cells $B_x$ of the
infinite graphs are labelled by $x\in \mathbb Z^2$. We let $\Omega$
(resp. $\Omega^{0}$) denote the set of perfect matchings of $G_\infty$
(resp. of $\mathbb Z^2$).

In the case $\lambda=0$, the measure $\mathbb{P}_{L,0}$ is supported
on $\Omega^{0}_L$: in fact, $\mathbb P_{L,0}$ is just the
Boltzmann-Gibbs measure of the dimer model on the (periodized) square
grid, with edge weights of periodicity $m$ (we will refer to this as
the ``non-interacting dimer model''). The non-interacting model is
well understood via Kasteleyn's \cite{Kasteleyn,Kasteleyn2,Kenyonlocal} (and
Temperley-Fisher's \cite{TemperleyFisher}) theory, that allows to
write its partition and correlation functions in determinantal form.
According to the choice of the edge weights $\{t_e\}$, the
non-interacting model can be either in a liquid (massless), gaseous
(massive) or frozen phase, see \cite{KOS}. In particular, in the
liquid phase correlations decay like the squared inverse distance (see
for instance \eqref{asymcorrfunc}-\eqref{asym0} for a more precise
statement).  \emph{In this work, we assume that the edge weights  are such that for $\lambda=0$, the model is in a massless phase.}

The essential facts from Kasteleyn's theory that are needed for the
present work are recalled in Section \ref{sec:KastTheory}.  In
particular, we emphasize that all the statistical properties of the
non-interacting model are encoded in the so-called
\emph{characteristic polynomial} $\mu$ (see \eqref{eq:mu}), that is
nothing else but the determinant of the Fourier transform of the
so-called Kasteleyn matrix. Then, the assumption that the $\lambda=0$
model is in the massless phase, can be more precisely stated as
follows:
\begin{assumption}
 \label{ass:1}
  The edge weights $\{t_e\}$ are such that the {``characteristic
    polynomial'' $\mu:[-\pi,\pi]^2\mapsto \mathbb C$} (see formula \eqref{eq:mu} below and the discussion following it) of the non-interacting dimer model has exactly two zeros {$p^+_0,p^-_0$ (distinct and simple)}. 
\end{assumption}

We recall from \cite{KOS} that this is a non-empty condition on the
edge weights (in fact, this set of edge weights is a non-trivial open
set). {{We also remark that if Assumption \ref{ass:1} is not satisfied, then we are in one of the following situations: 
    \begin{enumerate}
    \item  The edge weights are such that $\mu$ has no zeros 
    on $[-\pi,\pi]^2$, corresponding to the frozen or to the gaseous
    phases of the non-interacting dimer model. In this case, the dimer model can be easily shown to
    be stable under the addition of dimer interaction such as those
    treated in \cite{GMT20} or of non-planar edges of small weight
    such as those treated in this paper. By `easily', we mean `via a
    single-step fermionic cluster expansion', which shows
    that the fluctuations of the perturbed model have the same
    qualitative behavior of the unperturbed ones. 
    In this case, the height function displays no
    interesting behavior in the scaling limit.    
  \item The edge weights are such that $\mu$ has one double zero $p_0$ and: either the system is at the boundary separating the liquid from the gaseous or frozen phases (in which case $\nabla\mu(p_0)\neq0$), or it is at a degenerate point within the liquid phase
 (in which case, called `real node' in \cite{KOS}, $\nabla\mu(p_0)=0$). 
 These cases display rich and interesting behaviors: e.g., the fluctuations of the height level lines of the integrable dimer model, close to the liquid/frozen and liquid/gaseous boundaries converge to the so-called Airy process \cite{Beffara-Johansson, Johansson}. We postpone the analysis of non-integrable dimer models in such critical cases to future work.
    \end{enumerate}}}
\subsection{Results}
\label{sec:results}

Our main goal is to understand the large-scale properties of the height function under the limit measure $\mathbb P_\lambda$, which is the weak limit as  $L\to\infty$ of $\mathbb
  P_{L,\lambda}$. The fact that this limit exists, provided that $|\lambda|\le \lambda_0$ for a sufficiently small $\lambda_0$, is a byproduct of the proof.

  Our first main result concerns the  large distance asymptotics of the truncated dimer-dimer correlations. We use the notation $\mathds 1_e$ for the indicator function that the edge $e$ is occupied by a dimer, and $\mathbb E_\lambda(f;g)$ for $\mathbb E_\lambda(f g)-\mathbb E_\lambda(f)\mathbb E_\lambda(g)$.

\begin{theorem}\label{thm:1} Choose the dimer weights on the planar edges as in Assumption \ref{ass:1}.  
  There exists   $\lambda_0>0$ and analytic functions $\nu:[-\lambda_0,\lambda_0]\mapsto \mathbb R^+$,
  $\alpha_\omega,\beta_\omega:[-\lambda_0,\lambda_0]\mapsto \mathbb C\setminus\{0\}$ (labelled by $\omega\in\{+,-\}$ and satisfying $\overline{\alpha_+}=- \alpha_{-},  \overline{\beta_+}=- \beta_{-}$ and $\alpha_\omega(\lambda)/\beta_\omega(\lambda)\not\in\mathbb R$), 
  $K_{\omega,j,\ell}, H_{\omega,j,\ell}:[-\lambda_0,\lambda_0]\mapsto \mathbb C$ (labelled by $\omega\in\{+,-\}$, $\ell\in\mathcal I$, $j\in\mathcal J_{\ell}$ and satisfying 
  $K_{\omega,j,\ell}=\overline{K_{-\omega,j,\ell}}$, $H_{\omega,j,\ell}=\overline{H_{-\omega,j,\ell}}$) and $p^\omega:[-\lambda_0,\lambda_0]\mapsto [-\pi,\pi]^2$ (labelled by $\o\in\{-1,+1\}$ and satisfying $p^+=-p^-$) 
   such that, for any two edges $e,e'$ 
  with black vertices $(x,\ell), (x',\ell')\in \mathbb Z^2\times\mathcal I$ such that $x\neq x'$ and labels $j\in\mathcal J_{\ell}$, $j'\in\mathcal J_{\ell'}$, 
\begin{equation}\label{eq:asy}
\mathbb E_{\lambda}(\mathds 1_{e};\mathds 1_{e'})=\sum_{\omega}\Biggl[\frac{K_{\omega,j,\ell}K_{\omega,j',\ell'}}{(\phi_\omega(x-x'))^2}+
\frac{H_{\omega,j,\ell}H_{-\omega,j',\ell'}}{|\phi_\omega(x-x')|^{2\nu}}e^{2ip^\omega\cdot(x-x')}\Biggr]+\text{Err}(e,e'),\end{equation}
where, letting $x=(x_1,x_2)$,
\begin{eqnarray}
  \label{eq:phi}
  \phi_\omega(x):=\omega(\beta_\omega(\lambda)x_1-\alpha_\omega(\lambda)x_2)
\end{eqnarray}
and $|\text{Err}(e,e')|\le C|x-x'|^{-3+O(\lambda)}$ for some $C>0$ and $O(\lambda)$ independent of $x,x'$. 
Moreover, $\nu(\l)=1+O(\l)$.
\end{theorem}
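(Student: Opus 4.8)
The plan is to prove Theorem \ref{thm:1} by establishing the exact fermionic representation of the weakly non-planar model (via Proposition \ref{prop:1}), then performing a multiscale Renormalization Group analysis on the resulting interacting lattice fermion system. First I would use the Grassmann representation to write $\mathbb E_\lambda(\mathds 1_e;\mathds 1_{e'})$ as a correlation of fermionic observables. Since the fundamental cell contains $m^2$ sites, the basic Grassmann field has at least $16$ components, so the crucial preliminary step is the \emph{block-diagonalization} of Section \ref{sec:5.2}: one must show that Assumption \ref{ass:1} (the characteristic polynomial $\mu$ has exactly two simple zeros $p^\pm_0$) forces all but two of the field components to be \emph{massive}. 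Invoking the result of \cite{KOS} that the spectral curve of a bipartite planar dimer model has at most two simple zeros, I would integrate out the massive modes in a single convergent step, reducing the model to an effective theory in two \emph{massless} quasi-particle fields localized near the Fermi points $\pm p_0$.

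Next I would set up the infrared multiscale expansion for the effective massless model, which by construction has the structure of a lattice Luttinger model. Following the strategy of \cite{GMTaihp,GMT20}, the reference point is the exactly solvable continuum model of Benfatto--Mastropietro \cite{BFM09a,BFM09b,BM02,BM04,BM05,BM10,BM11}. The RG flow of the relevant and marginal couplings must be controlled: the effective velocity and the marginal four-fermion coupling flow to a line of fixed points parametrized by $\lambda$, and the quadratic part of the effective action defines the analytic functions $\alpha_\omega(\lambda),\beta_\omega(\lambda)$ entering the complex linear forms $\phi_\omega$ in \eqref{eq:phi}, while the anomalous dimension of the density--density channel produces the exponent $\nu(\lambda)=1+O(\lambda)$. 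The constants $K_{\omega,j,\ell},H_{\omega,j,\ell}$ arise as the amplitudes relating the lattice dimer observables $\mathds 1_e$ to the emergent continuum fields (charge and ``electric''/backscattering densities respectively); the symmetry constraints $\overline{\alpha_+}=-\alpha_-$, $K_{\omega,j,\ell}=\overline{K_{-\omega,j,\ell}}$, etc., follow from the reality of the dimer measure together with the $p^+=-p^-$ symmetry of the Fermi points.

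The sharp asymptotics \eqref{eq:asy} then follow from matching the lattice correlation to the continuum one: the $(\phi_\omega(x-x'))^{-2}$ term is the dominant contribution from the charge channel (exponent exactly $2$, protected by a Ward identity), the oscillating $|\phi_\omega|^{-2\nu}e^{2ip^\omega\cdot(x-x')}$ term comes from the backscattering channel with the anomalous exponent, and the remainder $\text{Err}$ collects the faster-decaying irrelevant contributions, bounded by $C|x-x'|^{-3+O(\lambda)}$ via the convergence of the tree expansion. Analyticity in $\lambda$ of all the emergent constants follows from the uniform convergence of the multiscale expansion for $|\lambda|\le\lambda_0$.

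I expect the main obstacle to be the block-diagonalization and the clean separation of massive from massless modes in the multi-component setting. As the excerpt itself emphasizes, multi-component Luttinger models can exhibit spin--charge separation, Mott gaps, or metal--insulator transitions that would spoil the GFF picture; the nontrivial point is that the ``at most two simple zeros'' property of \cite{KOS} rules out these instabilities by guaranteeing that exactly two massless quasi-particle fields survive. Making the integration of the $\ge 14$ massive components rigorous and uniform in $\lambda$ — so that the effective massless theory genuinely falls into the single universality class analyzed in \cite{GMT20} — is the delicate step, together with the nontrivial adaptation of Kasteleyn's theory to the non-planar graph required to obtain the four-Grassmann-integral representation in the first place.
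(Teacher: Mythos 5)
Your proposal follows essentially the same route as the paper: Theorem \ref{thm:1} is obtained there as an immediate corollary of Proposition \ref{asimptrelprpgtr} (identifying $K_{\omega,j,\ell}$ with $(2\pi Z\sqrt{1-\tau^2})^{-1}K^{(1)}_{\omega,j,\ell}$, $H_{\omega,j,\ell}$ with $(\sqrt{B}/2\pi)K^{(2)}_{\omega,j,\ell}$ and $\nu$ with $(1-\tau)/(1+\tau)$), whose proof in Section \ref{lasezionepesante} proceeds exactly as you outline --- Grassmann representation via Proposition \ref{prop:1}, ultraviolet integration, Schur-complement block-diagonalization and single-step integration of the massive components (justified by Assumption \ref{ass:1} and the two-simple-zeros result of \cite{KOS}), reduction to the single massless quasi-particle field of \cite{GMT20}, and infrared comparison with the Benfatto--Mastropietro reference model. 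You correctly identify the massive-mode integration and the non-planar Kasteleyn adaptation as the genuinely new steps relative to \cite{GMT20}.
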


Even if not indicated explicitly, the functions $\nu,\alpha_\omega,\beta_\omega, K_{\omega,j,\ell}, H_{\omega,j,\ell},p^\o$ all depend non-trivially on the
edge weights $\{t_e\}$. In particular, generically, $\nu=1+c_1\lambda+O(\lambda^2)$, with $c_1$ a non zero coefficient, which depends upon the edge weights (this was already observed in \cite{frattaglie}
for interacting dimers on planar graphs); therefore, generically, $\nu$ is larger or smaller than $1$, depending on the sign of $\lambda$. 

At $\lambda=0$, \eqref{eq:asy} reduces to the known asymptotic formula
for the truncated dimer-dimer correlation of the standard planar dimer
model, which is reviewed in the next section. The most striking
difference between the case $\lambda\neq0$ and $\lambda=0$ is the
presence of the critical exponent $\nu$ in the second term in square
brackets in the right hand side of \eqref{eq:asy}. It shows that the
presence of non-planar edges in the model qualitatively changes the
large distance decay properties of the dimer-dimer
correlations. Therefore, naive universality, in the strong sense that
all critical exponents of the perturbed model are the same as those of
the reference unperturbed one, fails.  In the present context the
correct notion to be used is that of `weak universality', due to
Kadanoff, on the basis of which we expect that the perturbed model is
characterized by a number of exact scaling relations; these should
allow us to reduce all the non-trivial critical exponents of the
model (i.e., those depending continuously on the strength of the
perturbation) to just one of them, for instance $\nu$ itself. A
rigorous instance of such a scaling is discussed in Remark
\ref{rem:striking} below.

The weak universality picture is formally predicted by bosonization methods (see the introduction of \cite{GMTHaldane} for a brief overview), 
which allow one to express the large distance asymptotics of all correlation functions in terms of a single, underlying, massless GFF. Such a GFF is nothing but the scaling limit of the height function of the model, as discussed in the following. Given a perfect matching $M\in\Omega^{0}$ of the infinite graph
$G^{0}_\infty=\mathbb Z^2$, there is a standard definition of height
function on the dual graph: given two faces $\xi,\eta$ of $\mathbb
Z^2$, one defines
\begin{align}\label{heightfunction} h(\eta)-h(\xi)=\sum_{e \in C_{\xi \to \eta}}\sigma_e\Big{(}\mathbbm{1}_{e \in M}-\frac14 \Big{)}
\end{align}
together with $h(\xi_0)=0$ at some reference face $\xi_0$. Here, $C_{\xi \to \eta}$ is a nearest-neighbor path from $\xi$ to $\eta$ and $\sigma_e$ is a sign which equals $+1$ if the edge $e$ is crossed with the white vertex on right and $-1$ otherwise). The definition is well-posed since it is independent of the choice of the path. We recall that under $\mathbb P_0$, the height 
function is known to admit a  GFF scaling limit \cite{Kenyon_notes,GMTaihp}.

A priori, on a non-planar graph such as $G$, there is no canonical
bijection between perfect matchings and height functions. However, since the
non-planarity is ``local'' (non-planar edges do not connect different
cells), there is an easy way out. Namely, let $F$ denote the set of
faces of $\mathbb Z^2$ that do not belong to any of the cells $B_x$
(see Figure \ref{fig:1}). Given a perfect matching $M\in \Omega$, define an
integer-valued height function $h$ on faces $\xi\in F$ by setting it
to zero at some reference face $\xi_0\in F$ and by imposing
\eqref{heightfunction} for any path $C_{\xi\to\eta}$ that uses only
faces in $F$. It is easy to check that $h$ is then
independent of the choice of path.

Our second main result implies in particular that the variance of the height
difference between faraway faces in $F$ grows logarithmically with the
distance. For simplicity, let us restrict our attention to the subset
$\bar F\subset F$ of faces that share a vertex with four cells (see Fig. \ref{fig:1}): if a face in $ \bar F$ shares a vertex with 
$B_x,B_{x-(0,1)}, B_{x-(1,0)},B_{x-(1,1)}$, then we denote it  by $\eta_x$. 
\begin{theorem}\label{mainthrm} Under the same assumptions as Theorem \ref{thm:1}, 
for $x^{(1)},\dots, x^{(4)}\in \mathbb Z^2$,
\begin{multline}
\label{eq:35l}
\mathbb E_\l\left[(h(\eta_{x^{(1)}})-h(\eta_{x^{(2)}}));(h(\eta_{x^{(3)}})-h(\eta_{x^{(4)}}))
\right]\\=\frac {\nu(\lambda)}{2\pi^2}
    \Re\left[\log \frac{( \phi_+({x^{(4)}})- \phi_+({x^{(1)}}))( \phi_+({x^{(3)}}) - \phi_+({x^{(2)}}))}{(\phi_+({x^{(4)}})- \phi_+({x^{(2)}}))(\phi_+({x^{(3)}})-\phi_+({x^{(1)}}))}
    \right]\\
+O\left(\frac1{\min_{i\ne j\le 4}|x^{(i)}-x^{(j)}|^{1/2}+1}\right)
\end{multline}
where $\nu$ and $\phi_+$ are the same as in Theorem \ref{thm:1}
\end{theorem}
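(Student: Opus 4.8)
The plan is to derive Theorem \ref{mainthrm} from the dimer-dimer asymptotics of Theorem \ref{thm:1} by expressing height increments as sums of edge-occupation indicators and then resumming the resulting two-point correlations. Concretely, by definition \eqref{heightfunction}, each height difference $h(\eta_{x^{(i)}})-h(\eta_{x^{(j)}})$ is a signed sum $\sum_{e\in C}\sigma_e(\mathds 1_e-\tfrac14)$ over edges $e$ crossed by a path $C$ in $F$ joining the two faces. Hence the covariance on the left of \eqref{eq:35l} becomes a double sum $\sum_{e,e'}\sigma_e\sigma_{e'}\,\mathbb E_\lambda(\mathds 1_e;\mathds 1_{e'})$, and the constant $-\tfrac14$ drops out of the truncated correlation. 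I would then insert \eqref{eq:asy} into this double sum. The contribution splits into a ``slowly oscillating'' part coming from the $K_{\omega}K_{\omega}/\phi_\omega^2$ term and an ``oscillating'' part carrying the factor $e^{2ip^\omega\cdot(x-x')}$, together with an error controlled by $|x-x'|^{-3+O(\lambda)}$.

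The key analytic step is the summation of the $1/\phi_\omega(x-x')^2$ term along the two paths. Here I would exploit the discrete analogue of the identity $\sum_{e\in C}\sigma_e\partial_z G(z)\approx G(\text{endpoint})-G(\text{startpoint})$: since $\phi_\omega$ is a (complex) linear function of $x$ and $1/\phi_\omega^2$ is, up to a constant, the $x$-derivative of $-1/\phi_\omega$, the sum over a path telescopes to boundary terms evaluated at the four faces $\eta_{x^{(i)}}$, producing precisely a combination of $\log$'s of differences $\phi_+(x^{(i)})-\phi_+(x^{(j)})$. The coefficients $K_{\omega,j,\ell}$ enter through lattice sums over the edge-type and path-direction structure; the crucial point is that, after summing over the four planar edge types $j=1,\dots,4$ emanating from each black vertex with the correct signs $\sigma_e$, these combine into the universal prefactor $\nu(\lambda)/(2\pi^2)$ in front of the logarithm. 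Matching this amplitude is exactly where the lattice Ward identities and the Kadanoff-Haldane scaling relation (the statement $\alpha_\omega/\beta_\omega\notin\mathbb R$, $\overline{\alpha_+}=-\alpha_-$, together with the relation between $K_\omega$ and $\nu$) are used: the sum rule forces the coefficient of the $\log$ to equal $\nu/(2\pi^2)$ rather than some independent combination of the $K$'s. I expect the cleanest route is to first establish the identity at $\lambda=0$, where everything is explicit and $\nu=1$, and then argue that the structure of the resummation is unchanged for $\lambda\neq 0$, only the amplitude being renormalized to $\nu(\lambda)$.

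The oscillating term and the error term must be shown to be subleading. For the oscillating part, the factor $e^{2ip^\omega\cdot(x-x')}$ summed against a smooth path weight yields cancellations (discrete summation by parts against the non-vanishing phase gradient $2p^\omega\neq 0$), so its contribution to the covariance is bounded uniformly rather than growing logarithmically; the price of the exponent $2\nu$ in the denominator only helps, since $2\nu>1$. The $\text{Err}(e,e')$ term, decaying as $|x-x'|^{-3+O(\lambda)}$, is summable over the one-dimensional paths and contributes at most $O(\min_{i\neq j}|x^{(i)}-x^{(j)}|^{-1/2})$, consistent with the stated remainder. Care is needed near the endpoints of the paths, where $x=x'$ is approached and the asymptotic formula \eqref{eq:asy} (valid for $x\neq x'$) ceases to apply; the finitely many such short-distance pairs contribute only an $O(1)$ boundary correction, absorbed into the error.

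The main obstacle I anticipate is the bookkeeping that identifies the prefactor of the logarithm as exactly $\nu(\lambda)/(2\pi^2)$. This requires carefully tracking how the edge-type coefficients $K_{\omega,j,\ell}$ and the path signs $\sigma_e$ combine when the path $C_{\xi\to\eta}$ wanders through the corridors in $F$, and showing that the result is independent of the path (as it must be, since $h$ is well-defined on $F$) and depends on the endpoints only through $\phi_+$. The discrete-to-continuum passage, i.e. replacing the telescoped lattice sum by the continuum Green's function $\log\phi_+$ with controlled error $O(|x^{(i)}-x^{(j)}|^{-1/2})$, is the quantitative heart of the argument; it is essentially a lattice potential-theory estimate, analogous to the one carried out in \cite{GMTaihp} for the planar case, and I would adapt that machinery, the new feature being merely the anisotropic complex linear change of variables $x\mapsto\phi_+(x)$ and the presence of multiple edge types per cell.
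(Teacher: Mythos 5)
Your overall skeleton coincides with the paper's: write each height increment via \eqref{heightfunction}, expand the covariance as a double sum of $\sigma_e\sigma_{e'}\,\mathbb E_\lambda(\mathds 1_e;\mathds 1_{e'})$ over two well-separated paths, insert the dimer-dimer asymptotics, recognize the non-oscillating term as a Riemann sum (equivalently, your telescoping) for $\int\int dz\,dz'\,(z-z')^{-2}$, and relegate the oscillating part and the $\mathrm{Err}$ term to the remainder. The genuine gap is in the step you yourself flag as the main obstacle: the identification of the prefactor as $\nu(\lambda)/(2\pi^2)$. In the paper this is Proposition \ref{th:3}, i.e.\ the sum rule \eqref{haldkadscaling}, $\sum_{e\in\mathcal E_{q,0}}\sigma_e K^{(1)}_{\omega,j(e),\ell(e)}/(Z\sqrt{1-\tau^2})=-i\omega\sqrt{\nu}\,\mathrm{d}_q\phi_\omega$, and it \emph{cannot} be derived from the inputs you allow yourself, namely Theorem \ref{thm:1} plus the exact lattice Ward identity. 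Theorem \ref{thm:1} treats $K_{\omega,j,\ell}$, $\nu$, $\alpha_\omega$, $\beta_\omega$ as unrelated analytic functions of $\lambda$: it contains no relation between the $K$'s and $\nu$, so your parenthetical appeal to ``the relation between $K_\omega$ and $\nu$'' assumes exactly what must be proven. On the other hand, the lattice Ward identity \eqref{FspWid} constrains the vertex function $\hat G^{(2,1)}$ and the propagator $\hat G^{(2)}$, neither of which appears in Theorem \ref{thm:1} at all; by itself it says nothing about the dimer-dimer amplitudes.

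The bridge, in the paper, is the stronger Proposition \ref{asimptrelprpgtr}: (i) the infrared singularity of the lattice vertex function involves the \emph{same} constants $K^{(1)}_{\omega,j,\ell}$ that govern the dimer-dimer correlation \eqref{hh110}, see \eqref{h10a} --- a fact produced by the renormalization group construction, not by the statement of Theorem \ref{thm:1}; and (ii) the reference continuum model obeys its own emergent chiral Ward identities \eqref{h11}--\eqref{symmrefprop}, carrying the explicit constants $Z$ and $\tau$. Substituting \eqref{h10ab}--\eqref{h10a} into the lattice identity \eqref{FspWid} and comparing with \eqref{h11}--\eqref{symmrefprop} yields the linear system \eqref{sollinsyst}, whose solution is \eqref{haldkadscaling}; the factor $\sqrt{\nu}=\sqrt{(1-\tau)/(1+\tau)}$ emerges precisely from the mismatch between the $\tau$-independent lattice identity and the $\tau$-dependent emergent one. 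Your fallback --- prove the identity at $\lambda=0$ and argue that only the amplitude renormalizes --- does not close the gap either: that the amplitude renormalizes \emph{exactly} to $\nu(\lambda)$, the same exponent appearing in the oscillating part of \eqref{eq:asy}, is the Kadanoff--Haldane relation itself, a nonperturbative identity which is the whole content of the theorem and is invisible to any argument that merely perturbs the free case. Your treatment of the oscillating and error terms and the discrete-to-continuum step are otherwise consistent with the paper's proof, which also chooses the two paths at mutual distance of order $\min_{i\ne j}|x^{(i)}-x^{(j)}|$, so the short-distance pairs you worry about do not in fact occur.
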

Note that in particular, taking $x^{(1)}=x^{(3)}=x,x^{(2)}=x^{(4)}=y$ we have
\begin{eqnarray}
  {\rm Var}_{\mathbb P_\lambda}(h(\eta_{x})-h(\eta_{y}))= \frac{\nu(\lambda)}{\pi^2}\Re \log(\phi_+(x)-\phi_x(y))+O(1)=\frac{\nu(\lambda)}{\pi^2}\log|x-y|+O(1)
\end{eqnarray}
as $|x-y|\to\infty$.
\begin{remark}
  \label{rem:striking}
The remarkable fact of this result is that the `stiffness' coefficient $\nu(\lambda)/\pi^2$ of the GFF is the same, up to the $1/\pi^2$ factor, as the 
critical exponent of the oscillating part of the dimer-dimer correlation. There is no a priori reason that the two coefficients should be the same, and it is actually a deep implication of our proof that this is the case. Such an identity is precisely one of the scaling relations predicted by Kadanoff and Haldane in the context of the {{8-}}vertex, Ashkin-Teller, XXZ, and Luttinger liquid models, which are different models in the same universality class as our non-planar dimers (see \cite{GMTHaldane} for additional discussion and references).
\end{remark}

Building  upon the proof of Theorem \ref{mainthrm}, we also obtain bounds on the higher point cumulants of the height; these, in turn, imply convergence of the height profile 
to a massless GFF: 
\begin{theorem}
  \label{th:GFF}
  Assume that $|\lambda|\le \lambda_0$, with $\lambda_0>0$ as in Theorem \ref{mainthrm}.  For every $C^\infty$, compactly supported test function $f:\mathbb R^2\mapsto\mathbb R$ of zero average and $\epsilon>0$, define
  \begin{equation}
    \label{eq:htestata}
    h^\epsilon(f):=\epsilon^2 \sum_{x\in\mathbb Z^2} \big(h(\eta_x)-\mathbb E_\lambda(h(\eta_x))\big)  
    f(\epsilon x).
  \end{equation} 
  Then, one has the convergence in distribution
  \begin{equation}
    \label{eq:convGFF}
    h^\epsilon(f)\stackrel{\epsilon\to0}
   \Longrightarrow \mathcal N(0,\sigma^2(f))
  \end{equation}
  where $\mathcal N(0,\sigma^2(f))$ denotes a centered Gaussian distribution of variance
  \[
\sigma^2(f):=\frac{\nu(\lambda)}{2\pi^2}\int_{\mathbb R^2}dx\int_{\mathbb R^2}dy f(x)\Re[\log \phi_+(x-y)]f(y).
  \]
\end{theorem}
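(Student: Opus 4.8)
The plan is to establish the Gaussian limit by the method of moments, or equivalently by proving convergence of all cumulants. Since $h^\epsilon(f)$ is centered by construction, it suffices to show that $\mathrm{Var}\big(h^\epsilon(f)\big)\to\sigma^2(f)$ and that every cumulant $\kappa_n\big(h^\epsilon(f)\big)$ of order $n\ge 3$ tends to $0$ as $\epsilon\to0$. Because the Gaussian law is determined by its moments, convergence of the cumulants (hence of the moments) to those of $\mathcal N(0,\sigma^2(f))$ yields convergence in distribution by the Fr\'echet--Shohat theorem. All moments of $h^\epsilon(f)$ are finite, since $f$ has compact support, so the defining sum is finite, and the height increments have finite moments of every order by the cumulant bounds discussed below.

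For the variance, I would write
\begin{equation}
\mathrm{Var}\big(h^\epsilon(f)\big)=\epsilon^4\sum_{x,y\in\mathbb Z^2} f(\epsilon x)f(\epsilon y)\,\mathbb E_\lambda\big(h(\eta_x);h(\eta_y)\big)
\end{equation}
and use the polarization identity $\mathbb E_\lambda(A;B)=\tfrac12[\mathrm{Var}(A)+\mathrm{Var}(B)-\mathrm{Var}(A-B)]$ with $A=h(\eta_x)$, $B=h(\eta_y)$. Since $f$ has zero average, the Riemann sum $\epsilon^2\sum_x f(\epsilon x)$ converges to $\int f=0$ faster than any power of $\epsilon$ (Euler--Maclaurin for a smooth compactly supported summand), so the terms proportional to $\mathrm{Var}(h(\eta_x))$ and $\mathrm{Var}(h(\eta_y))$ drop out and one is left with $-\tfrac12\epsilon^4\sum_{x,y}f(\epsilon x)f(\epsilon y)\,\mathrm{Var}(h(\eta_x)-h(\eta_y))$. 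Inserting the logarithmic asymptotics of Theorem \ref{mainthrm} and using the linearity of $\phi_+$, the divergent piece $\propto\log(1/\epsilon)$ factorizes as $\big(\epsilon^2\sum_x f(\epsilon x)\big)^2$ and is again killed by the zero-average condition, while the remaining Riemann sum converges to the stated double integral $\sigma^2(f)$. The error term $O(|x^{(i)}-x^{(j)}|^{-1/2})$ of Theorem \ref{mainthrm}, weighted by $\epsilon^4 f(\epsilon x)f(\epsilon y)$, is $o(1)$.

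The core of the proof is the control of the higher cumulants. Writing each centered height increment as a linear functional of the dimer occupations, $h(\eta_x)-\mathbb E_\lambda(h(\eta_x))=\sum_{e}\sigma_e\big(\mathds 1_{e}-\mathbb E_\lambda(\mathds 1_e)\big)$ along a path, one expresses
\begin{equation}
\kappa_n\big(h^\epsilon(f)\big)=\epsilon^{2n}\sum_{x_1,\dots,x_n}\Big(\prod_{i=1}^n f(\epsilon x_i)\Big)\,\kappa_n\big(h(\eta_{x_1}),\dots,h(\eta_{x_n})\big),
\end{equation}
and the $n$-point height cumulant is a multilinear form in the truncated $n$-point dimer correlations. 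Building on the multiscale construction used for Theorem \ref{thm:1} (Sections \ref{sec:5.1}--\ref{sec:5.4}), I would prove that, for $n\ge 3$, these height cumulants are bounded uniformly in the separations, with no logarithmic enhancement and with enough decay that $\kappa_n\big(h^\epsilon(f)\big)=O(\epsilon^{\theta_n})$ for some $\theta_n>0$. The dimensional heuristic is that the truncated $n$-point dimer correlation carries total scaling dimension $n$ (one unit per density insertion), so the marginal $|x|^{-2}$ behavior responsible for the logarithm in the variance occurs only at $n=2$; for $n\ge3$ the scaling is strictly subleading and the $\epsilon^{2n}$ prefactor beats the $\epsilon^{-2}$ generated by each free summation.

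The main obstacle is precisely this higher-cumulant estimate: one must show that the marginal interaction does not generate logarithmic (let alone power-law) enhancements in the \emph{gradient}, i.e.\ \emph{height}, sector, consistently with the fact that the emergent field theory is a GFF and that the anomalous exponent $\nu$ dresses only the oscillating (``charged'') correlations. Concretely, this is guaranteed by the chiral structure of the reference continuum model of \cite{BFM09a,BFM09b,BM02,BM04,BM05,BM10,BM11} and by the lattice Ward identities, exactly as in \cite{GMTaihp,GMT20}, now applied after the massive components of the Grassmann field have been integrated out. A subsidiary point is the treatment of the oscillating contributions $\propto e^{2ip^\omega\cdot(x-x')}$ and of their $n$-point analogues: smeared against the slowly varying $f(\epsilon\,\cdot)$ they are suppressed by a discrete summation-by-parts/stationary-phase argument, and the remainders $\mathrm{Err}$ sum to a negligible contribution.
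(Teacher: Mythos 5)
Your overall strategy (convergence of the variance plus vanishing of all cumulants of order $n\ge 3$, then the method of moments) coincides with the paper's, but both of your key steps contain genuine gaps. The more serious one is in the higher cumulants, which you yourself call the core. Your dimensional heuristic is incorrect: if the $n$-point height cumulant $\kappa_n\big(h(\eta_{x_1}),\dots,h(\eta_{x_n})\big)$ were merely \emph{bounded} uniformly in the separations, then $\kappa_n(h^\epsilon(f))=\epsilon^{2n}\sum_{x_1,\dots,x_n}\prod_i f(\epsilon x_i)\,\kappa_n(\cdots)$ would be $O(\epsilon^{2n}\cdot \epsilon^{-2n})=O(1)$: the prefactor $\epsilon^{2n}$ exactly balances the $n$ free summations, it does not ``beat'' them. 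What is needed is power-law \emph{decay} in the minimal separation, i.e.\ \eqref{higher}, and this decay is invisible at the dimensional level: expanding the heights along paths as in \eqref{uffa}, the truncated $n$-point dimer correlation decays like $d^{-n(1+O(\lambda))}$, and the $n$-fold sum over paths of length $R$ of such a quantity is $O(1)$ at best, and in fact potentially divergent since the $O(\lambda)$ correction to the exponent can have either sign --- the paper states exactly this after \eqref{uffa}. The decay comes from an exact cancellation that your proposal never identifies: in the decomposition \eqref{epperro} of the multi-point dimer correlation via the infrared reference model, the pure density contribution $S^{(1,\dots,1)}_{R;\omega_1,\dots,\omega_n}$ vanishes \emph{identically} for $n>2$ (the bosonization identity \eqref{questa}), so that only oscillating terms and faster-decaying remainders survive the summation. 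Your appeal to ``the chiral structure of the reference model and the lattice Ward identities'' preventing ``logarithmic enhancements'' misstates the mechanism: absence of enhancements would at best confirm the dimensional bound, which is insufficient; what must be shown is that the marginal, non-oscillating part of the $n$-point correlation is exactly zero for $n>2$.

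The variance step also fails as written. After polarization you reduce to $\mathrm{Var}\big(h(\eta_x)-h(\eta_y)\big)$ and claim the error of Theorem \ref{mainthrm} is $O(|x-y|^{-1/2})$, hence negligible. But in that specialization ($x^{(1)}=x^{(3)}=x$, $x^{(2)}=x^{(4)}=y$) the minimum pairwise distance entering the error bound is zero, so the error is only $O(1)$ --- exactly as in the display following Theorem \ref{mainthrm}. An $O(1)$ error weighted by $\epsilon^4\sum_{x,y}|f(\epsilon x)f(\epsilon y)|=O(1)$ contributes an $O(1)$ uncertainty that does not vanish as $\epsilon\to0$, so the limit $\sigma^2(f)$ is not pinned down by your argument. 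To repair it one must keep all four points separated: e.g.\ apply the four-point formula with reference faces at distance of order $\epsilon^{-1}$ from the support of $f(\epsilon\,\cdot)$ and from each other, or, as the paper does (following the proof of (7.26) in \cite{GMTaihp}), compute $\mathbb E_\lambda(h^\epsilon(f);h^\epsilon(f))$ directly by summing the dimer-dimer asymptotics of Theorem \ref{thm:1} together with Proposition \ref{th:3} along paths, where the error terms genuinely decay in the edge separations and are summable to $o(1)$.
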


\section{Grassmann representation of the generating function}\label{sec:3}

In this section we rewrite the partition function $Z_{L,\lambda}$ of
\eqref{eq:ZLl} in terms of Grassmann integrals (see Sect.\ref{sec:3.3}). As a byproduct of our construction, we obtain a similar Grassmann representation 
for the generating function of correlations of the dimer model. We also observe that the Grassmann integral for the generating function is invariant under 
a lattice gauge symmetry, whose origin has to be traced back to the local conservation of the number of incident dimers at lattice sites, and which 
implies exact lattice Ward Identities for the dimer correlations (see Sect.\ref{sec:3.4}). 

Before diving into the proof of the Grassmann representation, it is convenient to recall some preliminaries 
about the planar dimer model, its Gaussian Grassmann representation and the structure of its correlation functions  in the thermodynamic limit. This will be done in the next two subsection, Sect.\ref{sec:KastTheory}, \ref{sec:KG0}

\subsection{A brief reminder of Kasteleyn theory}
\label{sec:KastTheory}

Here we recall a few basic facts of Kasteleyn theory for the dimer
model on a bipartite graph $G=(V,E)$ embedded on the torus, with edge
weights $\{t_e>0\}_{e\in E}$. For later purposes, we need this for
more general such graphs than just $G^0_L$.  For details we refer
to \cite{Galluccio}, which considers the more general case where the
graph is not bipartite and it is embedded on an orientable surface of
genus $g\ge 1$. For the considerations of this section, we do not need the  edge weights to display any periodicity, so here we will work 
with generic, not necessarily periodic, edge weights. 

As in \cite{Galluccio}, we assume that $G$ can be represented as a
planar connected graph $G_0=(V,E_0)$ (we call this the ``basis graph of $G$''), embedded on a square, with
additional edges that connect the two vertical sides of the square
(edges $E_1$) or the two horizontal sides (edges $E_2$). Note that
$E=E_0\cup E_1\cup E_2$. See Figure \ref{fig:6}.  We always assume that the basis graph $G_0$ is
connected and actually\footnote{\cite{Galluccio} develops Kasteleyn's
  theory without assuming that $G_0$ is $2-$connected. We will avoid
  below having to deal with non-$2$-connected graphs, which would
  entail several useless complications} that it is $2-$connected (i.e. removal
of any single vertex together with the edges attached to it does not
make $G_0$ disconnected). We also assume that $G_0$ admits at least one perfect matching and we fix a reference one, which we call $M_0$.

Following the terminology of \cite{Galluccio}, we introduce the following definition.
\begin{definition}[Basic orientation]
  \label{def:bo}
We call an orientation
$D_0$ of the edges $E_0$ a ``basic orientation of $G_0$'' if all the
internal faces of the basis graph $G_0$ are clock-wise odd, i.e. if running clockwise
along the boundary of the face, the number of co-oriented edges is odd
(since $G_0$ is 2-connected, the boundary of each face is a
cycle).  
\end{definition}
 A basic orientation always exists \cite{Galluccio}, but 
in general it is not unique. Next, one defines $4$ orientations of the full
graph $G$ as follows (these are called ``relevant orientations'' in
\cite{Galluccio}). First, one draws the planar graphs $G_j,j=1,2$
whose edge sets are $E_0\cup E_j$, as in Fig. \ref{fig:6}.
\begin{figure}[H]
\centering
\includegraphics[scale=.7]{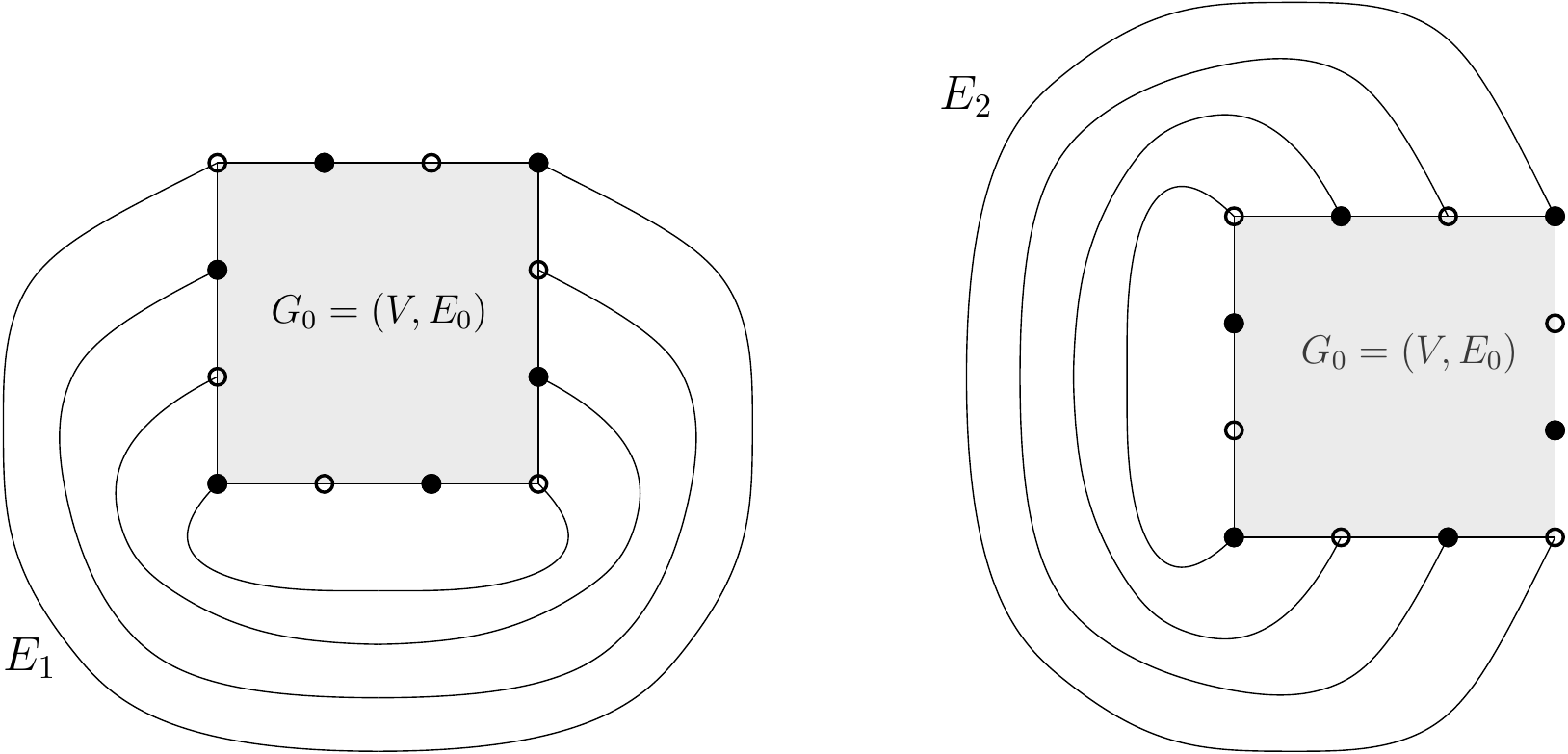}
\caption{The basis graph $G_0$ is schematically represented by the gray square (the vertices and edges inside the square are not shown). In the left (resp. right) drawing is pictured the planar graph $G_1$ (resp. $G_2$)}\label{fig:6}
\end{figure}

Note that there is a
unique orientation $D_j$ of the edges in $E_0\cup E_j$ that coincides with $D_0$ on $E_0$ and such that all the
internal faces of $G_j$ are clockwise odd. Then, we define the
relevant orientation $D_\th$ of type $\th=(\th_1,\th_2)\in \{-1,+1\}^2$ of $G$
as the unique orientation of the edges $E$ that coincides with $D_0$
on $G_0$ and with $\th_1 D_j$ on the edges in $E_j, j=1,2$.  Given one
of the four relevant orientations $D_\th$ of $G$, we define a
$|V|\times |V|$ antisymmetric matrix $A_{D_\th}$ by establishing that
for $v,v'\in V$, $A_{D_\th}(v,v')=0$ if $(v,v')\not\in E$, while
$A_{D_\theta}(v,v')=t_e$ if $v,v'$ are the endpoint of the edge $e$ oriented
from $v $ to $v'$, and $A_{D_\th}(v,v')=-t_e$ if $e$ is oriented from
$v'$ to $v$.  Then, \cite[Corollary 3.5]{Galluccio} says that
\begin{eqnarray}
  \label{eq:ZlibPf}
  Z_G=\sum_{M\in \Omega_G}w(M)= \sum_{\th \in \{-1,+1\}^2}\frac{c_\th}2\frac{{\rm Pf}(A_{D_\th})}{s(M_0)}
\end{eqnarray}
where 
 $\Omega_G$ is the set of the perfect matchings of $G$, $w(M)=\prod_{e\in M}t_e$, and
\begin{eqnarray}
  \label{eq:c}
c_{(-1,-1)}=-1 \text{ and } c_\th=1 \text{ otherwise.}
\end{eqnarray}
In \eqref{eq:ZlibPf}, ${\rm Pf}(A)$ denotes the Pfaffian of an anti-symmetric matrix $A$ and $s(M_0)$ denotes the sign of the term corresponding to the reference matching $M_0$ in the 
expansion of the Pfaffian ${\rm Pf}(A_{D_\th})$. Since by assumption $M_0$ contains only edges from $E_0$ whose orientation does not depend on $\theta$, $s(M_0)$ is indeed independent of $\theta$.

In our case, in contrast with the general case considered in
\cite{Galluccio}, the graph $G$ is bipartite. By labeling the vertices
so that the first $|V|/2$ are black and the last $|V|/2$ are white,
the matrices $A_{D_\th}(v,v')$ have then a block structure of the type
\begin{eqnarray}
  \label{eq:block}
A_{D_\th}= \begin{pmatrix}
0  & \rvline & +K_\th \\
\hline
  -K_\th & \rvline &
  0
\end{pmatrix}
\end{eqnarray}
We view the $|V|/2\times |V|/2$ ``Kasteleyn matrices'' $K_\th$ as having rows indexed by black vertices and columns by white vertices.

By using the relation \cite[Eq. (16)]{Haber} between Pfaffians and determinants, one can then rewrite the above formula as
\begin{eqnarray}
  \label{eq:Zlib}
  Z_G= \sum_{\th \in \{-1,+1\}^2} 
  \frac{\tilde c_\th}2\det(K_\th), \quad \tilde c_\th=c_\th\frac{(-1)^{(|V|/2-1)|V|/4}}{s(M_0)}.
\end{eqnarray}
\begin{remark}
  \label{rem:sceltasegno}
Note that changing the order in the labeling of the vertices changes the sign $s(M_0)$. We suppose henceforth that the choice is done so that the ratio in the definition of $\tilde c_\th$ equals $1$, so that $\tilde c_\th=c_\th$.  
\end{remark}

\subsection{Thermodynamic limit of the planar dimer model}
\label{sec:KG0}

In the previous section, Kasteleyn's theory for rather general
toroidal bipartite graphs was recalled, without assuming any type of
translation invariance.  In this subsection, instead, we specialize to
$G=G^0_L$ (the periodized version of $\mathbb Z^2$ introduced in
Section \ref{sec:wnpdm}) and, as was the case there, we assume that
the edge weights are invariant under translations by multiples of
${\bf e_1,\bf e_2}$.

With Kasteleyn's theory at hand, one can compute
the thermodynamic and large-scale properties of the dimer model on
$G^0_L$ as $L\to\infty$. We refer to \cite{KOS,Kenyon_notes,Gorin}
for details.  In the case where $G=G^0_L$, the basis graph $G_0$ is a
square grid with $Lm$ vertices per side and we choose its basic
orientation $D_0$ so that horizontal edges are oriented from left to
right, while vertical edges are oriented from bottom to top on every
second column and from top to bottom on the remaining columns. With
this choice, the orientations $D_1,D_2$ of $G_1,G_2$ are like in
Fig.  \ref{fig:7}.  Note that, if $e=(b,w)\in E_L^0$ is an edge of
$G_L^0$, then for $\th=(\th_1,\th_2)\in\{-1,+1\}^2$, $K_{\th}(b,w)$ equals
$K_{(+1,+1)}(b,w)$ multiplied by $(-1)^{(\th_1-1)/2}$ if $e$ belongs to
$E_1$ (see Fig. \ref{fig:6}) and by $(-1)^{(\th_2-1)/2}$ if $e$ belongs to
$E_2$.
\begin{figure}[H]
\centering
\includegraphics[scale=.6]{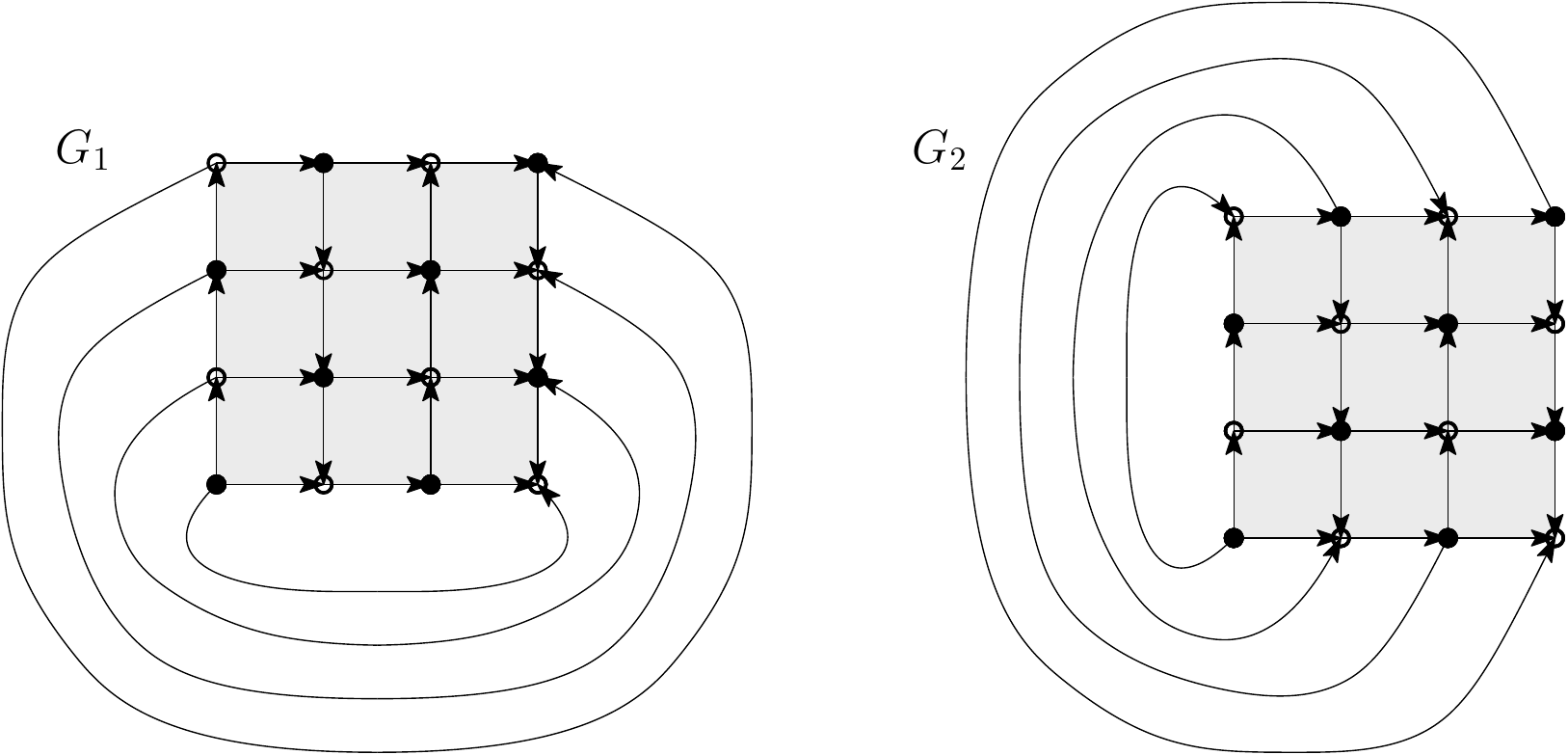}
\caption{The graphs $G_1,G_2$ corresponding to the basis graph of $G_L^0$ (for $L\,m=4$), together with their orientations $D_1,D_2$.}\label{fig:7}
\end{figure}
Observe also that the matrix $K_{(-1,-1)}$ is invariant under translations by multiples of ${\bf e_1,e_2}$. Define
\begin{eqnarray}
  \label{eq:Dr}
  \mathcal{P}(\th):=\left\{k=(k_1,k_2): k_j=\frac{2\pi}L\left(n_j+\frac{\th_j+1}4\right), -L/2<n_j\le L/2\right\}.
\end{eqnarray}
Let $P_\th$ be the orthogonal $(Lm)^2/2\times (Lm)^2/2$ matrix whose columns are indexed by $(k,\ell), k\in \mathcal{P}(\th), \ell\in \mathcal I=\{1,\dots,m^2/2\}$, whose rows are indexed by $(x,\ell), x\in \Lambda, \ell\in \mathcal I$, and such that  the column indexed $(k,\ell)$ is the vector
\begin{eqnarray}
  \label{eq:fka}
  f_{\ell,k}:((x,\ell')\in\Lambda\times \mathcal I)\mapsto f_{\ell,k}(x,\ell')=\frac1L e^{-i k x}{\bf 1}_{\ell'=\ell}.
\end{eqnarray}
Then, 
$P_\th^{-1}K_\th P_\th$ is block-diagonal with blocks of size $|\mathcal I|$ labelled by $k\in \mathcal{P}(\th)$.
The block corresponding to the value $k$ is a $|\mathcal I|\times |\mathcal I|$ matrix
$M(k)$ of elements $[M(k)]_{\ell,\ell'}$ with $\ell,\ell'\in \mathcal I$ and 
  \begin{eqnarray}
    \label{eq:Mni} [M(k)]_{\ell,\ell'}=\sum_{e:\ell\stackrel e\sim \ell'} K_{(-1,-1)}(b,w) e^{-i k
x_{e}}.
  \end{eqnarray} In this formula, the sum runs over all edges $e$ joining
the black vertex $b$ of type $\ell$ in the cell of
coordinates $x=(0,0)$ to some white vertex $w$ of type $\ell'$ ($w$ can be
either in the same fundamental cell or in another one); 
$x_{e}\in \mathbb Z^2$ is the coordinate of the cell to
which $w$ belongs.

The thermodynamic and large-scale properties of the measure
$\mathbb P_{L,0}$ are encoded in the matrix $M$: for instance the
infinite volume free energy exists  and it is given
by \cite{KOS}
\begin{align} F=:\lim_{L \to \infty}\frac1{L^2}\log
  Z_{L,0}=\frac{1}{(2\pi)^2} \int_{[-\pi,\pi]^2} \log | \mu(k) | dk
\end{align} where $\mu$ (the ``characteristic polynomial'') is
\begin{eqnarray}
  \label{eq:mu}
\mu(k):=\det M(k),  
\end{eqnarray}
which is a polynomial in $e^{i k_1},e^{i k_2}$. Kasteleyn's theory allows one to write
multi-point dimer correlations (in the $L\to\infty$ limit) in terms of  the so-called ``infinite-volume inverse Kasteleyn matrix'' $K^{-1}$: if $w$ (resp. $b$) is a white (resp. black) vertex of type $\ell $ in cell $x=(x_1,x_2)\in \mathbb Z^2$ (resp. of type $\ell'$ and in cell $0$), then one has
\begin{eqnarray}
  \label{eq:invKast}
  K^{-1}(w,b):=\frac1{(2\pi)^2}\int_{[-\pi,\pi]^2} 
  [(M(k))^{-1}]_{\ell,\ell'}e^{-i k x}  dk.
\end{eqnarray}
As can be guessed from \eqref{eq:invKast}, the long-distance
behavior of $K^{-1}$ is related to the zeros of the determinant of $M(k)$, that is, to the zeros of $ \mu$ on
$[-\pi,\pi]^2$.  It is a well known fact \cite{KOS} that, for any choice
of the edge weights, $\mu$ can have at most two zeros.  Our Assumption
\ref{ass:1} means that we restrict to a choice of edge weights such
that $\mu$ has exactly two  zeros, named $p_0^+,p_0^-$, {with $p_0^+\ne p^-_0\mod(2\pi,2\pi)$}.  We also define the complex numbers
\begin{equation}
  \label{eq:alphabeta}
  \alpha^0_\omega:=\partial_{k_1}\mu(p_0^\omega), \quad   \beta^0_\omega:=\partial_{k_2}\mu(p_0^\omega), \quad \omega=\pm.
\end{equation}
Note that,
since the Kasteleyn matrix elements $K_\th(b,w)$ are real\footnote{In \cite{GMT20,GMTHaldane} etc, a different choice of Kasteleyn matrix was done, with complex entries. As a consequence, in that case one had $p_0^++p_0^-=(\pi,\pi)$ instead.}, from  \eqref{eq:Mni} we have
the symmetry \begin{equation}\label{eq:symm0} [M(-k)]_{\ell,\ell'}=\overline{[M(k)]_{\ell\ell'}}
             \end{equation}
             and in particular
\begin{align}\label{fermippvel}
& p_0^++p_0^-=0  \\
& \alpha^0_-=-\overline{\alpha^0_+}, \quad \beta^0_-=-\overline{\beta^0_+}.
\label{fermippvel2}\end{align}
It is also known \cite{KOS} that $\alpha^0_\omega,\beta^0_\omega $ are not collinear as elements of  the complex plane:
\begin{eqnarray}
  \label{eq:alphabeta2}
  \alpha^0_\omega/\beta^0_\omega\not\in \mathbb R.
\end{eqnarray}
Note that from \eqref{fermippvel2} it follows that $\text{Im}(\beta_+^0/\alpha_+^0)=-\text{Im}(\beta_-^0/\alpha_-^0)$. From now on, with no loss of generality, we assume that 
\begin{equation} \label{signab} \text{Im}(\beta_+^0/\alpha_+^0)>0, \end{equation}
which amounts to choosing appropriately the labels $+,-$ associated with the two zeros of $\mu(k)$.

If we denote by $\adj(A)$ the adjugate of the matrix $A$, so that $A^{-1}=\adj(A)/\det A$, 
the long-distance behavior of the inverse Kasteleyn matrix is given \cite{KOS} as
\begin{eqnarray}
  \label{eq:KinvAsympt}
   K^{-1}(w,b)\stackrel{|x|\to\infty}=\frac1{2\pi}\sum_{\omega=\pm}[\adj (M(p^\omega))]_{\ell,\ell'}\frac{e^{-i p_0^\omega x}}{\phi^0_\omega(x)}+O(|x|^{-2})
\end{eqnarray}
where
\begin{eqnarray}
  \label{eq:phiomega}
  \phi^0_\omega(x)=\omega(\beta^0_\omega x_1-\alpha^0_\omega x_2).
\end{eqnarray}
Note that since the zeros $p_0^\omega$ of $\mu(k)$ are simple, the matrix $\adj M(p_0^\omega)$ has rank $1$. This means that we can write
\begin{eqnarray}\label{krondecM}
\adj M(p_0^\omega)=U^\omega \otimes V^\omega
\end{eqnarray}
for vectors $U^\omega,V^\omega \in \mathbb{C}^{|\mathcal{I}|}$, where
$\otimes$ is the Kronecker product.  Let $e=(b,w),e'=(b',w')$ be two
fixed edges of $G^0_L$: we assume that the black endpoint of $e$
(resp. of $e'$) has coordinates ${\bf x}=(x,\ell)$ (resp.
${\bf x'}=(x',\ell')$) and that the white endpoint of $e$ (resp. $e'$)
has coordinates $(x+v(e),m)$ with $m\in \mathcal I$ (resp. coordinates
$(x'+v(e'),m')$). Of course, $v(e)$ is either $(0,0)$ or $(0,\pm1)$ or
$(\pm1,0)$, and similarly for $v(e')$. Note that the coordinates of
the white endpoint of $e$ are uniquely determined by the coordinates of the
black endpoint and the orientation label\footnote{recall the conventions on labeling the type of edges, in Section \ref{sec:wnpdm}.}  $j \in \{1,\dots,4\}$ of $e$: in
this case we will write $v(e)=:v_{j,\ell}$, $K(b,w)=:K_{j,\ell}$ and
in \eqref{krondecM}, $U_m=:U_{j,\ell}$.  The
(infinite-volume) truncated dimer-dimer correlation under the measure
$\mathbb P_{L,0}$ is given as\footnote{the index $\th\in\{-1,+1\}^2$ in
  $K_\th(b,w)$ is dropped, since the dependence on $r$ is present only
  for edges at the boundary of the basis graph $G_0$ (see Figure
  \ref{fig:7}, so that for fixed $(b,w)$ and $L$ large, $K_\th(b,w)$ is
  independent of $r$}
\begin{eqnarray}
  \label{eq:dimdim0}
\mathbb E_{0}(\openone_{e};\openone_{e'})=\lim_{L\to\infty}  \mathbb E_{L,0}(\openone_{e};\openone_{e'})=-K(b,w)K(b',w')K^{-1}(w',b)K^{-1}(w,b').
\end{eqnarray}

As a consequence of the asymptotic expression \eqref{eq:KinvAsympt}, we have that as $|x'-x| \to \infty$,
\begin{align}\label{asymcorrfunc}
\mathbb{E}_0[\openone_e;\openone_{e'}]=A_{j,\ell,j',\ell'}(x,x')+B_{j,\ell,j',\ell'}(x,x')+R^0_{j,\ell,j',\ell'}(x,x')
\end{align}
with
\begin{equation}
\begin{split} A_{j,\ell,j',\ell'}(x,x')&= \sum_{\omega=\pm} \frac{ K^0_{\omega,j,\ell}  K^0_{\omega,j',\ell'}}{ (\phi^0_\omega(x-x'))^2} \\
B_{j,\ell,j',\ell'}(x,x')&= \sum_{\omega=\pm} \frac{H^0_{\omega,j,\ell}H^0_{-\omega,j',\ell'}}{|\phi^0_\omega(x-x')|^2}e^{2ip_0^\omega (x-x')}  \\
|R^0_{j,\ell,j'\ell'}(x,x')|&\leq C |x-x'|^{-3}.\end{split}\label{asym0}
\end{equation}
where
\begin{equation}\begin{split}
& K^0_{\omega,j,\ell}:=\frac1{2\pi}K_{j,\ell}e^{-ip_0^\omega v_{j,\ell}}
U^\omega_{j,\ell}V^\omega_\ell \\ 
& H^0_{\omega,j,\ell}:=\frac1{2\pi}K_{j,\ell}
e^{ip_0^\omega v_{j,\ell}} U^{-\omega}_{j,\ell}V^{\omega}_{\ell}.
\end{split}\end{equation}

\subsection{A fermionic representation for $Z_{L,\lambda}$}\label{sec:3.3}
In this subsection, we work again with generic edge weights, i.e., we do not assume that they have any spatial periodicity.

\subsubsection{Determinants and Grassmann integrals}

We refer for instance to \cite{GM01} for an introduction to Grassmann
variables and Grassmann integration; here we just recall a few basic
facts.  To each vertex $v$ of $G_L$ we associate a Grassmann
variable. Recall that vertices are distinguished by their color and by coordinates
${\bf x}=(x,\ell)\in{\bf \Lambda}= \Lambda\times \mathcal I$. We
denote the Grassmann variable of the black (resp. white) vertex of
coordinate ${\bf x}$ as $\psi^+_{\bf x}$ (resp.  $\psi^-_{\bf x}$).
We denote by $\int D\psi f(\psi)$ the Grassmann integral of a function
$f$ and since the variables $\psi^\pm_{\bf x}$ anti-commute among
themselves and there is a finite number of them, we need to define the
integral only for polynomials $f$. The Grassmann integration is a
linear operation that is fully defined by the following conventions:
\begin{equation}
  \label{eq:Dpsi}
\int D\psi\,
\prod_{{\bf x}\in{\bf \Lambda}}\psi^-_{\bf x}\psi^+_{\bf x}=1  ,
\end{equation}
the sign of the integral changes whenever the positions of two
variables are interchanged (in particular, the integral of a monomial
where a variable appears twice is zero) and the integral is zero if
any of the $2|{\bf \Lambda}|$ variables is missing. We also consider
Grassmann integrals of functions of the type $f(\psi)=\exp(Q(\psi))$,
with $Q$ a sum of monomials of even degree. By this, we simply mean
that one replaces the exponential by its finite Taylor series
containing only the terms where no Grassmann variable is repeated.

For the partition function $Z_{L,0}=Z_{G_L^0}$ of the dimer model on
$G_L^0$ we have formula \eqref{eq:Zlib} of previous subsection where the
Kasteleyn matrices $K_\th$ are fixed as in Section \ref{sec:KG0}, recall also Remark \ref{rem:sceltasegno}.  Using the
standard rewriting of determinants as Gaussian Grassmann integrals (i.e. Grassmann integrals where the integrand is the exponential of the corresponding quadratic form),
one immediately obtains
  \begin{equation}
    \label{eq:ZlibGrass}
  Z_{L,0}=\frac{1}{2}\sum_{\th  \in\{-1,+1\}^2}c_\th\int D\psi\, e^{-\psi^+K _\th  \psi^-}.
\end{equation}

\subsubsection{The partition function as a non-Gaussian, Grassmann integral}

The reason why the r.h.s. of \eqref{eq:Zlib} is the sum of four
determinants (and $Z_{L,0}$ is the sum of four Gaussian Grassmann
integrals) is that $G_L^0$ is embedded on the torus, which has genus
$1$: for a dimer model embedded on a surface of genus $g$, the
analogous formula would involve the sum of $4^g$ such determinants
\cite{Galluccio,Tesler}. This is clearly problematic for the graph
$G_L$ with non-planar edges, since in general it can be embedded only
on surfaces of genus $g$ of order $L^2$ (i.e. of the order of the
number of non-planar edges) and the resulting formula would be practically useless
for the analysis of the thermodynamic limit.  Our first crucial result
is that, even when the weights of the non-planar edges $N_L$ are
non-zero, the partition function can again be written as the sum of just four
Grassmann integrals, but these are non Gaussian (that is, the
integrand is the exponential is a polynomial of order higher than
$2$).
To emphasize that the following identity holds for generic edge weights, we will write $Z_{L,\underline t}$ for the partition function. 

\begin{proposition}\label{prop:1}
One has the identity
\begin{equation}\label{eq:ZGrass}
  Z_{L,\underline t}=\sum_{M\in \Omega_L}\prod_{e\in M}t_e=\frac{1}{2}\sum_{\th \in \{-1,+1\}^2} c_\th \int D\psi e^{-\psi^+ K_{\th} \psi^-+V_{\underline t}(\psi)}\end{equation}
where $c_\th$ are given in \eqref{eq:c},
$\Lambda=(-L/2,L/2]^2\cap\mathbb Z^2$ as above,
\begin{equation}
  \label{eq:Vt}
  V_{\underline t}(\psi)=\sum_{x \in \Lambda} V^{(x)}(\psi|_{B_x})
\end{equation}
and $V^{(x)}$ is a polynomial with coefficients depending on the weights of the edges incident to the cell $B_x$, $\psi|_{B_x}$ denotes the collection of the variables $\psi^\pm$
associated with the vertices of  cell  $B_x$ (as a consequence,
the order of the polynomial is at most $m^2$). When the edge weights
$\{t_e\}$ are invariant by translations by ${\bf e_1,e_2}$, then $V^{(x)}$
is  independent of $x$.
\end{proposition}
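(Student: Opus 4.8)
The plan is to start from the Kasteleyn representation \eqref{eq:ZlibGrass} of the purely planar partition function $Z_{L,0}$ and to introduce the non-planar edges as a \emph{cell-local} perturbation, exploiting that $N_L$ never connects different cells. Concretely, I would decompose every perfect matching $M\in\Omega_L$ into its planar and non-planar parts, $M=M^0\sqcup M^N$ with $M^0\subseteq E^0_L$ and $M^N\subseteq N_L$, denote by $b(e),w(e)$ the black and white endpoints of an edge $e$ and by $V(M^N)$ the set of vertices covered by $M^N$, and sum over $M^N$ first:
\begin{equation*}
Z_{L,\underline t}=\sum_{M^N}\Big(\prod_{e\in M^N}t_e\Big)\,Z^0(M^N),\qquad Z^0(M^N):=\sum_{\substack{M^0\text{ matching of}\\ G^0_L[V\setminus V(M^N)]}}\ \prod_{e\in M^0}t_e .
\end{equation*}
Since non-planar edges stay within a single cell, $M^N$ factorizes as a disjoint union $\bigsqcup_x M^N_x$ over cells and $V(M^N)$ consists only of cell-internal vertices.

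The core of the argument is to express each $Z^0(M^N)$ through the \emph{same} four Grassmann integrals appearing in \eqref{eq:ZlibGrass}. The graph $G^0_L[V\setminus V(M^N)]$ is still embedded on the torus, so I would invoke Kasteleyn's four-orientation formula \eqref{eq:Zlib} for it, using the orientations $D_\th$ already fixed for $G^0_L$. On the Grassmann side this is the purely algebraic fact that a Pfaffian (equivalently determinant) minor is generated by inserting the monomials attached to the deleted vertices, i.e.
\begin{equation*}
Z^0(M^N)=\tfrac12\sum_{\th}c_\th\int D\psi\ \Big(\prod_{e\in M^N}\psi^+_{b(e)}\psi^-_{w(e)}\Big)\,e^{-\psi^+K_\th\psi^-},
\end{equation*}
up to a sign independent of $\th$. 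Inserting this into the sum over $M^N$ and using that the inserted monomials involve only the variables $\psi|_{B_x}$ of the relevant cell, the sum factorizes over cells and gives
\begin{equation*}
Z_{L,\underline t}=\tfrac12\sum_\th c_\th\int D\psi\ e^{-\psi^+K_\th\psi^-}\prod_{x\in\Lambda}P^{(x)}(\psi|_{B_x}),
\end{equation*}
where $P^{(x)}$ is a polynomial in the $m^2$ Grassmann variables of $B_x$ collecting the weighted, correctly-signed local non-planar matchings. The coefficients of $P^{(x)}$ do not depend on $\th$ because the four orientations $D_\th$ differ only on the edges $E_1\cup E_2$ crossing the boundary of the fundamental domain, which lie outside the cells $B_x$. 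Finally, since each $P^{(x)}$ has constant term $1$ (the empty non-planar configuration) and all higher terms are nilpotent, I would set $V^{(x)}:=\log P^{(x)}$, a well-defined polynomial of degree at most $m^2$, so that $\prod_x P^{(x)}=e^{\sum_x V^{(x)}}=e^{V_{\underline t}}$, which is exactly \eqref{eq:ZGrass} with $V_{\underline t}$ as in \eqref{eq:Vt}. Translation invariance of $V^{(x)}$ under ${\bf e_1,e_2}$ in the periodic case is immediate from the translation-covariance of the construction of $P^{(x)}$.

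I expect the main obstacle to be the validity of Kasteleyn's four-orientation formula for the vertex-deleted subgraphs $G^0_L[V\setminus V(M^N)]$ with the \emph{unmodified} orientations $D_\th$: deleting cell-internal vertices merges the adjacent faces, so one must verify that the restriction of $D_0$ remains a basic orientation (clockwise-odd on the merged faces) in the sense of Definition \ref{def:bo}, and keep careful track both of the reference-matching sign $s(M_0)$ entering \eqref{eq:Zlib} and of the $\th$-independent fermionic reordering signs produced by anticommuting several inserted monomials past each other. These signs do \emph{not} factorize over the individual edges of $M^N$, which is precisely the mechanism generating the genuinely higher-order (beyond quadratic) terms of $V^{(x)}$ and accounting for its degree growing up to $m^2$. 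A secondary, more bookkeeping task is to check that the $\th$-dependence truly drops at the level of $P^{(x)}$, for which the separation of $E_1\cup E_2$ from the cells — guaranteed by the width-one corridors — is the key geometric input.
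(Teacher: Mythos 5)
The obstacle you flag at the end is not a bookkeeping issue to be ``verified'': it is a genuine obstruction, and it kills the central identity of your proposal. Your formula for $Z^0(M^N)$ expresses a \emph{monomer-type} partition function (matchings of the vertex-deleted graph $G^0_L[V\setminus V(M^N)]$) as a Grassmann minor of the unmodified Kasteleyn matrices $K_\th$, and this is false in general. Indeed, deleting the two endpoints of a non-planar edge (which are typically non-adjacent in $\mathbb Z^2$) creates two holes; the boundary of the merged face around each hole is a cycle that, in the original graph, encloses exactly one vertex. By Kasteleyn's parity lemma for basic orientations (clockwise parity of a cycle $\equiv$ number of enclosed vertices $+1$ mod $2$), such a cycle is clockwise-\emph{even}, so the restriction of $D_0$ violates Definition \ref{def:bo} on the merged faces. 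Concretely, two matchings of the deleted graph whose symmetric difference is an alternating cycle winding around one hole enclose an odd number of original vertices, hence enter each $\det K_\th[V\setminus V(M^N)]$ with \emph{opposite} signs and partially cancel; and since such cycles are contractible, the combination $\frac12\sum_\th c_\th$ cannot repair this (the $\th$-sum only fixes signs attached to the torus homology). This is exactly the classical fact that monomer correlations of the dimer model are not Pfaffian minors. A symptom that something is off appears already in your claimed sign mechanism: the inserted monomials $\psi^+_{b(e)}\psi^-_{w(e)}$ have even Grassmann degree, so they commute and produce \emph{no} reordering signs at all; your proposed source of the higher-degree terms of $V^{(x)}$ is vacuous.

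The paper's proof is built precisely to circumvent this. It fixes a reference matching $M_0$, draws the non-planar edges so as not to cross $M_0$, and decomposes not only over the set $J$ of occupied non-planar edges but also over the set $S\subseteq P_J$ of occupied \emph{planar edges crossed by} $J$. Then, instead of deleting vertices, it removes the edges $N_L\cup P_J$ and adds new pairing edges $E_{J,S}$ inside the merged faces, obtaining a $2$-connected toroidal graph $G_{J,S}$ on the full vertex set; Lemma \ref{lem:KJS} shows a basic orientation of $G_{J,S}$ can be chosen extending that of $G^0_L$, so \eqref{eq:Zlib} applies legitimately, and the configurations of interest are extracted by $\prod_{e\in E_{J,S}}t_e\partial_{t_e}$, i.e.\ by inserting the monomials of the \emph{added} edges, not of $J\cup S$. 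The resulting cell-local signs $\epsilon^J_S$ implement an inclusion--exclusion over the crossed planar edges, and this is visible in the final answer: as Appendix \ref{app:B} shows, $V^{(x)}$ necessarily contains mixed terms such as $-\psi(e_\lambda)\psi(e_1)$ coupling the non-planar edge to the planar edges it crosses, terms which are structurally absent from your $P^{(x)}$ (a polynomial in the non-planar edge monomials only). So your final formula is not merely derived with a gap; it is a different, incorrect potential.
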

The form of the polynomial $V^{(x)}$ is  given in formula
\eqref{eq:Vx} below; the expression in the r.h.s. can be computed easily when
either the cell size $m$ is small, or each cell contains a small number of
non-planar edges. For an explicit example, see Appendix \ref{app:B}.

\begin{proof}
  We need some notation. 
  If $(b,w)$ is a pair of black/white vertices  joined by the edge $e$ of weight $t_e$, let us set
  \begin{equation}
    \label{eq:psie}
    \psi_\th (e):=\left\{
    \begin{array}{lll}
      -t_e \psi^+_b\psi^-_w &\text{ if }& e\in N_L\\
       -K_\th(e) \psi^+_b\psi^-_w &\text{ if } &e\in E^0_L
    \end{array}
                                                 \right.
  \end{equation}
  with $K_\th(e)=K_\th(b,w)$ the Kasteleyn matrix element corresponding to
  the pair $(b,w)$, which are the endpoints of $e$.  We fix a reference dimer configuration
  $M_0\in \Omega^0_L$, say the one where all horizontal edges
  of every second column are occupied, see Fig. \ref{fig:2}.
 
Then, we draw
the non-planar edges on the two-dimensional torus on which $G^0_L$ is
embedded, in such a way that they do not intersect any edge in $M_0$
(the non-planar edges will in general intersect each other and will
intersect some edges in $E^0_L$ that are not in $M_0$). 
Given $ J\subset N_L$, we let $P_{J}$ be the set of edges in $E^0_L$
that are intersected by edges in $J$.  The drawing of the non-planar
edges can be done in such a way that resulting picture is still
invariant by translations of ${\bf e_1,e_2}$, the non-planar edges do
not exit the corresponding cell and the graph obtained by removing the
edges in $N_L\cup P_{N_L}$ (i.e. all the non-planar edges and the
planar edges crossed by them) is $2-$connected.  See Figure
\ref{fig:2}.

\begin{figure}[H]
\centering
\includegraphics[scale=1]{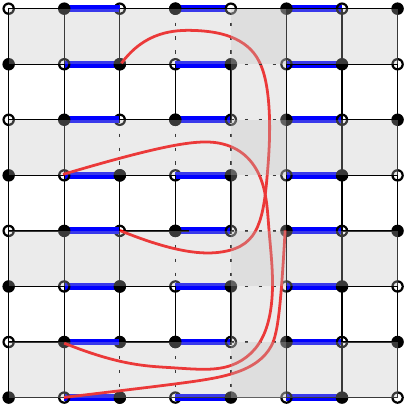}
\caption{A single cell $B_x$, with the reference configuration $M_0$
  (thick, blue edges). The non-planar edges (red) are drawn in a way that
  they do not intersect the edges of $M_0$ and do not exit the
  cell. Note that non-planar edges can cross each other. The dotted
  edges, crossed by the planar edges, belong to $P_{N_L}$. If the
  non-planar edges cross only horizontal
  edges in the same column (shaded) of the cell and vertical edges from every second row (shaded), the graph obtained by removing red edges and
  dotted edges is $2$-connected.}\label{fig:2}
\end{figure}

We start by rewriting 
\begin{equation}
  \label{eq:ZJS0}
  Z_{L,\underline t}=\sum_{J\subset N_L}\sum_{S\subset P_{J}}\sum_{M\in \Omega_{J,S}}w(M)
\end{equation}
where $\Omega_{J,S}$ is the set of dimer configurations $M$ such that a non-planar edge belongs to $M$ iff it belongs to $J$, and an edge in $P_J$ belongs to $M$ iff it belongs to $S$.
Given $M\in \Omega_{J,S}$, we write $M$ as the disjoint union $M=J\cup S\cup M'$ and, with obvious notations, $w(M)=w(M')w(S)w(J)$ so that \eqref{eq:ZJS} becomes
\begin{equation}
  \label{eq:ZJS}
  Z_{L,\underline t}=\sum_{J\subset N_L}w(J)\sum_{S\subset P_{J}}w(S)\sum_{M'\sim {J,S}}w(M')
\end{equation}
where $M'\sim S,J$ means that $M'\cup S\cup J$  is a dimer configuration in $\Omega_{J,S}$.
To proceed, we use the following
\begin{lemma}
  There exists $\epsilon_S^J=\pm 1$ such that  \begin{equation}
                                                 \label{eq:11}
    \sum_{M'\sim J,S}w(J)w(S)w(M')= \epsilon_S^J\sum_{\th \in\{-1,+1\}^2}\frac{c_\th}2\int D\psi\,
    e^{-\psi^+ K_\th \psi^-}\prod_{e\in J\cup S}\psi(e) .
  \end{equation}
  Here, $\psi(e), e\in J\cup S$ is the same as $\psi_\th(e)$: we have {removed the index $\theta$} because, 
since the endpoints $b,w$ of $e$ belong to the same cell, the right hand side of \eqref{eq:psie} is independent of $\theta$. If $J=S=\emptyset$, the product of $\psi(e)$ in the right hand side of \eqref{eq:11} should be interpreted as being equal to $1$.  
  Moreover, $\epsilon_{\emptyset}^{\emptyset}=1$ and, letting $J_x$ (resp. $S_x$) denote the collection of edges in $J$ (resp. $S$) belonging to the cell $B_x, x\in\Lambda$, one has
  \begin{equation}
    \label{eq:epsfact}
    \epsilon^J_S=\prod_{x\in \Lambda} \epsilon^{J_x}_{S_x}.
  \end{equation}
  \label{lem:1}
\end{lemma}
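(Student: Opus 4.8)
The plan is to expand the Grassmann integral on the right-hand side of \eqref{eq:11} and match it, configuration by configuration, with the constrained sum on the left, using the four-term Kasteleyn formula \eqref{eq:Zlib} as the only non-elementary input. Fix $J\subseteq N_L$ and $S\subseteq P_J$, and let $B=\{b_e:e\in J\cup S\}$ and $W=\{w_e:e\in J\cup S\}$ be the black and white endpoints of the forced edges. Each insertion $\psi(e)$ of \eqref{eq:psie} saturates the two variables $\psi^+_{b_e},\psi^-_{w_e}$ and factors out the scalar weight, so the integral reduces to the minor of $K_\th$ with rows $B$ and columns $W$ deleted: up to the scalar weights and a reordering sign, $\int D\psi\, e^{-\psi^+K_\th\psi^-}\prod_{e\in J\cup S}\psi(e)=\pm\,w(J)w(S)\,\det\!\big(K_\th(\widehat B;\widehat W)\big)$, with $K_\th(\widehat B;\widehat W)$ the Kasteleyn matrix of $G^0_L$ restricted to the vertices not covered by $J\cup S$. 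Summing over $\th$, the quantity $\sum_{\th}\tfrac{c_\th}2\det(K_\th(\widehat B;\widehat W))$ expands as $\sum_{M'}\mathrm{coeff}(M')\,w(M')$ over the perfect matchings $M'$ of $G^0_L$ covering exactly the complement of $B\cup W$, with each $\mathrm{coeff}(M')\in\{+1,-1\}$ fixed by the relevant orientations and the homology class of $M'$.

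To turn this into the left-hand side I would establish two facts. The first is an \emph{inertness} statement: the only matchings $M'$ that can occur are those avoiding $P_J$, i.e. no perfect matching covering the complement of $B\cup W$ uses an edge of $P_J\setminus S$. Indeed such an edge is straddled by a non-planar edge of $J$ whose two endpoints lie in $B\cup W$ and have been removed; since the non-planar edges are drawn disjointly from the reference matching $M_0$ and are confined to a single cell (see Fig.\ref{fig:2}), the crossed planar edge is forced out of every completion. The second is a \emph{uniform-sign} statement: $\mathrm{coeff}(M')$ does not depend on $M'$, so that it may be pulled out as the single sign $\epsilon^J_S$ of the claim. Heuristically this should hold because the removed set $B\cup W$ consists of the endpoints of the partial matching $J\cup S$, which is supported inside the cells, so the deletion perturbs the clockwise parities underlying $D_0$ only locally; but making this precise is part of the technical core below.

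The final step, and the one I expect to be the main obstacle, is to prove simultaneously the uniform-sign statement and the factorization $\epsilon^J_S=\prod_{x\in\Lambda}\epsilon^{J_x}_{S_x}$ of \eqref{eq:epsfact}. The sign $\epsilon^J_S$ gathers the reordering of the inserted monomials, the factors $\mathrm{sgn}\,K(e)$ for $e\in S$ that convert Kasteleyn weights into the edge weights $t_e$ of $w(S)$, and the orientation-reconciliation sign implicit in $\mathrm{coeff}(M')$. The key is that all three localize cell by cell, which is precisely what the width-one corridors, crossed by no non-planar edge, are designed to guarantee: because $J_x,S_x$ and all their crossings lie inside $B_x$, the only faces whose clockwise parity is disturbed by the deletions belong to $B_x$, so a basic orientation adapted to the deletions can be chosen to differ from $D_0$ only through sign flips supported in the individual cells. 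Both the $M'$-independence of $\mathrm{coeff}(M')$ and its product structure then follow, and the normalization $\epsilon^\emptyset_\emptyset=1$ is read off from \eqref{eq:ZlibGrass}.
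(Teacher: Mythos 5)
Your proposal breaks down at the ``inertness'' step, and unfortunately that step is the crux of the whole lemma. You claim that no perfect matching of $G^0_L$ covering the complement of $B\cup W$ can use an edge of $P_J\setminus S$, ``since the non-planar edges are drawn disjointly from $M_0$ and confined to a cell, the crossed planar edge is forced out of every completion.'' This is a non sequitur: deleting the endpoints $b,w$ of a non-planar edge $e\in J$ does nothing to the endpoints of the planar edges it crosses, which lie elsewhere in the cell and remain available to the completion. Concretely, in the example of Appendix \ref{app:B} ($m=4$, one non-planar edge $e_\lambda$ per cell crossing the two vertical edges $e_1,e_2$), take $J=\{e_\lambda\}$ in a single cell and $S=\emptyset$: the insertions saturate only the two endpoints of $e_\lambda$, and one can exhibit perfect matchings of $G^0_L$ minus those two vertices that contain $e_1$ (even $e_1$ but not $e_2$: match the black endpoint of $e_2$ horizontally, its white endpoint vertically into the next row, and repair the two resulting odd rows by a chain of vertical dimers winding around the torus). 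Such completions appear in $\det\bigl(K_\th(\widehat B;\widehat W)\bigr)$ with nonzero coefficient, and the $\th$-sum cannot remove them: the combined coefficient $\tfrac12\sum_\th c_\th(\cdots)$ of any individual matching of the vertex-deleted graph is $\pm1$, never $0$. Moreover, once these matchings are present, your second pillar collapses as well: matchings of a vertex-deleted graph do \emph{not} all carry the same sign with respect to the orientations $D_\th$ of $G^0_L$ (this is the classical Fisher--Stephenson/defect-line phenomenon behind monomer correlations -- an alternating cycle enclosing exactly one deleted vertex flips the sign), and the sign depends precisely on which crossed edges the matching uses. So neither uniformity of $\mathrm{coeff}(M')$ nor the restriction to $P_J$-avoiding completions can be obtained by inspecting minors of $K_\th$, which is the route your whole argument relies on.

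This is also where your proposal diverges essentially from the paper. The paper never expands minors of the original Kasteleyn matrix. Instead it builds an auxiliary $2$-connected toroidal graph $G_{J,S}$ in which the edges of $N_L\cup P_J$ are \emph{removed from the edge set} -- so that completions avoid $P_J$ by fiat, not by an inertness argument -- and new edges $E_{J,S}$ are added inside the resulting merged faces to pair up the endpoints of $J\cup S$. The constrained sum on the left of \eqref{eq:11} is then $\bigl(\prod_{e\in E_{J,S}}t_e\partial_{t_e}\bigr)Z_{G_{J,S}}$, to which the Galluccio--Loebl four-orientation theorem \eqref{eq:Zlib} applies legitimately, giving sign uniformity; Lemma \ref{lem:KJS} (constructing a basic orientation of $G_{J,S}$ that agrees with $D_0$ on the surviving edges of $G^0_L$, extended edge-by-edge over the new edges as in Fig.\ \ref{fig:5}) is what allows the Kasteleyn matrices $K^{J,S}_\th$ to be compared with $K_\th$, and the factorization \eqref{eq:epsfact} comes from the fact that this orientation choice depends on $(J,S)$ only through $(J_x,S_x)$. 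The delicate passage between the $G_{J,S}$-Gaussian and the $G^0_L$-Gaussian with insertions is handled there through this auxiliary construction; your attempt to shortcut it by declaring the $P_J\setminus S$ edges inert is exactly the step that cannot be justified, so the proposal has a genuine gap rather than being an alternative proof.
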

Let us assume for the moment the validity of Lemma \ref{lem:1} and conclude the proof of Proposition \ref{prop:1}. 
Going back to \eqref{eq:ZJS}, we deduce that
\begin{equation}
  \label{eq:14}
  Z_{L,\underline t}=\sum_\th\frac{c_\th}2\int D\psi\,
  e^{-\psi^+K_\th\psi^-}\prod_{x\in \Lambda}\left[\sum_{J_x}\sum_{S_x\subset P_{J_x}}\epsilon^{J_x}_{S_x}\prod_{e\in J_x\cup S_x}\psi(e)\right].
\end{equation}
The expression in brackets in \eqref{eq:14} can be written as
\begin{equation}
  1+F_x(\psi)=e^{V^{(x)}(\psi|_{B_x})}
\end{equation}
where $F_x(\psi)$ is a polynomial in the Grassmann fields of the box $B_x$, such that $F_x(0)=0$ and containing only monomials of even degree, and
\begin{equation}
  \label{eq:Vx}
  V^{(x)}(\psi|_{B_x})=\sum_{n\ge1}\frac{(-1)^{n-1}}n \left(F_x(\psi)\right)^n.
\end{equation}
\end{proof}

\begin{proof}[Proof of Lemma \ref{lem:1}]
  First of all, let us define a $2-$connected graph $G_{J,S}$, embedded on the
  torus, obtained from $G_L$ as follows:
  \begin{enumerate}
  \item the edges belonging to $N_L\cup P_J$ are removed. At this point, every cell $B_x$ contains a certain number (possibly zero) of faces that are not elementary squares, and the graph is still $2$-connected, recall the discussion in the caption of Figure \ref{fig:2}.
  \item the boundary of every such non-elementary face $\eta$ contains
    an even number of vertices that are endpoints of edges in $J\cup
    S$. We connect these vertices pairwise via new edges that do not
    cross each other, stay within $\eta$ and have endpoints of
    opposite color. See Figure \ref{fig:4} for a description of a
    possible procedure. We let $E_{J,S}$ denote the collection of the
    added edges.
  \end{enumerate}

\begin{figure}[H]
\centering
\includegraphics[scale=.99]{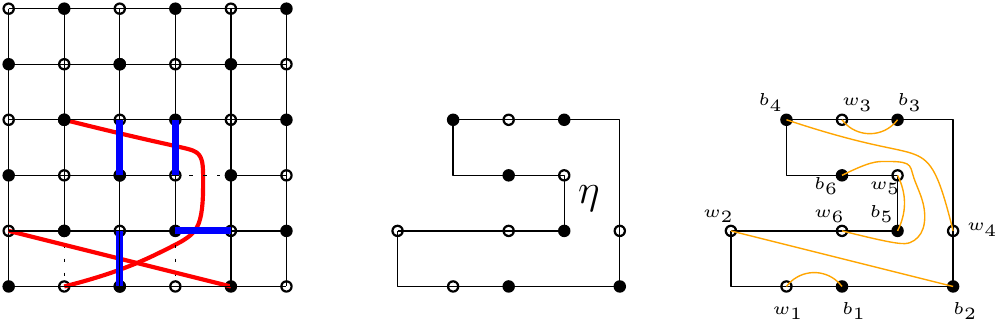}
\caption{Left drawing: a cell with a collection $J$ of non-diagonal
  edges (red) and of edges $S\subset P_J$ (thick blue edges). The
  dotted edges are those in $P_J\setminus S$. Center drawing: the
  non-elementary face $\eta$ obtained when the edges in $N_L\cup P_J$
  are removed. Only the endpoints of edges in $J\cup S$ are drawn.  Right drawing: a planar,
  bipartite pairing of the endpoints of $J\cup S$. The
  edges in $E_{J,S}$ are drawn in orange.  A possible
  algorithm for the choice of the pairing is as follows: choose arbitrarily a pair $(w_1,b_1)$ of
  white/black vertices that are adjacent along the boundary of $\eta$
  and pair them. At step $n>1$, choose arbitrarily a pair $(w_n,b_n)$
  that is adjacent once the vertices $w_i,b_i,i<n$ are
  removed. Note that some of the edges in $E_{J,S}$ may form double edges with the edges of $G^0_L$ on the boundary of $\eta$ (this is the case for $(b_1,w_1)$ and $(b_3,w_3)$ in the example in the figure).
  }\label{fig:4}
\end{figure}
  
  The first observation is that the l.h.s. of \eqref{eq:11} can be written as
  \begin{equation}\label{before}
   \Big(\prod_{e\in J\cup S} t_e \Big)\Big(\sum_{\substack{M\in \Omega_{G_{J,S}}:\\M\supset E_{J,S}}} w(M)\Big)\Big|_{t_{e}= 1, e\in E_{J,S}}
  \end{equation}
  where $\Omega_{G_{J,S}}$ is the set of perfect matchings of the graph
  $G_{J,S}$ and as usual $ w(M)$ is the product of the edge weights in
  $M$. The new edges $E_{J,S}$ are assigned a priori arbitrary weights $\{t_e\}_{e\in E_{J,S}}$, to be eventually replaced by $1$, {and the partition function on  $G_{J,S}$ is called $Z_{G_{J,S}}$.}

  Let $K^{J,S}_\th, \th\in\{-1,+1\}^2$ denote the  Kasteleyn matrices
  corresponding to the four relevant orientations $D_\th$ of $G_{J,S}$,
  for some choice of the basic orientation on $G_{J,S}$ (recall
  Definition \ref{def:bo}).  Since $G_{J,S}$ is embedded on the torus
  and is $2$-connected, Eq.\eqref{eq:Zlib} 
  guarantees that the sum in the second parentheses in \eqref{before} can be rewritten (before setting $t_e=1$ for all $e\in E_{J,S}$) as
\begin{eqnarray}
  \label{eq:17}
  \sum_{\substack{M\in \Omega_{G_{J,S}}\\M\supset E_{J,S}}} w(M){=
\left(\prod_{e\in E_{J,S}}t_e\partial_{t_e}\right)Z_{G_{J,S}}}=
  \frac12\sum_{\th}c_\th\left(\prod_{e\in E_{J,S}}t_e\partial_{t_e}\right)\det K^{J,S}_\th.
\end{eqnarray}
In fact, the suitable choice of ordering of vertices mentioned in Remark \ref{rem:sceltasegno} (and therefore the value of signs $c_\theta$) is independent of $J,S$, because the reference configuration $M_0$ is independent of $J,S$. 

Using the basic properties of Grassmann variables, the r.h.s. of \eqref{eq:17} equals
\begin{multline}
  \frac12\sum_{\th}c_\th\left(\prod_{e\in E_{J,S}}t_e\partial_{t_e}\right)\int D\psi\,
  e^{-\psi^+ K^{J,S}_\th \psi^-}\\
  =  \frac12\sum_{\th}c_\th\int D\psi\,
  e^{-\psi^+ K^{J,S}_\th \psi^-}\left(\prod_{e\in E_{J,S}}\psi^{J,S}_\th(e)\right)\label{eq:19}
\end{multline}
where, in analogy with \eqref{eq:psie}, $\psi^{J,S}_\th(e)=-K^{J,S}_\th(b,w)\psi^+_b\psi^-_w$.
We claim:
\begin{lemma}
The choice of the basic orientation of $G_{J,S}$ can be made so that the Kasteleyn  matrices $K_\th^{J,S}$ satisfy:
\begin{enumerate}
\item [(i)] if $e=(b,w)\in G_{J,S}\setminus E_{J,S}$, then $K^{J,S}_\th(b,w)= K_\th(b,w)$, with $K_\th$ the Kasteleyn
  matrices of the graph $G_L^{0}$, fixed by the choices explained in
  Section \ref{sec:KG0}.
\item [(ii)] if instead $e=(b,w)\in E_{J,S}$ and is contained in cell $B_x$, then $K^{J,S}_\th(b,w)=t_e
  \sigma_e^{J_x,S_x}$ with $  \sigma_e^{J_x,S_x}=\pm1$ a sign that depends only on $J_x,S_x$.
\end{enumerate}
\label{lem:KJS}
\end{lemma}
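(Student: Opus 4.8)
The plan is to fix the basic orientation of $G_{J,S}$ by \emph{inheriting} it from the Kasteleyn data of $G_L^0$, and then to track carefully how the clockwise-odd condition propagates. First I would observe that $G_{J,S}$ differs from $G_L^0$ only within the cells: outside the cells (in the corridors), every edge of $G_{J,S}$ is an edge of $G_L^0$, and conversely every planar edge of $G_L^0$ that was not removed (i.e.\ not in $P_J$) survives in $G_{J,S}$. I would therefore \emph{define} the basic orientation $D_0^{J,S}$ of $G_{J,S}$ to agree with the orientation $D_0$ of $G_L^0$ (fixed in Section \ref{sec:KG0}) on all edges of $E_0^L$ that survive, and to orient the new edges in $E_{J,S}$ in whatever way is forced by the clockwise-odd requirement. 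With this choice, claim (i) is essentially immediate for all surviving planar edges \emph{away} from the modified faces, since the matrix element $K_\th(b,w)$ is determined by the orientation and weight of $e$, both of which are unchanged; the $\theta$-dependent signs attached to the wrapping edges $E_1,E_2$ are also unaffected because those boundary edges live in the corridors and are never removed.

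The real content is verifying that $D_0^{J,S}$ \emph{can} be completed to a basic orientation consistent with $D_0$, i.e.\ that the clockwise-odd parity is preserved on the faces that were \emph{not} touched by the surgery, and can be \emph{achieved} on the new non-elementary faces by orienting the edges of $E_{J,S}$. For the untouched elementary square faces, their boundary cycles are unchanged, so they remain clockwise-odd automatically, giving (i) for these as well. For a non-elementary face $\eta$ created in step (1) of the construction, its boundary is a cycle made of surviving edges of $G_L^0$ (whose orientations are already fixed by $D_0$) together with the newly added edges of $E_{J,S}$ lying on $\partial\eta$. Here I would invoke the standard fact from Kasteleyn theory (cf.\ \cite{Galluccio}) that, given a face whose boundary is a cycle with some edges pre-oriented, the remaining edges can always be oriented so that the face is clockwise-odd, provided one is free to choose those orientations independently. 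The pairing algorithm of Figure \ref{fig:4} produces the edges of $E_{J,S}$ as a planar non-crossing matching inside $\eta$; I would argue that these chords subdivide $\eta$ into smaller faces, and orient the chords recursively (innermost first) so that each subface becomes clockwise-odd, which then forces $\eta$ itself to be clockwise-odd by the product-of-parities rule for nested cycles.

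Once the orientation exists and is fixed, claim (ii) follows by definition of the Kasteleyn matrix: for $e=(b,w)\in E_{J,S}$ in cell $B_x$ one has $K_\th^{J,S}(b,w)=\pm t_e$, with the sign being exactly the orientation sign of $e$ under $D_\th^{J,S}$. Since $e$ lies strictly inside the cell $B_x$, it belongs to neither $E_1$ nor $E_2$, so its orientation is inherited from the basic orientation $D_0^{J,S}$ and is therefore \emph{independent of $\theta$}; I would denote this sign $\sigma_e^{J_x,S_x}$. Its dependence on $J,S$ only through the local data $J_x,S_x$ follows because the non-elementary face containing $e$, together with the chords produced by the pairing algorithm, is entirely determined by which non-planar edges and crossed edges lie in the single cell $B_x$ (the surgery is local to each cell, by the assumption that non-planar edges do not exit their cells). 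The step I expect to be the main obstacle is the parity bookkeeping in the previous paragraph: showing that the recursive orientation of the non-crossing chords is globally consistent and that it is compatible with the \emph{already fixed} orientations on $\partial\eta$ inherited from $D_0$, without creating a parity conflict that would obstruct clockwise-oddness of $\eta$. This requires checking that the total number of clockwise-odd constraints is compatible with the number of free edges—essentially an Euler-characteristic / cycle-space count—which is where the $2$-connectedness of $G_{J,S}$ (guaranteed by the construction in Figure \ref{fig:2}) is used crucially.
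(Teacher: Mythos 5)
Your overall strategy---inherit the orientation of $G^0_L$ on the surviving edges, then orient the new chords $E_{J,S}$ so as to preserve clockwise-oddness face by face---is the same as the paper's, and your treatment of claim (ii) (the sign of a chord is $\theta$-independent because the chord is interior to a cell, and depends only on $J_x,S_x$ because the surgery is local) is sound. However, the step you yourself flag as ``the main obstacle'' is a genuine gap, and the tools you propose do not close it. Once the boundary $\partial\eta$ of a non-elementary face of the edge-deleted graph (call it $G^{(0)}$, i.e.\ $G^0_L$ with $N_L\cup P_J$ removed) is pre-oriented by $D_0$, its clockwise parity is \emph{already determined}; if $\partial\eta$ happened to be clockwise-even, then \emph{no} orientation of the non-crossing chords inside $\eta$ could render all subfaces clockwise-odd. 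Indeed, with $c$ chords one has $c$ free signs but $c+1$ subface constraints, and summing the constraints mod $2$ (each chord lies on exactly two subfaces and is traversed in opposite senses, so it contributes $1$ to the sum) shows that the unique consistency condition is precisely the clockwise-oddness of $\partial\eta$, independently of how the chords are oriented. Your argument runs this implication backwards: making the subfaces odd cannot ``force'' $\eta$ to be odd, because the parity of $\eta$ is not a free quantity. For the same reason, the ``standard fact'' you invoke from \cite{Galluccio} (remaining edges of a face can be oriented to make it clockwise-odd) does not apply: it presupposes edges that are free for that face alone, whereas each chord is shared by two constrained subfaces.

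The missing ingredient, which is exactly how the paper closes this gap, is the classical parity theorem for Kasteleyn orientations \cite[Sect.V.D]{Kast67}: if all inner faces of a planar graph are clockwise-odd, then any cycle enclosing $n$ vertices in its interior has clockwise parity $n+1 \pmod 2$; in particular, a cycle enclosing \emph{no} vertices is clockwise-odd. Since $G^{(0)}$ is obtained from $G^0_L$ by deleting edges only (no vertices), and remains $2$-connected by the construction of Figure \ref{fig:2}, the boundary of each non-elementary face is a cycle of $\mathbb Z^2$ with no vertices in its interior, hence automatically clockwise-odd under the inherited $D_0$. With this established, your recursive chord orientation can be made rigorous exactly as in the paper: add the chords of $E_{J,S}$ one at a time; each addition splits a clockwise-odd face into two, and the parity count of Figure \ref{fig:5} shows there is a \emph{unique} orientation of the new chord making both pieces clockwise-odd. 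This uniqueness, combined with the fact that the face data are determined by $J_x,S_x$ alone, also delivers the locality statement in (ii).
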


Assuming Lemma \ref{lem:KJS}, and letting $E_{J_x,S_x}$ denote the subset of edges in $E_{J,S}$ that belong to cell $B_x$, we rewrite \eqref{eq:19} as
\begin{equation} \label{eq:3.39}\frac12\sum_{\th}c_\th\int D\psi\,
  e^{-\psi^+ K_\th \psi^-}\prod_x\prod_{e=(b,w)\in E_{J_x,S_x}}(-t_e\sigma_e^{J_x,S_x}\psi^+_b\psi^-_w),\end{equation}
where we could replace $K^{J,S}_\theta$ by $K_\theta$ at exponent, because 
\begin{equation}
\begin{split} & e^{-\psi^+ K^{J,S}_\th \psi^-}\left(\prod_{e\in E_{J,S}}\psi^{J,S}_\th(e)\right)
=\Big(\prod_{e=(b,w)\in G_{J,S}\setminus E_{J,S}}e^{-\psi^+_b K^{J,S}_\th(b,w) \psi^-_w}\Big)\left(\prod_{e\in E_{J,S}}\psi^{J,S}_\th(e)\right)\\
& =\Big(\prod_{e=(b,w)\in G_{J,S}\setminus E_{J,S}}e^{-\psi^+_b K_\th(b,w) \psi^-_w}\Big)\left(\prod_{e\in E_{J,S}}\psi^{J,S}_\th(e)\right)
=e^{-\psi^+ K_\th \psi^-}\left(\prod_{e\in E_{J,S}}\psi^{J,S}_\th(e)\right),\end{split}
\end{equation}
thanks to the Grassmann anti-commutation properties and the fact that $K^{J,S}_\theta(b,w)=K_\theta(b,w)$ for any $(b,w)\in G_{J,S}\setminus E_{J,S}$.
Eq.\eqref{eq:3.39} can be further rewritten as 
  \begin{equation}
\frac{\prod_{e\in E_{J,S}}t_e}{\prod_{e\in J\cup S}t_e}\sum_{\th}\frac{c_\th}2 \int D\psi\,
  e^{-\psi^+ K_\th \psi^-}\prod_x \Big(\epsilon^{J_x}_{S_x}\prod_{e\in J_x\cup S_x}\psi(e)\Big),
  \end{equation}
  where $\epsilon^{J_x}_{S_x}$ is a sign, equal to
\begin{equation}
  \pi(J_x,S_x)\Big(\prod_{e\in E_{J_x,S_x}}\sigma_e^{J_x,S_x}\Big)\Big(\prod_{e\in S_x}{\rm sign}(K_\th(e))\Big),
  \label{veryverysimple}
\end{equation}
and $\pi(J_x,S_x)$ is the sign of the permutation needed to recast $\prod_{(b,w)\in E_{J_x,S_x}}\psi^+_b\psi^-_w$ into the form $\prod_{(b,w)\in J_x\cup S_x}\psi^+_b\psi^-_w$;
note also that, for $e\in S_x$, $K_\th(e)$ is independent of $\theta$. Putting things together, the statement of  Lemma \ref{lem:1} follows. 
\end{proof}

\begin{proof}[Proof of Lemma \ref{lem:KJS}]
  Recall that $G_{J,S}$ is a $2$-connected graph, with the same vertex set as $G^0_L$, and edge set obtained, starting from $E_L$, by removing 
  the edges in $N_L\cup P_J$ and by adding those in $E_{J,S}$.  We introduce a sequence of $2$-connected graphs
  $G^{(n)},n=0,\dots, z= |E_{J,S}|$ embedded on the torus,  all
  with the same vertex set.  Label the edges in $E_{J,S}$
  as $e_1,\dots,e_z$ (in an arbitrary order). Then, $G^{(0)}$ is the
  graph $G_L^0$ with the edges in $N_L\cup P_J$ removed and
  $G^{(n)},1\le n\le z$ is obtained from $G^{(0)}$ by adding edges
  $e_1,\dots,e_n$. Note that $G^{(z)}=G_{J,S}$. We will recursively
  define the basic orientation $D^{(n)}$  of $G^{(n)}$, in such a
  way that for $n=z$ the properties stated in the Lemma hold for the Kasteleyn matrices $K_\th^{(z)}=K^{J,S}_\th$.
   The construction of the basic orientation is
  such that for $n>m$, $D^{(n)}$ restricted to the edges of
  $G^{(m)}$ is just $D^{(m)}$. That is, at each step $n>1$ we
  just need to define the orientation of  $e_n$.

  For $n=0$, $G^{(0)}$ is a sub-graph of $G_L^0$ and we simply define
  $D^{(0)}$ to be the restriction of $D$ (the basic orientation of
  $G^0_L$) to the edges of the basis graph of $G^{(0)}$.  Since the
  orientation of these edges will not be modified in the iterative
  procedure, point (i) of the Lemma is automatically satisfied. We
  need to show that $D^{(0)}$ is indeed a basic orientation for
  $G^{(0)}$, in the sense of Definition \ref{def:bo}. In fact, an
  inner face $\eta$ of the basis graph of $G^{(0)}$ is either an
  elementary square face (which belongs also to the basis graph of
  $G_L^0$), or it is a non-elementary face as in the middle drawing of
  Fig. \ref{fig:4}. In the former case, the fact that the boundary of
  $\eta$ is clockwise odd is trivial, since its orientation is the same
  as in the basic orientation of $G_L^0$. In the latter case, the
  boundary of $\eta$ is a cycle $\Gamma$ of $\mathbb Z^2$ that
  contains no vertices in its interior. The fact that $\Gamma$ is clockwise odd for $D$ then is well-known \cite[Sect.V.D]{Kast67}.

 Assume now that the basic orientation $D^{(n)}$ of $G^{(n)}$ has been defined for $n\ge0$ and that the choice of orientation of each
  $e=(b,w)\in E_{J,S}$ that is an edge of $G^{(n)}$ contained in the cell
  $B_x$, has been done in a way that depends  on $J,S$ only
  through $J_x,S_x$. If $n=z$,  recalling how Kasteleyn matrices $K_\th$ are defined in terms of the orientations,
  claim (ii) of the Lemma is proven. Otherwise, we proceed to step
  $n+1$, that is we define the orientation of $e_{n+1}$ as explained in Figure \ref{fig:5}. This choice is unique and, again, depends on $J,S$ only
  through $J_x,S_x$. The proof of the Lemma is then concluded.
\end{proof}

  \begin{figure}[H]
\centering
\includegraphics[scale=1]{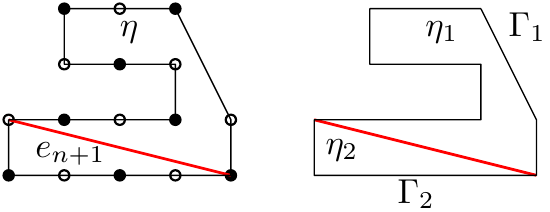}
\caption{An inner face $\eta$ of $G^{(n)}$ and the edge $e_{n+1}$. After adding $e_{n+1}$, $\eta$ split into  two inner faces $\eta_1,\eta_2$ of $G^{(n+1)}$. By assumption, the boundary $\Gamma$ of $\eta$ is
  clockwise-odd for the orientation $D^{(n)}$. Therefore, exactly one of the two paths $\Gamma_1,\Gamma_2$ contains an odd number of anti-clockwise oriented edges and there is a unique  orientation of $e_{n+1}$ such that the boundaries of both $\eta_1,\eta_2$ are clockwise odd. Since, by induction, the orientation of $\Gamma$ depends on $J,S$ only through $J_x,S_x$, with $x$ the label of the cell the face belongs to, the same is true also for the orientation of $e_{n+1}$.}
  \label{fig:5}
\end{figure}

\subsection{Generating function and Ward Identities}\label{sec:3.4}
In this subsection we consider again dimer weights that are periodic under translations by integer multiples of ${\bf e_1,\bf e_2}$.

In view of Proposition \ref{prop:1}, the generating function $W_L(A)$ of dimer correlations, defined, for $A: E_L \rightarrow \mathbb{R}$, by 
\begin{equation} e^{W_L(A)}:=\sum_{M\in\Omega_L} w(M) \prod_{e\in E_L} e^{A_e\mathds  1_e(M)}, \end{equation}
can be equivalently rewritten as $e^{W_L(A)}=\frac12\sum_{\theta\in\{1,-1\}^2}c_\theta e^{\mathcal W^{(\theta)}_L(A)}$, where
\begin{equation}\label{eq:ZGrassgen}
e^{\mathcal W^{(\theta)}_L(A)}=\int D\psi e^{S_\th(\psi)+V(\psi,A)}, 
\end{equation}
where $S_\th(\psi)=-\psi^+ K_\th\psi^-$ and $V(\psi,A):=
-\psi^+K_\th^A\psi_--S_\th(\psi)+V_{\underline t(A)}(\psi)$. Here, $K_\th^A$
(resp. $V_{\underline t(A)}(\psi)$) is the Kasteleyn matrix as in Section
\ref{sec:KG0} (resp. the potential as in
\eqref{eq:Vt}) with edge weights $\underline t(A)=\{t_e
e^{A_e}\}_{e\in E_L}$. 

As in \cite[Sect.3.2]{GMT20}, it is convenient to introduce a generalization of the generating function, in the presence of an external Grassmann field coupled with $\psi$. Namely, 
letting $\phi=\{\phi^\pm_{\bf x}\}_{{\bf x} \in {\bf \Lambda}}$ a new set of Grassmann variables, we define 
\begin{equation}\label{gener}\begin{split}
& e^{W_L(A,\phi)}:=\frac12\sum_{\theta\in\{1,-1\}^2}c_\theta e^{\mathcal W^{(\theta)}_L(A,\phi)},\\
\text{with}\qquad &e^{\mathcal W^{(\th)}_L(A,\phi)}:=\int D\psi\, e^{S_\th(\psi)+V(\psi,A)+(\psi,\phi)}\end{split}
\end{equation}
and $(\psi,\phi):=\sum_{{\bf x} \in{\bf \Lambda}}(\psi^+_{{\bf x}}\phi^-_{\bf x}+\phi^+_{\bf x}\psi^-_{\bf x})$. The generating function is invariant under a local gauge symmetry, 
which is associated with the local conservation law of the number of incident dimers at each vertex of $\Lambda$: 
\begin{proposition}[Chiral gauge symmetry]
Given two functions $\alpha^+ : {\bf \Lambda} \rightarrow \mathbb{R}$ and $\alpha^- :  {\bf \Lambda} \rightarrow \mathbb{R}$, we have
\begin{equation}\label{gaugeinvarianceeqn}
{W}_{L}(A,\phi)=-i\sum_{{\bf x} \in {\bf \Lambda}} (\alpha_{{\bf x}}^++\alpha_{\bf x}^-)+{W}_{L}(A+i\alpha,\phi e^{i\alpha})
\end{equation}
where, if $e=(b,w) \in E_L$ with ${\bf x}$ and ${\bf y}$ the coordinates of $b$ and $w$, respectively, $(A+i\alpha)_e:=A_e+i(\alpha^+_{\bf x}+\alpha^-_{\bf y})$,  
while $(\phi e^{i\alpha})^\pm_{\bf x}:=\phi^\pm_{\bf x} e^{i\alpha^\mp_{\bf x}}$. 
\end{proposition}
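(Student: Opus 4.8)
The plan is to prove \eqref{gaugeinvarianceeqn} by performing, separately in each Grassmann integral of \eqref{gener}, the chiral change of variables
\[
\psi^+_{\bf x}=e^{i\alpha^+_{\bf x}}\tilde\psi^+_{\bf x},\qquad \psi^-_{\bf x}=e^{i\alpha^-_{\bf x}}\tilde\psi^-_{\bf x},\qquad {\bf x}\in{\bf \Lambda}.
\]
Since the scalar prefactor produced below will be independent of $\theta$, it suffices to establish the transformation rule for each $\mathcal W^{(\theta)}_L$ and then recombine the four integrals through $e^{W_L}=\frac12\sum_\theta c_\theta e^{\mathcal W^{(\theta)}_L}$.

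First I would record the algebraic invariance of the integrand. Recalling that $S_\theta(\psi)+V(\psi,A)=-\psi^+K^A_\theta\psi^-+V_{\underline{t}(A)}(\psi)$, the key structural observation is that every monomial occurring in this exponent is a product of elementary factors $\psi^+_b\psi^-_w$, one for each edge $e=(b,w)$ entering the monomial, with coefficient proportional to the edge weight $t_ee^{A_e}$. For the quadratic Kasteleyn term this is manifest from the definition of $K^A_\theta$; for $V_{\underline{t}(A)}$ it follows from the subset expansion \eqref{eq:Vx}, since $F_x$ is a sum of products $\prod_{e\in J_x\cup S_x}\psi(e)$ of factors of the form \eqref{eq:psie} (with weights $t_ee^{A_e}$), and since Grassmann multiplication of such monomials either annihilates them, when a variable is repeated, or again produces a product of $\psi^+_b\psi^-_w$ factors over a disjoint edge set. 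Under the substitution, each factor $\psi^+_b\psi^-_w$ is multiplied by $e^{i(\alpha^+_b+\alpha^-_w)}$, which is precisely the factor turning $t_ee^{A_e}$ into $t_ee^{(A+i\alpha)_e}$, by the definition $(A+i\alpha)_e=A_e+i(\alpha^+_{\bf x}+\alpha^-_{\bf y})$ for $e=(b,w)$ with $b,w$ of coordinates ${\bf x},{\bf y}$. Hence $-\psi^+K^A_\theta\psi^-+V_{\underline{t}(A)}(\psi)$ becomes $-\tilde\psi^+K^{A+i\alpha}_\theta\tilde\psi^-+V_{\underline{t}(A+i\alpha)}(\tilde\psi)$. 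For the source term, $\psi^+_{\bf x}\phi^-_{\bf x}\mapsto\tilde\psi^+_{\bf x}(\phi^-_{\bf x}e^{i\alpha^+_{\bf x}})$ and $\phi^+_{\bf x}\psi^-_{\bf x}\mapsto(\phi^+_{\bf x}e^{i\alpha^-_{\bf x}})\tilde\psi^-_{\bf x}$, so that $(\psi,\phi)\mapsto(\tilde\psi,\phi e^{i\alpha})$ by the definition $(\phi e^{i\alpha})^\pm_{\bf x}=\phi^\pm_{\bf x}e^{i\alpha^\mp_{\bf x}}$.

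Next I would account for the Berezin Jacobian. Using the normalization \eqref{eq:Dpsi} and the fact that each of the $2|{\bf \Lambda}|$ variables appears exactly once in the top monomial $\prod_{\bf x}\psi^-_{\bf x}\psi^+_{\bf x}$, the rescaling multiplies this monomial by $\prod_{\bf x}e^{i(\alpha^+_{\bf x}+\alpha^-_{\bf x})}$; hence $\int D\psi\,f(\psi)=e^{-i\sum_{\bf x}(\alpha^+_{\bf x}+\alpha^-_{\bf x})}\int D\tilde\psi\,f(e^{i\alpha}\tilde\psi)$ for any integrand $f$, where $e^{i\alpha}\tilde\psi$ denotes the componentwise rescaling above. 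Applying this with $f$ the integrand of $\mathcal W^{(\theta)}_L(A,\phi)$, and using the covariance just established, gives $e^{\mathcal W^{(\theta)}_L(A,\phi)}=e^{-i\sum_{\bf x}(\alpha^+_{\bf x}+\alpha^-_{\bf x})}e^{\mathcal W^{(\theta)}_L(A+i\alpha,\phi e^{i\alpha})}$ for each $\theta$. Multiplying by $\frac12 c_\theta$, summing over $\theta$, and taking logarithms yields \eqref{gaugeinvarianceeqn}.

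The only genuinely substantive step is the charge-conservation structure of $V_{\underline{t}(A)}$, namely that no monomial of the interaction contains an unpaired black or white variable: this is the Grassmann counterpart of the local conservation of the number of dimers incident to each vertex, and it is simultaneously what makes the action transform covariantly and what produces the clean, $\theta$-independent scalar prefactor $e^{-i\sum_{\bf x}(\alpha^+_{\bf x}+\alpha^-_{\bf x})}$ (one factor $e^{-i(\alpha^+_{\bf x}+\alpha^-_{\bf x})}$ per vertex, mirroring the fact that every vertex is covered by exactly one dimer in every perfect matching). Everything else is routine bookkeeping of the Jacobian and of signs, which I do not expect to present any difficulty.
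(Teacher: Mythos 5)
Your proof is correct and is essentially the paper's own argument: the paper's proof is precisely the chiral change of variables $\psi^\pm_{\bf x}\mapsto e^{i\alpha^\pm_{\bf x}}\psi^\pm_{\bf x}$ in each Grassmann integral (it simply defers the bookkeeping to the proof of Proposition 1 in \cite{GMTHaldane}), with the covariance of the action following from the edge-factor structure of $-\psi^+K^A_\theta\psi^-+V_{\underline t(A)}(\psi)$ and the $\theta$-independent prefactor coming from the Berezin Jacobian, exactly as you lay out.
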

The proof simply consists in performing a change of variables in the Grassmann integral, see \cite[Proof of Prop.1]{GMTHaldane}. 

The gauge symmetry \eqref{gaugeinvarianceeqn}, in turn, implies exact identities among correlation functions, known as Ward Identities. Given edges $e_1,\ldots,e_k$ and a
collection of coordinates ${\bf x_1},\ldots,{\bf x_n},{\bf y_1},\ldots,{\bf y_n}$, define\footnote{We refer e.g. to \cite[Remark 5]{GMTHaldane} for the meaning of the
  derivative with respect to  Grassmann variables}
  the truncated multi-point correlation associated with the
generating function $\mathcal{W}_L(A,\phi)$:
\begin{equation}\label{multipointcorr}\begin{split}
& g_L(e_1,\ldots,e_k;{\bf x_1},\ldots,{\bf x_n};{\bf y_1},\ldots,{\bf y_n})\\
& \quad :=\partial_{A_{e_1}} \cdots \partial_{A_{e_k}}\partial_{\phi^-_{\bf y_1}} \cdots \partial_{\phi^-_{\bf y_n}} \partial_{\phi^+_{\bf x_1}} \cdots \partial_{\phi^+_{\bf x_n}}{W}_L(A,\phi) \big|_{A \equiv 0, \phi \equiv 0}. \end{split}
\end{equation}
Three cases will play a central role in the following: the {\it  interacting propagator} $G^{(2)}$, the
{\it interacting vertex function} $G^{(2,1)}$ and the {\it interacting dimer-dimer correlation} $G^{(0,2)}$, which deserve a distinguished notation: 
letting ${\bf x}=(x,\ell), {\bf y}=(y,\ell'), {\bf z}=(z,\ell'')$, 
and denoting by $e$ (resp. $e'$) the edge with black vertex ${\bf x}=(x,\ell)$ (resp. ${\bf y}=(y,\ell')$) and label $j\in\mathcal J_\ell$ (resp. $j'\in\mathcal J_{\ell'}$), we define
\begin{equation}\begin{split}
& G^{(2)}_{\ell,\ell';L}(x, y):=g_L(\emptyset;{\bf x};{\bf y}) \\
& G^{(2,1)}_{j,\ell,\ell',\ell'';L}({ x},{y},{ z}):=	g_L(e;{\bf y};{\bf z})\\
& G^{(0,2)}_{j,j',\ell,\ell';L}({x,y}):=g_L(e,e';\emptyset;\emptyset).\end{split}\label{notation}
\end{equation}
As a byproduct of the analysis of Section \ref{lasezionepesante}, the $L \to \infty$ of all multi-point correlations $g_L(e_1,\ldots,e_k,{\bf x_1},\ldots,{\bf x_n},{\bf y_1},\ldots,{\bf y_n})$ exist; we denote the limit simply by dropping the index $L$. Let us define the Fourier transforms of the interacting propagator and interacting vertex function via the following conventions:
for $\ell,\ell',\ell''\in \mathcal I$ and $j\in \mathcal J_{\ell}$, we let 
\begin{equation}
\label{eq:convFourier}\begin{split}
& \hat G^{(2)}_{\ell, \ell '}(p):=\sum_{x\in\mathbb Z^2} e^{ip  x} G^{(2)}_{\ell,\ell'}(x, 0) \\
& \hat G^{(2,1)}_{j,\ell,\ell ', \ell ''}(k,p):=\sum_{x,y\in\mathbb Z^2} e^{-ip  x-ik \cdot y}G^{(2,1)}_{j,\ell,\ell',\ell''}(x,0,y).
\end{split} \end{equation}
\begin{proposition}[Ward identity]\label{prop:WI}
Given $\ell ', \ell '' \in \mathcal{I}$, 
we have
\begin{equation}\label{FspWid}
	\sum_{\substack{e \in \mathcal{E}}}\hat{G}^{(2,1)}_{j(e),\ell(e),\ell',\ell''}(k,p)(e^{-ip \cdot v(e)}-1)=\hat{G}			^{(2)}_{\ell',\ell''}(k+p)-\hat{G}^{(2)}_{\ell',\ell''}(k)
\end{equation}
where $\mathcal{E}$ is the set of edges $e=(b(e),w(e))$ having an endpoint in the cell $B_{(0,0)}$ and the other in $B_{(0,-1)} \cup B_{(-1,0)}$. Also, $\ell(e)\in\mathcal I$ is the type of $b(e)$, $j(e)\in\mathcal J_{\ell(e)}$ is the label associated with the edge $e$, while $v(e)\in \{(0,\pm1),(\pm1,0)\}$ is the difference of cell labels of $w(e)$ and $b(e)$, see discussion after \eqref{krondecM}.
\end{proposition}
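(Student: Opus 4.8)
The plan is to derive the Ward identity \eqref{FspWid} directly from the chiral gauge symmetry \eqref{gaugeinvarianceeqn}, following the strategy of \cite{GMT20,GMTHaldane}. Since \eqref{gaugeinvarianceeqn} is an exact identity for \emph{every} choice of the real parameters $\alpha^\pm$, both sides can be differentiated with respect to $\alpha^\pm$ at $\alpha\equiv 0$, and I would exploit this twice, separately for the ``black'' and the ``white'' charge. Differentiating in $\alpha^+_{(w,\ell_0)}$ at $\alpha\equiv 0$ (with $(w,\ell_0)$ a black vertex) and using that the left-hand side is $\alpha$-independent, the constant term $-i\sum_{\bf x}(\alpha^+_{\bf x}+\alpha^-_{\bf x})$ contributes $-i$, the shift $(A+i\alpha)_e$ contributes only through edges whose black endpoint is $(w,\ell_0)$, and the factor $(\phi e^{i\alpha})^-_{\bf x}=\phi^-_{\bf x}e^{i\alpha^+_{\bf x}}$ contributes through $\phi^-_{(w,\ell_0)}$. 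This yields the local functional identity
\[
\sum_{e:\,b(e)=(w,\ell_0)}\partial_{A_e}W_L(A,\phi)+\phi^-_{(w,\ell_0)}\,\partial_{\phi^-_{(w,\ell_0)}}W_L(A,\phi)=1 ,
\]
and, symmetrically, differentiating in $\alpha^-_{(w,\ell_0)}$ at a white vertex gives the analogous identity with $b(e)$ replaced by $w(e)$ and $\phi^-$ replaced by $\phi^+$.

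Next I would differentiate both local identities with respect to the external sources $\partial_{\phi^+_{(y,\ell')}}\partial_{\phi^-_{(z,\ell'')}}$ and set $A\equiv\phi\equiv 0$; the constant right-hand side is annihilated. In the ``black'' identity, the term $\sum_e\partial_{A_e}W_L$ produces, by \eqref{multipointcorr}--\eqref{notation}, the sum $\sum_{e:\,b(e)=(w,\ell_0)}G^{(2,1)}_{j(e),\ell_0,\ell',\ell''}(w,y,z)$ of vertex functions over the edges incident to the fixed black vertex, while the term $\phi^-\partial_{\phi^-}W_L$, handled via the Grassmann Leibniz rule, leaves only a \emph{contact term}: the explicit $\phi^-_{(w,\ell_0)}$ is killed at $\phi\equiv 0$ unless it is removed by $\partial_{\phi^-_{(z,\ell'')}}$, producing $\mathbf 1[(z,\ell'')=(w,\ell_0)]\,G^{(2)}_{\ell',\ell''}(y,z)$. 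Thus the black identity reads, in position space,
\[
\sum_{e:\,b(e)=(w,\ell_0)}G^{(2,1)}_{j(e),\ell_0,\ell',\ell''}(w,y,z)=-\mathbf 1[(z,\ell'')=(w,\ell_0)]\,G^{(2)}_{\ell',\ell''}(y,z),
\]
and the white identity is obtained analogously, the sum running over edges with $w(e)=(w,\ell_0)$ and the contact term now forcing $(y,\ell')=(w,\ell_0)$.

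I would then sum each identity over $\ell_0\in\mathcal I$, pass to the $L\to\infty$ limit (legitimate since Section \ref{lasezionepesante} guarantees convergence of all multipoint correlations), and Fourier transform as in \eqref{eq:convFourier}, fixing the $\phi^+$ cell at the origin and summing the remaining positions against $e^{-ipx}$ (edge/black cell) and $e^{-ikz}$ ($\phi^-$ cell). Because $\hat G^{(2,1)}$ is indexed by the \emph{black} endpoint, the black identity gives directly $\sum_{\text{edges}}\hat G^{(2,1)}_{j(e),\ell(e),\ell',\ell''}(k,p)=-\hat G^{(2)}_{\ell',\ell''}(k+p)$, the combined phase $e^{-i(k+p)z}$ of the contact term producing the shifted momentum. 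In the white identity the natural summation variable is the white cell $w=x_b+v(e)$; rewriting $e^{-ipw}=e^{-ipv(e)}e^{-ip x_b}$ to restore the black-cell Fourier sum yields $\sum_{\text{edges}}e^{-ipv(e)}\hat G^{(2,1)}_{j(e),\ell(e),\ell',\ell''}(k,p)=-\hat G^{(2)}_{\ell',\ell''}(k)$. Subtracting the two relations gives exactly $\sum_{\text{edges}}(e^{-ipv(e)}-1)\hat G^{(2,1)}=\hat G^{(2)}_{\ell',\ell''}(k+p)-\hat G^{(2)}_{\ell',\ell''}(k)$.

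It remains to identify this edge sum with the sum over $\mathcal E$ in \eqref{FspWid}. Since $(e^{-ipv(e)}-1)=0$ whenever $v(e)=0$, all intra-cell edges (in particular all non-planar ones) drop out, and only cross-cell edges survive. Each translation class of a cross-cell edge has a unique representative crossing the bottom or left boundary of $B_{(0,0)}$: edges with $v(e)\in\{(0,-1),(-1,0)\}$ have their black endpoint in $B_{(0,0)}$, while those with $v(e)\in\{(0,1),(1,0)\}$ have their white endpoint in $B_{(0,0)}$ and their black endpoint in $B_{(0,-1)}$ or $B_{(-1,0)}$; in all four cases the edge lies in $\mathcal E$, and conversely $\mathcal E$ contains exactly one representative per class. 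This gives \eqref{FspWid}. I expect the main delicate points to be the Grassmann sign bookkeeping in the contact terms (which fixes the overall signs and the assignment of $\hat G^{(2)}(k+p)$ versus $\hat G^{(2)}(k)$) and the clean geometric identification of $\mathcal E$ with a complete set of representatives of the cross-cell edge classes; the $L\to\infty$ interchange is routine given the convergence established later.
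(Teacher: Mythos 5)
Your proposal is correct and follows essentially the same route as the paper's proof: differentiating the chiral gauge symmetry \eqref{gaugeinvarianceeqn} at black and at white vertices gives exactly the paper's local identities \eqref{eq0}--\eqref{eq2}, which are then summed over the vertex types of a cell, combined, and Fourier transformed via \eqref{eq:convFourier}. The only difference is bookkeeping in the final step: the paper subtracts the two identities in position space, so intra-cell edges cancel exactly and the surviving boundary edges in $E_{\partial B_x}$ are paired geometrically (Fig. \ref{fig:9}) before transforming, whereas you transform each identity separately and subtract in momentum space, where intra-cell (in particular non-planar) edges drop out through the vanishing factor $e^{-ip\cdot v(e)}-1$ at $v(e)=0$ and $\mathcal E$ is identified as a complete set of representatives of the cross-cell translation classes --- an equivalent argument.
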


\begin{proof}
We start by differentiating both sides of the gauge invariance equation \eqref{gaugeinvarianceeqn}: fix $ {\bf x}=(x,\ell) \in {\bf \Lambda} $, differentiate first with respect to $\alpha^+_{\bf x}$ and set $\alpha \equiv 0$:
\begin{equation}\label{eq0}
	1=\sum_{\substack{e =(b,w) \in E_L \\ {\bf x}(b)={\bf x}}} \partial_{A_e} {W}_L(A,\phi)+\phi^-_{\bf x}\partial_{\phi^-_{\bf x}}{W}_L(A,\phi)
\end{equation}
where ${\bf x}(b)=(x(b),\ell(b))$ is the coordinate of the
black endpoint $b$ of the edge $e$. The above sum thus contains as
many terms as the number of edges incident to the black site of coordinate $
{\bf x}$, i.e. as the number of elements in $\mathcal J_{\ell(b)}$. Then, differentiate with respect to $\phi^-_{\bf z}$ and
$\phi^+_{\bf y}$ and set $A \equiv \phi \equiv 0$:
\begin{equation}\label{eq1}
\sum_{\substack{e=(b,w) \in E_L \\ {\bf x}(b)={\bf x}}} g_L(e;{\bf y};{\bf z})+\delta_{{\bf x, z}}g_L(\emptyset;{\bf y};{\bf z})=0.
\end{equation} 
Repeating the same procedure but differentiating first with respect to $\alpha^-_{\bf x}$ rather than $\alpha^+_{\bf x}$, 
we obtain
\begin{align}
	\sum_{\substack{e=(b,w) \in E_L \\ {\bf x}(w)={\bf x}}} g_{L}(e;{\bf  y};{\bf z})+\delta_{{\bf x, y}} g_L(\emptyset;{\bf y};{\bf z})=0  \label{eq2}
\end{align}
where ${\bf x}(w)$ is the coordinate of the white vertex of
$e$.  Now we sum both \eqref{eq2} and \eqref{eq1} over
$\ell \in \mathcal{I}$ (the type of the vertex ${\bf x}$) with the cell index $x$
fixed; then we take  the difference of the two expressions
thus obtained and we send $L \to \infty$. When taking the difference,
the contribution from edges whose endpoints both belong to cell $B_x$
cancel and we are left with 
\begin{align}
  \label{eq:330}
\sum_{e=(x',j,\ell)\in E_{\partial B_x}} (-1)^{\delta_{x,x'}} G^{(2,1)}_{j,\ell,\ell',\ell'}(x',y,z)=(\delta_{x,z}-\delta_{x,y})G^{(2)}_{\ell',\ell''}(y,z),
\end{align}
where we used the notation in \eqref{notation}, and we denoted by $E_{\partial B_x}$ the set of edges of $E^0$ having exactly one endpoint in the cell $B_x$.
Note that in the first sum, in writing $e=(x',j,\ell)$, we used 
the usual labeling of the edge $e$ in terms of the coordinates $(x',\ell)$ of its black site and of the label $j\in \mathcal J_\ell$. Note also that, 
if $e=(x',j,\ell)\in E_{\partial B_x}$, then $x'$ is either $x$ or $x\pm(0,1),x\pm(1,0)$. See Figure \ref{fig:9}.
 \begin{figure}[H]
\centering
\includegraphics[scale=1]{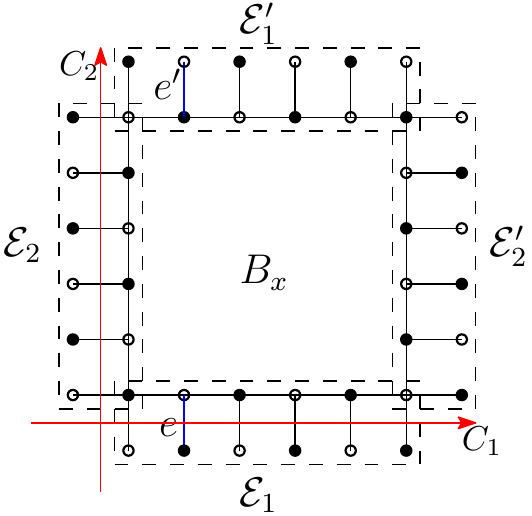}
\caption{The cell $B_x$ (only vertices on its boundary are drawn) together with the edges in $E_{\partial B_x}=\mathcal E_{1,x}\cup\mathcal E_{2,x}\cup \mathcal E'_{1,x}\cup \mathcal E'_{2,x}$. To each edge $e$ in $\mathcal E_{1,x}$ (resp. in $\mathcal E_{2,x}$) there corresponds a unique edge $e'$ in $\mathcal E_{1,x}'$ (resp. $\mathcal E_{2,x}'$) whose endpoints are of the same type.} 
  \label{fig:9}
\end{figure}
 Using the last remark in the caption of Fig.\ref{fig:9}, we can rewrite the sum in the left hand side of \eqref{eq:330} as a sum over edges in $\mathcal E_{1,x}\cup \mathcal E_{2,x}$, each term containing the difference of two vertex functions $G^{(2,1)}_{j,\ell,\ell',\ell''}$.  
Passing to Fourier space via \eqref{eq:convFourier}, we obtain \eqref{FspWid}, as desired. 
\end{proof}

\begin{remark}\label{relvsigma} For later reference, note that, if $e$ crosses the path $C_1$ (resp. $C_2$) of Figure \ref{fig:9}, i.e., if $e\in \mathcal E_{1,x}$ (resp. $e\in\mathcal E_{2,x}$), then, for any $p=(p_1,p_2)\in\mathbb R^2$
and $v(e)$ defined as in the statement of Proposition \ref{prop:WI}, 
\begin{equation}
    \label{eq:identita}
    p \cdot v(e)=
    \begin{cases}
      -p_2 \sigma_e& \text{ if $e$ crosses $C_1$}\\
      +p_1 \sigma_e& \text{ if $e$ crosses $C_2$},
    \end{cases}
  \end{equation}
  with $\sigma_e=\pm1$ the same sign appearing in the definition \eqref{heightfunction} of height function.
\end{remark}

\section{Proof of Theorem \ref{mainthrm}} \label{sec:4}

One important conclusion  of the previous section is Proposition \ref{prop:WI}, which states the validity of exact identities among the (thermodynamic limit of) correlation functions of the dimer model. In this section we combine these exact identities with a result on the large-distance asymptotics of the correlation functions, which includes the statement of Theorem \ref{thm:1}, and use them to prove 
Theorem \ref{mainthrm}. The required fine asymptotics of the correlation functions is summarized in the following proposition, 
whose proof is discussed in Section \ref{lasezionepesante}. 

\begin{proposition}\label{asimptrelprpgtr} There exists $\lambda_0>0$ such that, for $|\lambda|\le \lambda_0$, the interacting dimer-dimer correlation for $x\neq y$ 
can be represented in the following form:
\begin{eqnarray}\label{hh110}
	&&G^{(0,2)}_{j,j',\ell,\ell'}(x,y) = \frac{1}{4\pi^2 Z^2(1-\tau^2)}\sum_{\omega=\pm} \frac{K^{(1)}_{\omega,j,\ell} K^{(1)}_{\omega,j',\ell'}}{(\phi_\omega(x-y))^2}\\
	&&\qquad + \frac{B}{4\pi^2} \sum_{\omega=\pm} \frac{K^{(2)}_{\omega,j,\ell}  K^{(2)}_{-\omega,j',\ell'}}{| \phi_\omega(x-y)|^{2(1-\tau)/(1+\tau)}} e^{2i\, p^\omega\cdot(x-y)} 
	+R_{j,j',\ell,\ell'}(x,y)\nonumber\;,
\end{eqnarray}
where: $\lambda\mapsto Z$, $\lambda\mapsto\tau$ and $\lambda \mapsto B$ are real-valued analytic functions satisfying $Z=1+O(\lambda)$, $\tau=O(\lambda)$ and $B=1+O(\lambda)$;
$\phi_\omega(x):=\omega (\beta_\omega x_1-\alpha_\omega x_2)$ where $\lambda\mapsto \alpha_\omega,\lambda\mapsto \beta_\omega$ are complex-valued 
analytic functions satisfying $\overline{\alpha_+}=- \alpha_{-},  \overline{\beta_+}=- \beta_{-}$; $\lambda\mapsto K^{(i)}_{\omega,j,\ell}$ with $i\in\{1,2\}$ 
are complex-valued analytic functions of $\lambda$ satisfying 
$ K^{(i)}_{+,j,\ell}=\overline{K^{(i)}_{-,j,\ell}}$; $\lambda\mapsto p^\omega$ are analytic functions with values in 
$[-\pi,\pi]^2$ for $\lambda$ real, satisfying $p^+=-p^-$ and $2p^+\neq 0$ mod $(2\pi,2\pi)$; the correction term $R_{j,j',\ell,\ell'}(x,y)$ is translational invariant and satisfies $|R_{j,j',\ell,\ell'}(x,0)|\le C |x|^{-1+C|\lambda|}$ for some $C>0$.

Moreover, there exists an additional set of complex-valued analytic function  $\lambda \mapsto I_{\omega,\ell,\ell'},\omega=\pm1,\ell,\ell'\in\mathcal I,$ such that the Fourier transforms 
of the interacting propagator and of the interacting vertex function satisfy:  
\begin{equation} \hat G^{(2)}_{\ell,\ell'}(k+ p^\omega) \stackrel{ k \to 0}=  I_{\omega,\ell,\ell'} \hat G^{(2)}_{R,\omega}(k)[1+O(|k|^{1/2})],\label{h10ab}\end{equation}
and, if $0<\mathfrak c\le |p|,|k|,|k+p|\le 2\mathfrak c$,
\begin{equation}\label{h10a}
	\hat G^{(2,1)}_{j,\ell,\ell',\ell''}(k+ p^\omega, p)
	\stackrel{\mathfrak c\to0}= -\sum_{\omega'=\pm} K^{(1)}_{\omega',j,\ell}  I_{\omega,\ell',\ell''} \hat G^{(2,1)}_{R,\omega',\omega}(k,p)[1+O(\mathfrak c^{1/2})]\;,
\end{equation}
where $K^{(1)}_{\omega,j,\ell}$ is the same as in \eqref{hh110} and $\hat G^{(2)}_{R,\omega}(k)$,$\hat G^{(2,1)}_{R,\omega,\omega'}(k,p)$ are two functions satisfying, for $D_\omega(p)=\alpha_\omega p_1+\beta_\omega p_2$, 
\begin{equation}\label{h11}
\sum_{\omega'=\pm} D_{\omega'}(p)\hat G^{(2,1)}_{R,\omega',\o}(k,p)=
\frac1{Z(1-\tau)} \Big[\hat G^{(2)}_{R,\o}(k) - \hat G^{(2)}_{R,\o}(k+p)\Big]\Big(1+O(\lambda |p|)\Big)\;,\end{equation}
with $Z, \tau$ the same as  in \eqref{hh110}, and 
\begin{align}\label{symmrefprop}
\hat G^{(2,1)}_{R,-\omega,\omega}(k,p)=\tau \frac{D_{\omega}(p)}{D_{-\omega}(p)}\hat G^{(2,1)}_{R,\omega,\omega}(k,p)\Big(1+O(|p|)\Big).
\end{align}
Finally, $\hat G^{(2)}_{R,\omega}(k) \sim c_1|k|^{-1+O(\lambda^2)}$ as
$k\to 0$, and, if $0<\mathfrak c\le |p|,|k|,|k+p|\le 2\mathfrak c$,
$\hat G^{(2,1)}_{R,\omega,\omega'}(k,p) \sim c_2
\mathfrak{c}^{-2+O(\lambda^2)}$ as $\mathfrak{c} \to 0$, for two
suitable non-zero constants $c_1,c_2$.
\end{proposition}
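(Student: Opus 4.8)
The plan is to derive the fine asymptotics of Proposition \ref{asimptrelprpgtr} from a rigorous multiscale (Renormalization Group) analysis of the non-Gaussian Grassmann integrals furnished by Proposition \ref{prop:1}, following the strategy of \cite{GMTaihp,GMT20} but with a crucial preliminary step dictated by the multi-component nature of the field. First I would pass to the thermodynamic limit and to Fourier space, where the quadratic part of the action is governed by the $|\mathcal I|\times|\mathcal I|$ matrix $M(k)$ of \eqref{eq:Mni} and the free propagator is $M(k)^{-1}=\adj M(k)/\mu(k)$. By Assumption \ref{ass:1}, $\mu=\det M$ vanishes only at the two simple zeros $p_0^\pm$, so $M(k)^{-1}$ is singular only in two small neighbourhoods of $p_0^\pm$ and is smooth and bounded elsewhere. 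This is the analytic signal that, among the $\ge 16$ components of the Grassmann field, only two combinations are massless.

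The first, and genuinely new, step is therefore the block-diagonalization of Section \ref{sec:5.2}: I would construct a smooth, $k$-dependent change of Grassmann variables that splits the field into two ``quasi-particle'' components, whose inverse propagator vanishes linearly at $p_0^\omega$ (the rank-one structure \eqref{krondecM} of $\adj M(p_0^\omega)$ selects exactly these directions), and a remaining set of \emph{massive} components whose dispersion stays bounded away from zero throughout the Brillouin zone. Near $p_0^\omega$ the light propagator has the relativistic form $\sim 1/(\alpha^0_\omega (k-p_0^\omega)_1+\beta^0_\omega(k-p_0^\omega)_2)$, with $\alpha^0_\omega,\beta^0_\omega$ the gradients \eqref{eq:alphabeta}, not collinear by \eqref{eq:alphabeta2}; this is precisely the structure of a two-chirality lattice Luttinger model. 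The massive modes are then integrated in a single Renormalization Group step by a convergent fermionic cluster expansion, using Gram--Hadamard bounds on the associated determinants: since the integrated propagator decays on scale $O(1)$ and the interaction carries a factor $\lambda$, the expansion converges for $|\lambda|$ small and produces an effective action for the two massless fields, with local kernels that are analytic in $\lambda$.

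At this point the model coincides, up to finite-range analytic corrections, with the interacting dimer model already analysed in \cite{GMT20}, and I would run the infrared multiscale analysis essentially verbatim (Section \ref{sec:5.4}): decompose the light propagator as $\sum_{h\le 0}\hat g^{(h)}$ with $\hat g^{(h)}$ supported on $|k-p_0^\omega|\sim 2^h$, integrate the scales iteratively, and follow the flow of the running couplings---the wave-function renormalization $Z_h$, the effective velocities (ratios $\alpha_\omega/\beta_\omega$) and the marginal four-fermion coupling. As in \cite{GMT20} and the constructive analysis of Benfatto, Mastropietro and Falco \cite{BFM09a,BFM09b,BM04,BM05,BM10,BM11}, the beta function of the marginal coupling is \emph{asymptotically vanishing}, so the flow converges and the running couplings stay small and analytic in $\lambda$; the anomalous dimensions are read off from the power-law scaling of $Z_h$, and the dressed constants $Z,\tau,B,\alpha_\omega,\beta_\omega,p^\omega,K^{(i)}_{\omega,j,\ell},I_{\omega,\ell,\ell'}$ emerge as convergent series in $\lambda$. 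The reality and conjugation relations (e.g. $\overline{\alpha_+}=-\alpha_-$, $K^{(i)}_{+,j,\ell}=\overline{K^{(i)}_{-,j,\ell}}$) descend from the lattice symmetry \eqref{eq:symm0}, exactly as \eqref{fermippvel2} does at $\lambda=0$.

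The asymptotic formulas \eqref{hh110}, \eqref{h10ab}, \eqref{h10a} and, above all, the exact relations \eqref{h11} and \eqref{symmrefprop} I would obtain by comparison with the solvable continuum reference model of \cite{BFM09a,BFM09b,BM02,BM04,BM05,BM10,BM11,BFM14}: once the two flows are matched through an appropriate choice of the reference bare parameters, the lattice correlations converge in the scaling limit to those of the reference model, which are known in closed form and satisfy the emergent chiral Ward identities \eqref{h11}, \eqref{symmrefprop}. The anomalous exponent of the oscillating term, $(1-\tau)/(1+\tau)=1+O(\lambda)$, coincides with the exponent $\nu$ of Theorem \ref{thm:1}, while \eqref{hh110} follows by writing $G^{(0,2)}$ as the appropriate contraction of two light propagators dressed by the vertex functions. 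I expect the main obstacle to be the block-diagonalization together with the clean integration of the massive modes: one must show that the residual field is genuinely massive \emph{uniformly} in the Brillouin zone (so that no spurious infrared singularity survives the first step), that the effective two-component model falls \emph{exactly} into the Luttinger universality class treated in \cite{GMT20}, and that all of the above constants retain their analyticity after this non-trivial multi-component reduction.
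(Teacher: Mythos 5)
Your proposal is correct and follows essentially the same route as the paper: identification of the two massless directions from the simple-zero/rank-one structure of $M(k)$ at $p_0^\pm$, a Schur-complement-type block-diagonalization separating a scalar critical field from massive components, one-step integration of the massive (and ultraviolet) modes via convergent fermionic cluster expansion with Gram--Hadamard bounds, reduction to the effective model of \cite{GMT20}, and infrared multiscale analysis with comparison to the solvable continuum reference model to extract \eqref{hh110}--\eqref{symmrefprop}. The only inessential difference is ordering — the paper integrates the ultraviolet modes \emph{before} performing the rotation, so that the $k$-dependent block-diagonalization is only ever needed on the support of the cutoffs near $p_0^\omega$, where $W_\omega(k)$ is invertible, which is exactly the point you correctly flag as requiring care.
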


\medskip

A few comments are in order. First of all, the statement of Theorem
\ref{thm:1}, \eqref{eq:asy}, follows from \eqref{hh110}, which is just
a way to rewrite it: it is enough to identify $K_{\omega,j,\ell}$ with
$(2\pi Z\sqrt{1-\tau^2})^{-1}K^{(1)}_{\omega,j,\ell}$,
$H_{\omega,j,\ell}$ with $(\sqrt{B}/2\pi)K^{(2)}_{\omega,j,\ell}$, and
$\nu$ with $(1-\tau)/(1+\tau)$.

Moreover, we emphasize that Proposition \ref{asimptrelprpgtr} is the analogue of
\cite[Prop.2]{GMT20} and its proof, discussed in the next section, is
a generalization of the corresponding one. The main ideas behind the
proof remain the same: in order to evaluate the correlation functions
of the non-planar dimer model we start from the Grassmann
representation of the generating function,
\eqref{gener}, and we compute it via an iterative
integration procedure, in which we first integrate out the degrees of
freedom associated with a length scale $1$, i.e., the scale of the
lattice, then those on length scales $2, 4, \ldots, 2^{-h}, \ldots$,
with $h<0$. The output of the integration of the first $|h|$ steps of
this iterative procedure can be written as a Grassmann integral
similar to the original one, with the important difference that the
`bare potential' $V(\psi,A)+(\psi,\phi)$ is replaced by an effective
one, $V^{(h)}(\psi,A,\phi)$, that, after appropriate rescaling,
converges to a non-trivial infrared fixed point as $h\to-\infty$. The
large-distance asymptotics of the correlation functions of the dimer
model can thus be computed in terms of those of such an infrared
fixed-point theory, or of those of any other model with the same fixed
point (i.e., of any other model in the same \textit{universality
  class}, the \textit{Luttinger} universality class).  The reference
model we choose for this asymptotic comparison is described in
\cite[Section 4]{GMT20}, which we refer the reader to for additional
details. It is very similar to the Luttinger model, and differs from
it just for the choice of the quartic interaction: it describes a
system of Euclidean chiral fermions in $\mathbb R^2$ (modeled by
Grassmann fields denoted $\psi^\pm_{x,\omega}$, with $x\in\mathbb R^2$
the space label and $\omega\in\{+,-\}$ the chirality label), with
relativistic propagator and a non-local (in both space dimensions,
contrary to the case of the Luttinger model) density-density
interaction\footnote{\label{foot.mass}By `density' of fermions with
  chirality $\omega$ we mean the quadratic monomial
  $\psi^+_{x,\omega}\psi^-_{x,\omega}$; the reference model we
  consider has an interaction coupling the density of fermions with
  chirality $+$ with that of fermions with opposite chirality, see
  \cite[Eq.(4.11)]{GMT20}. For later reference, we also introduce the
  notion of fermionic `mass' of chirality $\omega$, associated with
  the off-diagonal (in the chirality index) quadratic monomial
  $\psi^+_{x,\omega}\psi^-_{x,-\omega}$.}.
The bare parameters of the reference model, in particular the strength of its density-density interaction, are chosen in such a way that its infrared fixed point coincides with the one of our dimer model of interest. 
The remarkable feature of the reference model is that, contrary to our dimer model, it is exactly solvable in a very strong sense: its correlation functions can all be computed in closed form. For our purposes, the relevant correlations are 
those denoted\footnote{The label $R$ stands for `reference' or `relativistic'.} $G^{(2,1)}_{R,\omega,\omega'}$ 
(the vertex function of the reference model, corresponding to the correlation of the density of chirality $\omega$ with a pair of Grassmann fields of chirality $\omega'$), 
$G^{(2)}_{R,\omega}$
(the interacting propagator, corresponding to the correlation between two Grassmann fields of chirality $\omega$), $S^{(1,1)}_{R,\omega,\omega}$ (the density-density correlation between two densities with the same chirality $\omega$) and $S^{(2,2)}_{R,\omega,-\omega}$ (the mass-mass correlation between two masses -- see footnote \ref{foot.mass} -- of opposite chiralities): these are the correlations, in terms of which  the asymptotics of the 
vertex function, interacting propagator and dimer-dimer correlation of our dimer model can be expressed.

\begin{remark}
The connection between the interacting propagator of the dimer model
and that of the reference model can be read from \eqref{h10ab};
similarly, the one between the vertex functions of the two models can
be read from \eqref{h10a}. Moreover, in view of the asymptotics of
$S^{(1,1)}_{R,\omega,\omega}$ and of $S^{(2,2)}_{R,\omega,-\omega}$, see
\cite[Eqs.(4.17) and (4.19)]{GMT20}, \eqref{hh110} can be rewritten as
\begin{equation}\label{rel2p}\sum_{\omega=\pm} \big[K^{(1)}_{\omega,j,\ell}
K^{(1)}_{\omega,j',\ell'}S^{(1,1)}_{R,\omega,\omega}(x,y)+K^{(2)}_{\omega,j,\ell}
K^{(2)}_{-\omega,j',\ell'}S^{(2,2)}_{R,\omega,-\omega}(x,y)e^{2ip^\omega\cdot(x-y)} \big]\end{equation}
plus a faster decaying remainder, which explains the connection between the
dimer-dimer correlation and the density-density and mass-mass
correlations of the reference model.
\end{remark}

The fact that the infrared behavior of the dimer model discussed in
this paper can be described via the same reference model used for the
dimer model in \cite{GMT20} is a priori non-obvious. In fact, the
Grassmann representation of our non-planar dimer model involves
Grassmann fields labelled by $x\in\Lambda$ and
$\ell\in\mathcal I=\{1,\ldots,m^2/2\}$: therefore, one could expect
that the infrared behavior of the system is described in terms of a
reference model involving fields labelled by an index
$\ell\in\mathcal I$. This, a priori, could completely change at a
qualitative level the nature of the infrared behavior of the system,
which crucially depends on the number of mutually interacting massless
fermionic fields. For instance, it is well known that 2D chiral
fermions with an additional spin degree of freedom (which is the case
of interest for describing the infrared behavior of the 1D Hubbard
model), behaves differently, depending on the \textit{sign} of the
density-density interaction: for repulsive interactions it behaves
qualitatively in the same way as the Luttinger model \cite{BFM14},
while for attractive interaction the model dynamically generates a
mass and enters a `Mott-insulator' phase \cite{LiWu}.  In our setting,
remarkably, despite the fact that the number of Grassmann fields used
to effectively describe the model is large for a large elementary cell
(and, in particular, is always larger than 1), the number of massless
fields is the same as in the case of \cite{GMT20}: in fact, out of the
$m^2/2$ fields $\psi^\pm_{(x,\ell)}$ with $\ell\in\{1,\ldots,m^2/2\}$,
all but one of them are \textit{massive}, i.e., their correlations
decay exponentially to zero at large distances, with rate proportional
to the inverse lattice scaling ({this is a direct consequence of the fact that, as proven in \cite{KOS}, the characteristic polynomial $\mu$ has only two zeros}).  Therefore, for the purpose
of computing the generating function, we can integrate out the massive fields in one
single step of the iterative integration procedure, after which we are
left with an effective theory of a single massless Grassmann field
with chirality index $\omega$ associated with the two zeros of $\mu$,
see \eqref{eq:mu}, completely analogous to the one studied in
\cite[Section 6]{GMT20}. See the next section for details.

\medskip

While the proof of the fine asymptotic result summarized in
Proposition \ref{asimptrelprpgtr} is hard, and based on the
sophisticated procedure just described, the proof of Theorem
\ref{mainthrm} \textit{given} Proposition \ref{asimptrelprpgtr} is
relatively easy, and close to the analogous proof discussed in
\cite[Section 5]{GMT20}. We provide it here. Let us start with one definition. Given the  face
$\eta_0\in \bar F$ ($\bar F$ and $\eta_x,x=(x_1,x_2)$ were defined in Section \ref{sec:results}, just before Theorem \ref{mainthrm}),
let $\mathcal E_{1,0}$ (resp. $\mathcal E_{2,0}$), be the set of vertical (resp. horizontal) edges crossed
by the horizontal (resp. vertical) path $C_{\eta_0\to\eta'}$ connecting $\eta_0$ to the face $\eta'\in\bar F$ given by $\eta'=\eta_{(1,0)}$  (resp. $\eta'=\eta_{(0,1)}$). See Fig.\ref{fig:9},
where the same paths and edge sets around the cell $B_x$ rather than $B_0$ are shown. 
For $e\in \mathcal E_{q,0}$, $q=1,2$, we let $(x(e),\ell(e))$ denote the coordinates of its black vertex  and $j(e)\in \mathcal J_{\ell(e)}$ the type of the edge.  We also recall from Section \ref{sec:results} that $\sigma_e=\pm1$
is defined  in \eqref{heightfunction}.
\begin{proposition}
\label{th:3}
For $q=1,2$ and $\omega=\pm$, one has
\begin{equation}\label{haldkadscaling}
\sum_{e \in \mathcal E_{q,0}}\sigma_e \frac{K^{(1)}_{\omega,j(e),\ell(e)}}{Z\sqrt{1-\tau^2}}=-i\omega\sqrt{\nu}\, {\mathrm{d}}_q {\phi}_\omega
\end{equation}
where $\nu=(1-\tau)/(1+\tau)$, and 
\begin{eqnarray}
  \label{eq:D12}\begin{split}
& {\mathrm{d}}_1 \phi_\omega := \phi_\omega(x + (1,0)))-\phi_\omega(x)=\omega\beta_\omega,\\
& {\mathrm{d}}_2 \phi_\omega := \phi_\omega(x + (0,1)))-\phi_\omega(x)=-\omega\alpha_\omega.  \end{split}
\end{eqnarray}
\end{proposition}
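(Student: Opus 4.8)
The plan is to prove \eqref{haldkadscaling} by inserting the fine asymptotics of Proposition \ref{asimptrelprpgtr} into the \emph{exact} lattice Ward Identity of Proposition \ref{prop:WI}, and then matching the result against the emergent (continuum) Ward Identity \eqref{h11} of the reference model. Throughout, write $S_{q,\omega}:=\sum_{e\in\mathcal E_{q,0}}\sigma_e\,K^{(1)}_{\omega,j(e),\ell(e)}$, so that, using $Z\sqrt{1-\tau^2}\,\sqrt\nu=Z(1-\tau)$ together with \eqref{eq:D12}, the statement \eqref{haldkadscaling} is equivalent to the two scalar identities $S_{1,\omega}=-iZ(1-\tau)\beta_\omega$ and $S_{2,\omega}=iZ(1-\tau)\alpha_\omega$. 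As a first step I would shift $k\mapsto k+p^\omega$ in \eqref{FspWid} and send $p,k\to0$ with $\mathfrak c\le|p|,|k|,|k+p|\le 2\mathfrak c$. On the right-hand side \eqref{h10ab} gives $\hat G^{(2)}_{\ell',\ell''}(k+p^\omega+p)-\hat G^{(2)}_{\ell',\ell''}(k+p^\omega)=I_{\omega,\ell',\ell''}\,[\hat G^{(2)}_{R,\omega}(k+p)-\hat G^{(2)}_{R,\omega}(k)](1+o(1))$, while on the left-hand side \eqref{h10a} replaces each vertex function by $-\sum_{\omega'}K^{(1)}_{\omega',j(e),\ell(e)}I_{\omega,\ell',\ell''}\hat G^{(2,1)}_{R,\omega',\omega}(k,p)(1+o(1))$; the common prefactor $I_{\omega,\ell',\ell''}$ cancels. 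Expanding $e^{-ip\cdot v(e)}-1=-ip\cdot v(e)+O(|p|^2)$ and using Remark \ref{relvsigma}, i.e. \eqref{eq:identita}, to turn $p\cdot v(e)$ into $-p_2\sigma_e$ for $e\in\mathcal E_{1,0}$ and $+p_1\sigma_e$ for $e\in\mathcal E_{2,0}$, the edge sum collapses onto the constants $S_{q,\omega'}$, and to leading order the lattice identity becomes
\[
-\sum_{\omega'}\hat G^{(2,1)}_{R,\omega',\omega}(k,p)\,\big(ip_2 S_{1,\omega'}-ip_1 S_{2,\omega'}\big)=\hat G^{(2)}_{R,\omega}(k+p)-\hat G^{(2)}_{R,\omega}(k).
\]

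Next I would eliminate the propagator difference through the continuum Ward Identity \eqref{h11}, namely $\sum_{\omega'}D_{\omega'}(p)\hat G^{(2,1)}_{R,\omega',\omega}(k,p)=\frac1{Z(1-\tau)}[\hat G^{(2)}_{R,\omega}(k)-\hat G^{(2)}_{R,\omega}(k+p)](1+o(1))$, with $D_{\omega'}(p)=\alpha_{\omega'}p_1+\beta_{\omega'}p_2$. Substituting yields the single relation
\[
\sum_{\omega'}\hat G^{(2,1)}_{R,\omega',\omega}(k,p)\,\big[\,ip_2 S_{1,\omega'}-ip_1 S_{2,\omega'}-Z(1-\tau)D_{\omega'}(p)\,\big]=0,
\]
which I abbreviate as $\sum_{\omega'}\hat G^{(2,1)}_{R,\omega',\omega}(k,p)\,L_{\omega'}(p)=0$, each $L_{\omega'}(p)=a_{\omega'}p_1+b_{\omega'}p_2$ being linear in $p$ with $a_{\omega'}=-iS_{2,\omega'}-Z(1-\tau)\alpha_{\omega'}$ and $b_{\omega'}=iS_{1,\omega'}-Z(1-\tau)\beta_{\omega'}$. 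One cannot read off $L_{\omega'}\equiv0$ directly, because $\hat G^{(2,1)}_{R,+,\omega}$ and $\hat G^{(2,1)}_{R,-,\omega}$ are \emph{not} independent functions of $p$.

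This is resolved by the emergent chiral symmetry \eqref{symmrefprop}, $\hat G^{(2,1)}_{R,-\omega,\omega}=\tau\,\tfrac{D_\omega(p)}{D_{-\omega}(p)}\hat G^{(2,1)}_{R,\omega,\omega}(1+o(1))$. Factoring out $\hat G^{(2,1)}_{R,\omega,\omega}$ (nonzero by the last sentence of Proposition \ref{asimptrelprpgtr}) and clearing $D_{-\omega}(p)$ turns the above into a genuine homogeneous polynomial identity in $p$,
\[
D_{-\omega}(p)\,L_\omega(p)+\tau\,D_\omega(p)\,L_{-\omega}(p)=0 ,
\]
valid for each $\omega=\pm$. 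Matching the $p_1^2$ and $p_2^2$ coefficients for both chiralities gives the two $2\times2$ systems $\alpha_{-\omega}a_\omega+\tau\alpha_\omega a_{-\omega}=0$ and $\beta_{-\omega}b_\omega+\tau\beta_\omega b_{-\omega}=0$; since $1-\tau^2\neq0$ and $\alpha_\omega,\beta_\omega\neq0$, these force $a_{\omega'}=b_{\omega'}=0$ for every $\omega'$, i.e. exactly $S_{2,\omega'}=iZ(1-\tau)\alpha_{\omega'}$ and $S_{1,\omega'}=-iZ(1-\tau)\beta_{\omega'}$ (the $p_1p_2$ coefficients are then automatically satisfied). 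Dividing by $Z\sqrt{1-\tau^2}$ and recalling $\nu=(1-\tau)/(1+\tau)$ and \eqref{eq:D12} produces \eqref{haldkadscaling}.

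The main obstacle is precisely the decoupling in the third step: the matching would be immediate if the two reference vertex functions were independent, but they are tied by \eqref{symmrefprop}, so the $p$-dependence must be made fully explicit before coefficients can be extracted. The crucial structural point—and the reason the Kadanoff–Haldane relation holds with the sharp amplitude $\sqrt\nu$ rather than with some unconstrained constant—is that using this emergent symmetry to separate the two chiralities, combined with the non-degeneracy $1-\tau^2\neq0$, is what pins down each $S_{q,\omega}$ individually instead of only a linear combination. Some care is also required to verify that the $o(1)$ corrections in \eqref{h10ab}, \eqref{h10a}, \eqref{h11}, \eqref{symmrefprop}, as well as the $O(|p|^2)$ term from expanding $e^{-ip\cdot v(e)}$, are genuinely subleading, so that the final matching is among the $\mathfrak c$-independent constants $S_{q,\omega},\alpha_\omega,\beta_\omega,Z,\tau$; this is routine, since the leading relations are homogeneous in $p$.
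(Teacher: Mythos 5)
Your proposal is correct and follows essentially the same route as the paper's proof: you insert \eqref{h10ab}--\eqref{h10a} into the exact Ward Identity \eqref{FspWid} shifted by $p^\omega$, compare with \eqref{h11}, decouple the two chiralities via \eqref{symmrefprop}, and solve the resulting homogeneous polynomial identity in $p$ (your relation $D_{-\omega}(p)L_\omega(p)+\tau D_\omega(p)L_{-\omega}(p)=0$ expands to exactly the paper's equation $\mathcal D_\omega(p) D_{-\omega}(p)+\tau\mathcal D_{-\omega}(p) D_\omega(p)=Z(1-\tau^2)D_\omega(p) D_{-\omega}(p)$), with the $2\times2$ systems invertible because $1-\tau^2\neq0$ and $\alpha_\omega,\beta_\omega\neq0$. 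The only cosmetic differences are that you convert $v(e)$ into $\sigma_e$ via Remark \ref{relvsigma} at the beginning rather than at the end, and that you extract the $p_1^2,p_2^2$ coefficients instead of setting $p_2=0$ and then $p_1=0$, which amounts to the same computation.
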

\begin{proof}
  Start with the Ward Identity in Fourier space \eqref{FspWid}
  evaluated for $k$ replaced by $k+ p^\omega$ and substitute
  \eqref{h10ab} and \eqref{h10a} in it for $\mathfrak{c} \to
  0$. Recalling that $0<\mathfrak{c}<|k|,|p|,|k+p|<2\mathfrak{c}$ we obtain for $\mathfrak c$ small
\begin{align}
\sum_{\omega'=\pm} \mathcal{D}_{\omega'}(p)G^{(2,1)}_{R,\omega',\omega}(k,p)=(G^{(2)}_{R,\omega}(k)-G^{(2)}_{R,\omega}(k+p))(1+O(\mathfrak{c}^{1/2}))
\end{align}
where $\mathcal{D}_{\omega}(p):=-i\sum_{e \in \mathcal{E}}
K^{(1)}_{\omega,j(e),\ell(e)}p \cdot v(e)$, with $\mathcal E=\mathcal E_{1,0}\cup \mathcal E_{2,0}$ the set of edges defined in Proposition \ref{prop:WI}.
Now comparing the above relation
with the identity \eqref{h11}, by using \eqref{symmrefprop} and by
identifying terms at dominant order for  $|p|$ small we obtain (recall the definition of $D_\omega(p)$ right before \eqref{h11}):
\begin{align}
\mathcal{D}_\omega(p) D_{-\omega}(p)+\tau \mathcal{D}_{-\omega}(p)  D_{\omega}(p)=Z(1-\tau^2)D_{\omega}(p) D_{-\omega}(p).
\end{align}
Letting $p=(p_1,p_2), v(e)=(v_1(e),v_2(e))$, 
imposing $p_2=0,p_1 \neq 0$ first and $p_1=0,p_2 \neq 0$ then, we find a linear system for the coefficients $-i\sum_{e \in \mathcal{E}} K_{\omega,\ell(e),j(e)} v_q(e)$, for $q=1,2$ and $\omega=\pm$ whose solution is
\begin{equation}\label{sollinsyst}\begin{split}
& \sum_{e \in \mathcal{E}} K^{(1)}_{\omega,j(e),\ell(e)} v_1(e)=iZ(1-\tau) \alpha_\omega,\\
& \sum_{e \in \mathcal{E}}  K^{(1)}_{\omega,j(e),\ell(e)} v_2(e)=iZ(1-\tau)\beta_\omega.\end{split}
\end{equation}
Note that, by the very definition of $\mathcal E=\mathcal E_{1,0}\cup \mathcal E_{2,0}$, if $e\in \mathcal E$, then $v_1(e)\neq 0$ iff
$e\in \mathcal E_{2,0}$, while $v_2(e)\ne 0 $ iff $e\in \mathcal E_{1,0}$. 
Recall also the 
relation between $v(e)$ and $\sigma_e$ outlined in Remark
\ref{relvsigma}: in view of this, \eqref{sollinsyst} is equivalent to 
\begin{align}
\sum_{e \in \mathcal E_{1,0}} \frac{K^{(1)}_{\omega,j(e),\ell(e)}}{Z\sqrt{1-\tau^2}} \sigma_e=-i\sqrt{\frac{1-\tau}{1+\tau}}\,\beta_\omega= -i\omega \sqrt{\nu}\, {\mathrm{d}}_1 \phi_\omega \\
  \sum_{e \in \mathcal E_{2,0}}  \frac{K_{\omega,j(e),\ell(e)}}{Z\sqrt{1-\tau^2}} \sigma_e=i\sqrt{\frac{1-\tau}{1+\tau}}\, \alpha_\omega= -i\omega\sqrt{\nu}\,{\mathrm{d}}_2  \phi_\omega, 
\end{align}
where we used $\nu=(1-\tau)/(1+\tau)$ and the definition \eqref{eq:D12}.
\end{proof}

\begin{proof}[Proof of Theorem \ref{mainthrm}]

Given Proposition \ref{th:3}, the proof of Theorem \ref{mainthrm} is
essentially identical to that of \cite[Eq.(2.47)]{GMT20} and of \cite[Proof of (7.26)]{GMTaihp}. Here we give only a sketch and we emphasize only
the role played by the relation \eqref{haldkadscaling} that we have just proven.

First of all, we choose a 
path $C_{\eta_{x^{(1)}}\to\eta_{x^{(2)}}}$
from face $\eta_{x^{(1)}}$ to $\eta_{x^{(2)}}$ that crosses only
edges that join different cells. Since
$\eta_{x^{(1)}},\eta_{x^{(2)}}\in \bar F$, the path
$C_{\eta_{x^{(1)}}\to\eta_{x^{(2)}}}$ visits a 
sequence of faces $\eta_{y^{(1)}},\dots,\eta_{y^{(k)}}\in \bar F$, with $y^{(1)}={x^{(1)}}, y^{(k)}={x^{(2)}}$ and $|y^{(a)}-y^{(a+1)}|=1$. The set of 
 edges crossed by
the path between $\eta_{y^{(a)}}$ and $\eta_{y^{(a+1)}}$, denoted $\mathcal E_{(a)}$, is a translation of either $\mathcal E_{1,0}$ (if $y^{(a+1)}-y^{(a)}$ is horizontal) or $\mathcal E_{2,0}$
(if $y^{(a+1)}-y^{(a)}$ is vertical).
 Similarly, one defines a path
 $C_{\eta_{x^{(3)}}\to\eta_{x^{(4)}}}$ and correspondingly a sequence of faces $\eta_{z^{(1)}},\dots,\eta_{z^{(k')}}\in \bar F$
 and $\mathcal E'_{(a)}$ the set of edges crossed by the path between $\eta_{z^{(a)}}$ and $\eta_{z^{(a+1)}}$.
The two paths can be chosen so that $C_{\eta_{x^{(1)}}\to\eta_{x^{(2)}}}$ is of length $O(|x^{(1)}-x^{(2)}|)$ and  $C_{\eta_{x^{(3)}}\to\eta_{x^{(4)}}}$  is of length $O(|x^{(3)}-x^{(4)}|)$, while they are at mutual distance at least of order $\min(|x^{(i)}-x^{(j)}|,i\ne j)$. See \cite{GMTaihp} for more details.
 
From the definition \eqref{heightfunction} of height function, we see that
\begin{equation}\label{heightcovar}
\mathbb E_\l\left[(h(\eta_{x^{(1)}})-h(\eta_{x^{(2)}}));(h(\eta_{x^{(3)}})-h(\eta_{x^{(4)}}))
\right]
=\sum_{\substack{1\le a< k,\\ 1\le  a'< k'}} 
\sum_{\substack{e\in \mathcal E_{(a)}, \\ e'\in \mathcal E'_{(a')}}} \sigma_e \sigma_{e'} \mathbb{E}_\lambda[\openone_e;\openone_{e'}].
\end{equation}
As a consequence of Proposition \ref{asimptrelprpgtr}, for edges $e,e'$ with black sites of coordinates $(x,\ell),(x',\ell')$ and with orientations $j,j'$, respectively, we have that 
\begin{equation}
  \label{eq:dimdim}\begin{split}
\mathbb{E}_\lambda[\openone_e;\openone_{e'}]&= \sum_{\omega=\pm} \frac{ K^{(1)}_{\omega,j,\ell}}{Z\sqrt{1-\tau^2}}\frac{ K^{(1)}_{\omega,j',\ell'}}{Z\sqrt{1-\tau^2}}\frac1{4\pi^2 ( \phi_\omega(x-x'))^2}\\
&+ \frac{B}{4\pi^2}\sum_{\omega=\pm} \frac{K^{(2)}_{\omega,j,\ell}K^{(2)}_{-\omega,j',\ell'}}{ | \phi_\omega(x-x')|^{2(1-\tau)/(1+\tau)}}e^{2ip^\omega (x-x')}+R_{j,j',\ell,\ell'}(x,x').
\end{split}\end{equation}
At this point we plug this expression into \eqref{heightcovar}. The
oscillating term in \eqref{eq:dimdim}, proportional to $B$, and the
error term $R_{j,\ell,j',\ell}(x,x')$, once summed over $e,e'$,
altogether end up in the error term in \eqref{eq:35l} (see the
analogous argument in \cite[Section 3.2 and 7.3]{GMTaihp}).  As for
the main term involving $K^{(1)}_{\omega,j,\ell}$, we observe that if
we fix $a,a'$, then for
$e \in \mathcal E_{(a)},e'\in \mathcal E'_{(a')}$ we can replace in
\eqref{eq:dimdim} $\phi_\omega(x-x')$ by
$\phi_\omega(y^{(a)}-z^{(a')})$, up to an error term of the same order
as $R_{j,j',\ell,\ell'}(x,x')$, which again contributes to the error
term in \eqref{eq:35l}.  We are thus left with
\begin{equation}\begin{split}
&  \sum_{\omega=\pm}\sum_{\substack{1\le a< k,\\ 1\le a'< k'}}\frac1{4\pi^2 ( \phi_\omega(y^{(a)}-z^{(a')}))^2} \sum_{e\in \mathcal E_{(a)}} \sigma_e \frac{ K_{\omega,j(e),\ell(e)}}{Z\sqrt{1-\tau^2}}\sum_{e'\in \mathcal E'_{(a')}}\sigma_{e'}   \frac{ K_{\omega,j(e'),\ell(e')}}{Z\sqrt{1-\tau^2}}\\
 & =- \nu\sum_{\omega=\pm}\sum_{\substack{1\le a< k,\\ 1\le a'< k'}}\frac{(y^{(a+1)}-y^{(a)})\cdot{\rm d}\phi_\omega\, (z^{(a'+1)}-z^{(a')})\cdot{\rm d}\phi_\omega} {4\pi^2 ( \phi_\omega(y^{(a)}-z^{(a')}))^2}\\
 & =- \frac{\nu}{2\pi^2}\Re \Big[\sum_{\substack{1\le a< k,\\ 1\le a'< k'}}\frac{(y^{(a+1)}-y^{(a)})\cdot{\rm d}\phi_+\, (z^{(a'+1)}-z^{(a')})\cdot{\rm d}\phi_+} { ( \phi_+(y^{(a)}-z^{(a')}))^2}
    \Big]
\end{split}\end{equation}
where in the first step we used Proposition \ref{th:3} and defined ${\rm d}\phi_\omega:=({\rm d}_1\phi_\omega, {\rm d}_2\phi_\omega)$. 
As explained in \cite[Section 5.2]{GMT20} and \cite[Section 7.3]{GMTaihp} (see also \cite[Section 4.4.1]{KOS} in the non-interacting case), this sum equals the integral in the complex plane
\begin{eqnarray}
  - \frac{\nu}{2\pi^2}\Re\int_{\phi_+(x^{(1)})}^{\phi_+(x^{(2)})}dz\int_{\phi_+(x^{(3)})}^{\phi_+(x^{(4)})}dz'\frac1{(z-z')^2}
\end{eqnarray}
(which equals the main term in the r.h.s. of \eqref{eq:35l}), plus an error term (coming from the Riemann approximation)  estimated as in the r.h.s. of \eqref{eq:35l}.
\end{proof}

\section{Proof of Proposition \ref{asimptrelprpgtr}}\label{lasezionepesante}
In this section we give the proof of Proposition
\ref{asimptrelprpgtr} (which immediately implies Theorem \ref{thm:1}, as already commented above), 
via the strategy sketched after its
statement. As explained there, the novelty compared to the proof in
\cite[Section 6]{GMT20} is the reduction to an effective model involving
a \textit{single} Grassmann critical field $\varphi$, of the same form
as the one analyzed in \cite[Section 6]{GMT20}. Therefore, most of this
section will be devoted to the proof of such reduction, which consists
of the following steps. Our starting point is the generating function
of correlations in its Grassmann form, see \eqref{gener}. In \eqref{gener}, we first integrate out the
`ultraviolet' degrees of freedom at the lattice scale, see
Section \ref{sec:5.1} below; the resulting effective theory can be
conveniently formulated in terms of a collection of chiral fields
$\{\psi^\pm_{x,\omega}\}_{x\in \Lambda}^{\omega\in\{+,-\}}$, where
$\psi^\pm_{x,\omega}$ are Grassmann vectors with $|\mathcal I|$
components, which represent fluctuation fields supported in momentum
space close to the unperturbed Fermi points $p_0^\omega$.  Next, we
perform a `rigid rotation' of these Grassmann vectors via a matrix $B$
that is independent of $x$ but may depend on the chirality index
$\omega$; the rotation is chosen so to block-diagonalize the reference
quadratic part of the effective action, in such a way that the
corresponding covariance is the direct sum of two terms, a
one-dimensional one, which is singular at $p^\omega_0$, and a
non-singular one, of dimension $|\mathcal I|-1$; the components
associated with this non-singular $(|\mathcal I|-1)\times(|\mathcal
I|-1)$ block are referred to as the `massive components', which can be
easily integrated out in one step, see Section \ref{sec:5.2} below (this
is the main novel contribution of this section, compared with the
multiscale analysis in \cite{GMT20}).  In Section \ref{sec:5.3} below we
reduce essentially to the setting of \cite{GMT20}, that is, to an
effective theory that involves one single-component ``quasi-particle'' chiral massless
field, which can be analyzed along the same lines as
\cite[Section 6]{GMT20}. Finally, in Section \ref{sec:5.4} we conclude the proof of
Proposition \ref{asimptrelprpgtr}.

\subsection{Integration of the ultraviolet degrees of freedom}\label{sec:5.1}
We intend to compute the generating function \eqref{gener} with $\theta$ boundary conditions. 
We introduce Grassmann variables in Fourier space via the following transformation: 
\begin{align}\label{fourierfields}
\hat \psi^\pm_{k}:= \sum_{x \in \Lambda} e^{\mp ik x} \psi^\pm_{x}, \qquad \psi^\pm_{x}=\frac{1}{L^2}\sum_{k \in \mathcal{P}(\th)} e^{\pm ik x} \hat \psi^\pm_{k},
\end{align}
where we recall that each $\psi_x^\pm$ and each $\hat \psi_k^\pm$ has
$|\mathcal I|$ components and indeed we assume that
$\psi_x^+=(\psi^+_{x,1},\dots,\psi^+_{x,|\mathcal{I}|})$ is a row
vector while similarly $\psi^-_x$ is a column vector (whenever
unnecessary, we shall drop the `color' index $\ell\in\mathcal I$); in
this way the transformation above is performed component-wise.

 For
each $\theta \in\{-1,+1\}^2$, we let $p^\omega_\theta$, $\omega=\pm1$
denote the element of $\mathcal P(\theta)$ that is closest to
$p^\omega_0$\footnote{In the case of more than one momentum at minimum
  distance, any choice of $p_\theta^\pm$ will work. The dependence on
  $L$ of $p_\theta^\pm$ is understood\label{foot:varii}}, we 
  rewrite
 \[\psi^\pm_x=\psi'^{\pm}_{x}+ \Psi^{\pm}_{x}\] with
 \begin{equation}\label{eq:500} \psi'^\pm_{x}=\frac{1}{L^2}\sum_{k \not\in\{p^{+}_\theta,p^-_\theta\}} e^{\pm ik x} \hat \psi^\pm_{k},\quad \Psi^\pm_{x}={\frac1{L^2}}\sum_{k\in\{p^{+}_\theta,p^-_\theta\}}e^{\pm ik x} \hat \psi^\pm_{k}.
 \end{equation}
 Noting that
\[
S_\theta(\psi)=S_\theta(\Psi)+S_\theta(\psi'):=
-\frac1{L^{2}}\sum_{k\in\{p^{+}_\theta,p^-_\theta\}}\hat\psi^+_k
M(k)\hat\psi_k^-- \frac1{L^{2}}\sum_{k\not\in\{p^{+}_\theta,p^-_\theta\}}\hat\psi'^+_k M(k)\hat\psi_k'^-,\] we rewrite
\begin{equation} \label{eq:501}e^{\mathcal{W}_{L}^{(\th)}(A,\phi)}=\Big(\prod_{k\not\in\{p^{+}_\theta,p^-_\theta\}}\mu(k)\Big)\int D\Psi\, e^{S_\theta(\Psi)}\int P(D\psi')\, e^{V(\psi,A)+(\psi,\phi)},
\end{equation}
where $D\Psi=\prod_{k\in\{p^{+}_\theta,p^-_\theta\}}\big(L^{2|\mathcal I|}D\hat\Psi_k\big)$ and the
Grassmann ``measure'' $D\hat\Psi_k$ is defined, as usual, so that
\[\int \Big(\prod_{k\in\{p^{+}_\theta,p^-_\theta\}}D\hat\Psi_k\Big)\, \Big(\prod_{k\in\{p^{+}_\theta,p^-_\theta\}}
\prod_{\ell\in
  \mathcal{I}}\hat\Psi^-_{k,\ell}\hat\Psi^+_{k,\ell}\Big)=1,\] while
we have $\int \Big(\prod_{k\in I}D\hat\Psi_k\Big) Q(\Psi)=0$
whenever $Q(\Psi)$ is a monomial in $\{\hat \Psi^\pm_{k,\ell}\}_{k\in\{p^{+}_\theta,p^-_\theta\},\ell\in\mathcal I}$
of degree strictly lower or strictly larger than $4|\mathcal{I}|$. Moreover, $P(D\psi')$ is
the Grassmann Gaussian integration, normalized so that $\int
P(D\psi')=1$, associated with the propagator
\begin{equation}
g'(x,y)=\int P(D \psi') \psi'^-_{x} \psi'^+_{y}=L^{-2}\sum_{k\not \in\{p^{+}_\theta,p^-_\theta\}} e^{-ik(x-y)} (M(k))^{-1}.
\end{equation}
Note that, since $\psi'^\pm_{x}$ is a vector with $|\mathcal I|$
components, $g'(x,y)$ is an $|\mathcal I|\times
|\mathcal I|$ matrix, for fixed $x,y$.
\begin{remark}
We emphasize also that, since the zeros of $\mu$ are simple, $\mu(k)\ne0$ for every $k\not\in\{p^+_\theta,p^-_\theta\}$ (this is the reason why we singled out the two momenta $p^\o_\theta $ where $\mu$ possibly vanishes and $M$ is not invertible).
\end{remark}

Next we introduce the following
  \begin{definition}
    \label{def:chi}
We let  $\chi_\o: \mathbb R^2
  \longrightarrow [0,1],\o=\pm1$  be two  $C^\infty$ functions in the Gevrey class of order 2, see
  \cite[App.C]{GMTaihp}, with the properties that:
\begin{enumerate}
\item [(i)]
  $\chi_\o(k)=\chi_{-\o}(-k)$,
\item[(ii)]
  $\chi_\o(k)=1$ if $|k-p^\o_0|\le c_0/2$, and $\chi_\o(k)=1$ if $|k-p^\o_0|> c_0$, with $c_0$ a small enough positive constant, such
  that in particular the support of $\chi_+$ is disjoint
  from the support of $\chi_{-}$.
\end{enumerate}
\end{definition}
  We will specify later a more explicit
definition of $\chi_\o$.
We rewrite
$g'=g^{(0)}+g^{(1)}$, with 
\begin{equation}\label{eq:5.5}
\begin{split}
& g^{(0)}(x,y)=L^{-2} \sum_{\omega=\pm} \sum_{k \not\in \{p^+_\theta,p^-_\theta\}} e^{-ik(x-y)}\chi_\o(k) (M(k))^{-1},\\
& g^{(1)}(x,y)=L^{-2} \sum_{k \in \mathcal P(\theta)} e^{-ik(x-y)}(1-\chi_+(k)-\chi_-(k)) (M(k))^{-1}.
\end{split}
\end{equation}  
Since the cutoff functions $\chi_\o$ are  Gevrey functions of order 2, the propagator $g^{(1)}$ has stretched-exponential decay at large distances
\begin{equation}\label{eq:502}
  \|g^{(1)}(x,y)\|\le C e^{-\kappa\sqrt{|x-y|}},\end{equation}
for suitable $L$-independent constants $C,\kappa>0$, cf. with \cite[Eq.~(6.21)]{GMT20} $|\mathcal I|\times
|\mathcal I|$ (recall that the propagators are $|\mathcal I|\times
|\mathcal I|$ matrices; the norm in the l.h.s. is any matrix norm).
In \eqref{eq:502}, $|x-y|$ denotes the graph distance between $x$ and $y$ on  $G_L$.

Using the addition principle for Grassmann Gaussian integrations
\cite[Proposition 1]{GMTaihp}, we rewrite \eqref{eq:501} as
\begin{multline}\label{eq:504}
 e^{\mathcal{W}_{L}^{(\th)}(A,\phi)}= \Big(\prod_{k\not\in \{p^+_\theta,p^-_\theta\}}\mu(k)\Big)\int D\Psi\, e^{S_\theta(\Psi)}\int P_{(0)}(D\psi^{(0)})  \\ \times  \int P_{(1)}(D\psi^{(1)})\, e^{V(\Psi+\psi^{(0)}+\psi^{(1)},A)+(\Psi+\psi^{(0)}+\psi^{(1)},\phi)}\\
 = \Big(\prod_{k \not\in \{p^+_\theta,p^-_\theta\}} \mu(k) \Big) e^{L^2 E^{(0)}+S^{(0)}(J,\phi)} \int D \Psi\, e^{S_\theta(\Psi)}\int P_{(0)} (D\psi^{(0)})\,e^{V^{(0)}(\Psi+\psi^{(0)},J,\phi)}
\end{multline}
where: $P_{(0)}$ and $P_{(1)}$ are the Grassmann Gaussian integrations
with propagators $g^{(0)}$ and $g^{(1)}$, respectively, i.e.,
letting $O_\omega=\{k \in \mathcal
  P(\theta)\setminus\{p^+_\theta,p^-_\theta\}: \chi_\o(k) \neq 0$, 
 \begin{equation} P_{(0)}(D\psi)=\prod_\omega\frac{\Big(L^{2|\mathcal
      I||O_\omega|}\prod_{k\in O_\omega}D\hat\psi_k\Big)
    \exp{\Big(-L^{-2} \sum_{ k \in O_\omega} (\chi_\o(k))^{-1} \hat
      \psi^{+}_k M(k)\hat \psi^-_k }\Big)}{\Big(\prod_{k \in O_\omega}
    \mu(k) (\chi_\o(k))^{-|\mathcal{I}|}\Big)},\label{eq:5.8}
\end{equation}
and a similar explicit expression for $P_{(1)}$ holds; $J=\{J_e\}_{e\in E_L}$ with $J_e=e^{A_e}-1$; $E^{(0)}$, $S^{(0)}$ and $V^{(0)}$ are defined 
via 
\begin{equation}L^2 E^{(0)}+S^{(0)}(J,\phi)+V^{(0)}(\psi,J,\phi)=\log \int P_{(1)}(D\psi^{(1)}) e^{V(\psi+\psi^{(1)},A)+(\psi+\psi^{(1)},\phi)},\label{eq:5.9}\end{equation}
with $E^{(0)},S^{(0)}$ fixed uniquely by the condition that
$V^{(0)}(0,J,\phi)=S^{(0)}(0,0)=0$. Proceeding as in the proof of
\cite[Eq.(6.24)]{GMT20}, one finds that the effective potential
$V^{(0)}$ can be represented as follows:
\begin{equation}\label{eq:503}
V^{(0)}(\psi,J,\phi)=\sum_{\substack{n>0 \\ m,q \geq 0 \\ n+q  \in 2\mathbb{N}}}  \sum^*_{\substack{\underline{x},\underline{y},\un{z} \\ \un\ell,\un \ell',\un \ell',\\ \un s, \un \sigma,\un \sigma' }} \psi^{\un \sigma}_{\un{x},\un{\ell}} J_{\un y,\un \ell', \un s} \phi^{\un \sigma'}_{\un z,\un \ell''} W_{n,m,q;\boldsymbol{a}}(\underline{x},\underline{y},\un{z})
\end{equation} 
where the second sum runs over $\un{x} \in \Lambda^n, \un{y} \in
\Lambda^m,\un{z} \in \Lambda^q$, $\un{\ell} \in
\mathcal{I}^n,\un{\ell}' \in \mathcal{I}^m,\un{\ell}'' \in
\mathcal{I}^q$, $ \un{s} \in
\mathcal{J}_{\ell_1}\times\dots\times\mathcal{J}_{\ell_m}$,
$\un \sigma \in \{+,-\}^n,\un \sigma' \in \{+,-\}^q$ (the $*$ on the
sum indicates the constraint that $\sum_{i=1}^n \sigma_i+\sum_{i=1}^q
\sigma_i'=0$), and we defined
$J_{\un{y},\un{\ell'},\un{s}}:=\prod_{i=1}^m J_{y_i,\ell'_i,s_i}$
(here $J_{y,\ell,s}$ stands for $J_e$ when the edge $e \in E_L$ has
black site of coordinates $(y,\ell)$ and orientation $s \in
\mathcal{J}_{\ell}$), $\psi^{\un \sigma}_{\un x,\un
  \ell}:=\prod_{i=1}^n \psi^{\sigma_i}_{x_i,\ell_i}$, and similarly
for $\phi^{\un \sigma'}_{\un z,\un \ell''}$; finally, ${\boldsymbol a}:=(\underline\ell,\underline\sigma,\underline\ell',\underline s,\underline\ell'',\underline\sigma')$. Without loss of
generality, we can assume that the kernels $W_{n,m,q;(\un \ell,\un\sigma,\un
  \ell',\un s,\un \ell'',\un\sigma')}$ are symmetric under permutations of
the indices $(\un y,\un \ell', \un s)$ and antisymmetric both under
permutations of $(\un x,\un \ell,\un \sigma)$ and of $(\un z, \un \ell'',\un\sigma')$. A
 representation similar to \eqref{eq:503} holds also for
$S^{(0)}(J,\varphi)$ with kernels $W^{0,m,q}_{\boldsymbol a}(\un y, \un z)$, where ${\boldsymbol{a}}=(\un \ell',\un s,\un \ell'',\un\sigma')$. As discussed after \cite[Eq.(6.27)]{GMT20}, using
the Battle-Brydges-Federbush-Kennedy determinant formula and the
Gram-Hadamard bound \cite[Sec. 4.2]{GM01} one finds that $E^{(0)}$ and
the values of the kernels $W_{n,m,q;\boldsymbol a}(\un x,\un y,\un z)$ at fixed positions $\un
x,\un y,\un z$ are real analytic functions of the parameter $\lambda$,
for $|\lambda|\le \lambda_0$ and $\lambda_0$ sufficiently small but independent of $L$. Moreover, in the analyticity
domain, $|E^{(0)}| \leq C |\lambda|$, and
\begin{align}\label{kernorm}
\| W_{n,m,q}
\|_{\kappa,0} \leq C^{n+m+q} |\lambda|^{\mathbbm{1}_{n+q>2} \max\{1,c(n+q)\}}
\end{align}
for suitable positive constants $C,c$ independent of $L$. Here the weighted norm $\| \cdot \|_{\kappa,0}$ is defined as
\begin{equation}
 \| W_{n,m,q}
 \| _{\kappa,0}:=L^{-2} \sup_{\boldsymbol{a}} \sum_{\underline{x}, \underline{y}, \underline{z}} |W_{n,m,q;\boldsymbol{a}}(\underline{x},\underline{y},\un z)| e^{\frac{\kappa}{2}\sqrt{\delta(\underline{x},\underline{y}, \un z)}},
\end{equation} where $\kappa>0$ is the same as in \eqref{eq:502}, and $\delta(\cdot)$ denotes the tree distance, that is the length of the shortest tree on the torus connecting points with the given coordinates. 

\begin{remark} The kernels of the effective potential $V^{(0)}$, of $S^{(0)}$, as well as the constant $E^{(0)}$, depend on $\theta$, 
because both the interaction $V(\psi,A)$ in \eqref{eq:5.9} and the
propagator $g^{(1)}$ involved in the integration do. Both these
effects can be thought of as being associated with boundary conditions
assigned to the Grassmann fields,
{periodic in both coordinate
directions for $\theta=(-,-)$, anti-periodic in both coordinate
directions for $\theta=(+,+)$}, and mixed (periodic in one direction
and anti-periodic in the other) in the remaining two cases. Therefore,
using Poisson summation formula (see e.g. \cite[App. A.2]{GMTaihp}, where notations are different), both $g^{(1)}$ and the kernels of
$V^{(0)}$ and $S^{(0)}$ can be expressed via an `image rule',
analogous to the summation over images in electrostatics, of the
following form:
\begin{equation} g^{(1)}(x,y)=\sum_{n=(n_1,n_2)\in\mathbb Z^2}(-1)^{\frac{\theta_1+1}2 n_1+\frac{\theta_2+1}2 n_2}g^{(1),\infty}(x-y+nL),\end{equation}
where $g^{(1),\infty}(x)=\lim_{L\to\infty}g^{(1)}(x,0)$ (an analogous sum rule holds for the kernels of $V^{(0)}$ and $S^{(0)}$). From this
representation, together with the decay bounds mentioned above on $g^{(1)}$ and on the
kernels of the effective potential, it readily follows that the
dependence upon $\theta$ of these functions is a finite-size effect that is stretched-exponentially small in $L$. Similarly, the dependence
upon $\theta$ of $E^{(0)}$ corresponds to a {stretched-exponentially} small
correction as $L\to\infty$ (see also \cite[Appendix
  A.2]{GMTaihp}). Therefore, all these corrections are irrelevant for
the purpose of computing the thermodynamic limit of thermodynamic
functions and correlations. For this reason and for ease of notation,
here and below we will not indicate the dependence upon $\theta$
explicitly in most of the functions and constants involved in the
multiscale construction.
\end{remark}

\subsection{Integration of the massive degrees of freedom}
\label{sec:5.2}
Using \eqref{eq:5.8} in \eqref{eq:504} and renaming $\Psi+\psi^{(0)}\equiv \psi$, we get 
\begin{multline}\label{eq:507}
e^{\mathcal{W}_{L}^{(\th)}(A,\phi)}=e^{L^2(t^{(0)}+ E^{(0)})+S^{(0)}(J,\phi)} \\ 
 \times  \int D\psi\, e^{-L^{-2} \sum_\omega\sum_{k \in \mathcal{B}_\omega} (\chi_\o(k))^{-1}\hat \psi^+_{k} M(k) \hat \psi^-_{k}}e^{V^{(0)}(\psi,J,\phi) }
\end{multline}
where, recalling that $O_\o$ was defined right before \eqref{eq:5.8},
\[\mathcal{B}_\omega:=O_\omega\cup\{p^\omega_\theta\}=\{k \in
\mathcal{P}(\theta): \chi_\o(k)\ne0 \},\] 
$D\psi:=\prod_{\omega=\pm}\Big(L^{2|\mathcal
  I||\mathcal{B}_\omega|}\prod_{k\in
  \mathcal{B}_\omega}D\hat\psi_k\Big)$ and
we have set
$$t^{(0)}:=\frac1{L^2}\sum_{k \in (\cup_\omega\mathcal B_\omega)^c} \log \mu(k)+\frac{|\mathcal{I}|}{L^2}\sum_{\omega}\sum_{k \in O_\omega}  \log \chi_\o(k).$$

Since $p^+_0$ is a simple zero of $\mu(k)$, there exists an invertible complex matrix $B_+$ such that \begin{equation}
B_+ M(p^+_0) B_+^{-1}=\begin{pmatrix}
0 & 0 \\
0 & A_+ 
\end{pmatrix} \label{A+}
\end{equation} for an invertible $(|\mathcal{I}|-1) \times (|\mathcal{I}|-1) $ matrix $A_+$. Clearly, $B_+$ (and, therefore, $A_+$) is not defined uniquely; we choose it arbitrarily, in such a way that \eqref{A+} holds, 
and fix it once and for all. Taking the complex conjugate in the above equation and using the symmetry of $M$, see \eqref{fermippvel}, one finds that the same relation holds at $p^-_0$ with matrices $B_-:=\overline{B_+}$, $A_-:=\overline{A_+}$. Let  ${\bf M}_\omega(k):=B_\omega M(k)B_\omega^{-1}$, and define the matrices 
$T_\omega(k), W_\omega(k), U_\omega(k)$ and $V_\omega(k)$ of sizes $1\times 1$, $(|\mathcal{I}|-1) \times (|\mathcal{I}|-1)$, $1 \times (|\mathcal{I}|-1)$ and $(|\mathcal{I}|-1) \times 1$, respectively, via 
\begin{equation}
\begin{pmatrix}
T_\o (k) & U_\o(k) \\ V_\o(k) & W_\o(k)
\end{pmatrix}:={\bf M}_\o(k).\label{eq:5.15}
\end{equation}
Analyticity of $M(k)$ in $k$ implies, in particular, that $T_\o(k+p^\omega_0)$, $U_\o(k+p_0)$ and $V_\o(k+p^\o_0)$ are all $O(k)$ as $k\to 0$, while $W_\o(k+p^\o_0)=A_\o+O(k)$. Let $\mathcal B^{(2)}_\omega\supset \mathcal B_\o$ be the ball 
centered at $p_0^\omega$ with radius $2c_0$
, and assume that $c_0$
is so small that $\inf_{k\in\mathcal B^{(2)}_\omega}|\det W_\o(k)|$ is positive. 
Taking the determinant at both sides of \eqref{eq:5.15}, 
letting $\rho_\o:=\det A_\o$, we find that
\[\mu(k)\stackrel{k\to p^\omega_0}=\rho_\o T_\o(k)+O((k-p^\o_0)^2)\]  so that, recalling \eqref{eq:alphabeta},
\begin{align}
T_\o(k+p^\o_0)\stackrel{k\to0}=\frac{\alpha^0_\o k_1+ \beta^0_\o k_2}{\rho_\o}+O(k^2). 
\end{align}
Since $W_\o(k)$ is non singular on $\mathcal B_\o$, for $k\in\mathcal
B_\omega$ we can block diagonalize ${\bf M}_\o$ as
\begin{equation}\label{eq:5.17}
{\bf M}_\o(k)=\begin{pmatrix}
1 & U_\o(k)W^{-1}_\o(k) \\
0 & \mathbbm{1}
\end{pmatrix}
\begin{pmatrix} 
{\bf T}_\o(k) & 0 \\ 
0 & W_\o(k)\end{pmatrix}
\begin{pmatrix}
1 & 0 \\ 
W^{-1}_\o(k) V_\o(k) & \mathbbm{1}
\end{pmatrix}
\end{equation}
 where ${\bf T}_\o(k):=T_\o(k)-U_\o(k) W^{-1}_\o(k)V_\o(k)$ is the
 Schur complement of the block $W_\o$. Note that from the properties
 of $U_\o,V_\o,W_\o$, the function ${\bf T}_\o$
 satisfies $${\bf T}_\o(k+p^\o_0)\stackrel{k\to0}=\frac{\alpha^0_\o k_1+
 \beta^0_\o k_2}{\rho_\o}+O(k^2),$$ like $T_\o$. In view of this
 decomposition, we perform the following change of Grassmann
 variables: for $k \in \mathcal{B}_\o$ we define
\begin{equation}\label{eq:5.18}\begin{split}
& (\hat{\varphi}^+_{k},\hat{\xi}^+_{k,1}, \dots,\hat{\xi}^+_{k,|\mathcal{I}|-1}):=\hat{\psi}^+_{k} B_\o^{-1} \begin{pmatrix}
1 & U_\o(k) W^{-1}_\o(k) \\ 0 & \mathbbm{1}
\end{pmatrix} \\ 
& (\hat{\varphi}^-_{k}, \hat{\xi}^-_{k,1}, \dots, \hat{\xi}^-_{k,|\mathcal{I}|-1})^T:= \begin{pmatrix}
1 & 0 \\  W^{-1}_\o(k) V_\o(k) & \mathbbm{1}   
\end{pmatrix} B_\o \hat{\psi}^-_{k}.
\end{split}\end{equation}
For later convenience, we give the following
\begin{lemma}
  \label{lemma:chi}
  Define    $\psi^\pm_x:=L^{-2}\sum_\o \sum_{k \in \mathcal{B}_\o} e^{\pm ikx} \hat\psi^+_k$
  and
  \[\xi^\pm_x(\o):=L^{-2}\sum_{k \in \mathcal{B}_\o}e^{\pm ikx} \hat\xi^+_{k},\quad 
\varphi^\pm_x(\o):=L^{-2}\sum_{k \in \mathcal{B}_\o}e^{\pm ikx} \hat\varphi^+_k.\]
  Then, the inverse of the transformation \eqref{eq:5.18} in $x$ space is
 \begin{equation}\label{trans}
\begin{split} 
& \psi^+_{x,\ell}=\sum_{\omega} \Big( \varphi^+_{x}(\o)(B_\o)_{1 \ell} +(\varphi^+(\o) \ast  \tau^+_{\omega,\ell})_x+\sum_{j=2}^{|\mathcal{I}|} \xi^+_{x,j-1}(\o) (B_\o)_{j \ell} \Big )\\
& \psi^-_{x,\ell}=\sum_{\omega} \Big( (B_\o^{-1})_{\ell 1}\varphi^-_{x}(\o) +( \tau^-_{\omega,\ell}\ast \varphi^-(\o))_x+\sum_{j=2}^{|\mathcal{I}|}(B_\o^{-1})_{\ell j} \xi^-_{x,j-1}(\o)  \Big )
\end{split}
\end{equation} 
where
\begin{equation}\label{taupm}\begin{split} &{\tau}_{\omega,\ell}^+(x):=-L^{-2}\sum_{k \in \mathcal{P}(\theta)} \sum_{j=2}^{|\mathcal{I}|} e^{ikx} \chi_\o\big(\tfrac{k+p_0^\omega}2\big)(U_\o(k)\cdot W^{-1}_\o(k))_j (B_\o)_{j\ell}\\
&{\tau}_{\omega,\ell}^-(x):=-L^{-2}\sum_{k \in \mathcal{P}(\theta)} \sum_{j=2}^{|\mathcal{I}|}e^{-ikx}\chi_\o\big(\tfrac{k+p_0^\omega}2\big)(B^{-1}_\o)_{\ell j}(W_\o^{-1}(k) \cdot V_\o(k))_j .\end{split}
\end{equation}
\end{lemma}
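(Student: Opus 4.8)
The plan is to exploit the fact that the change of variables \eqref{eq:5.18} is \emph{local in Fourier space}: for each fixed $k\in\mathcal B_\omega$ it is just the multiplication of the $|\mathcal I|$-component Grassmann vectors $\hat\psi^\pm_k$ by the block-triangular matrices that appear in the block-diagonalization \eqref{eq:5.17}. Hence its inverse can be written down explicitly by inverting those triangular factors, and the only genuine work is to transport the result back to $x$-space and to recognize one of the resulting contributions as a convolution.

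First I would invert \eqref{eq:5.18} at fixed $k$. Since $\left(\begin{smallmatrix}1 & U_\omega W_\omega^{-1}\\ 0 & \mathbbm{1}\end{smallmatrix}\right)^{-1}=\left(\begin{smallmatrix}1 & -U_\omega W_\omega^{-1}\\ 0 & \mathbbm{1}\end{smallmatrix}\right)$, and analogously for the lower-triangular factor, one gets $\hat\psi^+_k=(\hat\varphi^+_k,\hat\xi^+_k)\left(\begin{smallmatrix}1 & -U_\omega W_\omega^{-1}\\ 0 & \mathbbm{1}\end{smallmatrix}\right)B_\omega$ and the transposed relation for $\hat\psi^-_k$ (here one must keep track of the row/column convention for $\psi^+$ versus $\psi^-$). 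Reading off the $\ell$-th component, $\hat\psi^+_{k,\ell}$ splits into a \emph{diagonal} part $\hat\varphi^+_k(B_\omega)_{1\ell}+\sum_{j\ge 2}\hat\xi^+_{k,j-1}(B_\omega)_{j\ell}$ and an \emph{off-diagonal} part $-\hat\varphi^+_k\sum_{j\ge 2}(U_\omega(k) W_\omega^{-1}(k))_{j-1}(B_\omega)_{j\ell}$, and symmetrically for $\hat\psi^-_{k,\ell}$ with $B_\omega^{-1}$ and $W_\omega^{-1}(k)V_\omega(k)$. Applying $L^{-2}\sum_\omega\sum_{k\in\mathcal B_\omega}e^{\pm ikx}$ to the diagonal part reproduces verbatim the terms $\varphi^\pm_x(\omega)(B_\omega)_{1\ell}$ and $\sum_{j\ge2}\xi^\pm_{x,j-1}(\omega)(B_\omega)_{j\ell}$ (resp. with $B_\omega^{-1}$) of \eqref{trans}, directly from the definitions of $\varphi^\pm(\omega),\xi^\pm(\omega)$ given in the statement.

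It then remains to identify the off-diagonal part with the convolution $(\varphi^+(\omega)\ast\tau^+_{\omega,\ell})_x$ (resp. $(\tau^-_{\omega,\ell}\ast\varphi^-(\omega))_x$). By the discrete convolution theorem on $\Lambda$, $(\varphi^+(\omega)\ast\tau^+_{\omega,\ell})_x=L^{-2}\sum_{k\in\mathcal B_\omega}e^{ikx}\hat\varphi^+_k\,\hat\tau^+_{\omega,\ell}(k)$, and from the definition \eqref{taupm} a short Fourier computation gives $\hat\tau^+_{\omega,\ell}(k)=-\chi_\omega(\tfrac{k+p_0^\omega}{2})\sum_{j\ge2}(U_\omega(k)W_\omega^{-1}(k))_{j-1}(B_\omega)_{j\ell}$. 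Thus the desired identity reduces to verifying that $\chi_\omega(\tfrac{k+p_0^\omega}{2})=1$ for every $k\in\mathcal B_\omega$: indeed $\mathcal B_\omega\subset\supp\chi_\omega$ forces $|k-p_0^\omega|\le c_0$, whence $|\tfrac{k+p_0^\omega}{2}-p_0^\omega|=\tfrac12|k-p_0^\omega|\le c_0/2$, and Definition~\ref{def:chi}(ii) gives $\chi_\omega=1$ on that ball. The same computation with the $e^{-ikx}$ convention settles the $\psi^-$ fields.

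The argument is essentially bookkeeping, and the only genuinely delicate design choice — the step I would single out — is the rescaled argument $\tfrac{k+p_0^\omega}{2}$ of the cutoff in \eqref{taupm}. It is engineered so that simultaneously: (i) $\chi_\omega(\tfrac{k+p_0^\omega}{2})\equiv 1$ on $\mathcal B_\omega$, so that inserting it does not alter the inverse transform on the momenta that actually occur; and (ii) its support remains inside $\mathcal B^{(2)}_\omega$, the ball of radius $2c_0$ on which $W_\omega$ is guaranteed invertible, so that the factors $U_\omega W_\omega^{-1}$ and $W_\omega^{-1}V_\omega$ defining $\tau^\pm_{\omega,\ell}$ are well defined on the whole summation range $k\in\mathcal P(\theta)$. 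The Gevrey smoothness of $\chi_\omega$ then additionally yields the stretched-exponential decay of $\tau^\pm_{\omega,\ell}$ needed in the subsequent estimates.
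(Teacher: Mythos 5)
Your proof is correct and follows essentially the same route as the paper's: invert the block-triangular factors of \eqref{eq:5.17} at fixed $k$, transform back to $x$-space, and identify the off-diagonal contribution with the convolution by observing that $\chi_\omega\big(\tfrac{k+p_0^\omega}{2}\big)\equiv 1$ on $\mathcal B_\omega$ (so inserting it is harmless) while its support stays inside $\mathcal B^{(2)}_\omega$ where $W_\omega$ is invertible. Your closing remark about the role of the rescaled cutoff argument matches exactly the "slightly delicate point" the paper singles out, as well as the remark following the lemma.
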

\begin{proof} The proof is essentially an elementary computation (one inverts the linear relation \eqref{eq:5.18} for given $k$ and then takes the Fourier transform to obtain the expression in real space) but there is a slightly delicate point, that is to see where the cut-off function $\chi_\o\big(\tfrac{k+p_0^\omega}2\big)$ comes from. After a few elementary linear algebra manipulations, one finds
  that $\psi^+_{x,\ell}$ equals an expression like in the r.h.s. of
  \eqref{trans}, where the term $(\varphi^+(\o) \ast
  \tau^+_{\omega,\ell})_x$ is replaced by
  \[
  \label{senzachi}
  \frac1{L^2}\sum_\o\sum_{k\in \mathcal B_\o}\hat\varphi^+_k f^\o_\ell(k)e^{i k x},\quad
  f^\o_\ell(k):=-\sum_{j=2}^{|\mathcal I|}\big(U_\o(k)W^{-1}_\o(k)\big)_j(B_\o)_{j\ell}.
  \]
  Since the sum is restricted to $k\in \mathcal B_\o$, we can freely multiply the summand by  $\chi_\o\big(\tfrac{k+p_0^\omega}2\big)$, which is identically equal $1$ there, since the argument is at distance at most $c_0/2$ from $p^\o_0$. At that point, we use the fact that
  \[
\hat\varphi^+_k{\bf 1}_{k\in \mathcal B_\o}=\sum_x\varphi^+_x(\o)e^{-i k x}
\]
and we immediately obtain that \eqref{senzachi} coincides with $(\varphi^+(\o) \ast  \tau^+_{\omega,\ell})_x$, with $\tau^+_{\o,\ell}$ as in \eqref{taupm}.
\end{proof}

At this point we go back to  \eqref{eq:507}, that we rewrite as
\begin{multline}
e^{\mathcal{W}_{L}^{(\th)}(A,\phi)}=e^{L^2 ({\bf t}^{(0)}+ E^{(0)})+S^{(0)}(J,\phi)}\\
\times \int  D\varphi\,e^{-L^{-2} \sum_{\omega}\sum_{k \in \mathcal{B}_\omega} (\chi_\o(k))^{-1}\hat \varphi^+_{k} {\bf T}_\o(k) \hat \varphi^-_{k}} \int P_W(D\xi)\, e^{\widetilde V^{(0)}(\varphi,\xi,J,\phi)}
\end{multline}
where
$${\bf t}^{(0)}:= L^{-2}\sum_{k \in O'}\log \mu(k)+L^{-2}\sum_{\omega=\pm}\sum_{k\in\mathcal B_\omega}\big(\log\det W_\omega(k)+\log \chi_\o(k)\big),$$
$D\varphi:=\prod_{\omega=\pm}\Big(L^{2|\mathcal{B}_\omega|}\prod_{k\in \mathcal{B}_\omega}D\hat\varphi_k\Big)$, $P_{W}(D\xi)$ is the normalized Gaussian Grassmann integration with propagator (which is a $(|\mathcal I|-1)\times (|\mathcal I|-1)$ matrix)
\begin{equation}\label{eq:508}
g^W_{\omega,\omega'}(x,y):=\int P(D\xi)\, \xi^-_x(\o) \xi^+_y(\o')= \frac{\delta_{\o,\o'}}{L^2}\sum_{k \in P(\theta)} e^{-ik(x-y)} \chi_\o(k) (W_\o(k))^{-1},
\end{equation}
and $\widetilde V^{(0)}(\varphi,\xi,J,\phi)$ is the same as $V^{(0)}(\psi,J,\phi)$, once $\psi$ is re-expressed in terms of the new variables $(\varphi,\xi)$, as in
Lemma \ref{lemma:chi}.

\begin{remark} Note that, because of $\chi(\cdot)$, the sums
    \eqref{taupm} defining $\tau^\pm_{\o,\ell}(x)$ are restricted to
    momenta $k\in\mathcal B_\o^{(2)}$ where $W_\o$ is indeed
    invertible. Note also that, from the smoothness of
    $\hat\tau^\pm_{\omega,\ell}(k)$ it follows that
    $\tau^\pm_{\omega,\ell}(x)$ decays to zero in a
    stretched-exponential way, similar to \eqref{eq:502}. That is,
    $\psi$ is essentially a local function of $\varphi,\xi$.  As a
    consequence, the kernels of $\widetilde V^{(0)}$ satisfy qualitatively
    the same bounds as those of $V^{(0)}$. 
\end{remark}
Since $W_\o(\cdot)$ is  smooth and invertible in the support of $\chi_\o$, we see from \eqref{eq:508} that the propagator of
the variables $\{\xi_{x}(\o)\}$ decays as
\[\|g^W(x,y)\| \leq C e^{-\kappa
  \sqrt{|x-y|}}\]
uniformly in $L$, a 
 behavior  analogous
to \eqref{eq:502}.
For this reason, we call the variables $\{\xi_{x}(\o)\}$
 {\it massive}.
 On the other hand, we call {\it critical} the remaining
$\{\varphi_{x}(\o)\}$ variables.

The integration of the massive
fields $\xi$, which is performed in a way completely analogous to the
one of $\psi^{(1)}$ in \eqref{eq:5.9}, produces an expression for the generating
functional in terms of a Grassmann integral involving only the
critical fields $\varphi$:
\begin{equation}\label{eq:509}
e^{\mathcal{W}_{L}^{(\th)}(A,\phi)}=e^{L^2 E^{(-1)}+S^{(-1)}(J,\phi)}
\int  D\varphi e^{-L^{-2} \sum_{\omega}\sum_{k \in \mathcal{B}_\omega} \hat \varphi^+_{k} (\chi_\o(k))^{-1}{\bf T_\o}(k) \hat \varphi^-_{k}} e^{V^{(-1)}(\varphi,J,\phi)}
\end{equation}
where 
\begin{multline}\label{eq:5.24}
L^2(E^{(-1)}-{\bf t}^{(0)}-E^{(0)})+S^{(-1)}(J,\phi)-S^{(0)}(J,\phi)+V^{(-1)}(\varphi,J,\phi)\\=\log \int P_W(D\xi)e^{\widetilde V^{(0)}(\varphi,\xi,J,\phi)}
\end{multline}
and $V^{(-1)}, S^{(-1)}$ are fixed in such a way that $V^{(-1)}(0,J,\phi)=S^{(-1)}(0,0)=0$. The effective potential $V^{(-1)}$ can be represented in a way similar to \eqref{eq:503}, namely
\begin{equation}
V^{(-1)}(\varphi,J,\phi)=\sum_{\substack{n>0 \\ m,q \geq 0 \\ n+q  \in 2\mathbb{N}}}  \sum^*_{\substack{\underline{x},\underline{y},\un{z} \\ \un \ell',\un \ell'', \un \omega \\ \un s, \un \sigma,\un \sigma'}} \varphi^{\un \sigma}_{\un{x}}(\un{\o}) J_{\un y,\un \ell', \un s} \phi^{\un \sigma'}_{\un z,\un \ell''} W^{(-1)}_{n,m,q;\boldsymbol{a}}
(\underline{x},\underline{y},\un{z};\un{\o})\label{eq:reprV0-1}
\end{equation}
where $\un \omega \in \{-1,+1\}^n$ and $\varphi^{\un \sigma}_{\un{x}}(\un{\o}):=\prod_{i=1}^n \varphi^{\sigma_i}_{x_i}(\o_i)$, while the other symbols and labels have the same meaning as in \eqref{eq:503}. 
In virtue of the decay properties of the propagator $g_{\o,\o'}^W$, the kernels $W^{(-1)}_{n,m,q;\boldsymbol{a}}(\underline{x},\underline{y},\un{z};\un{\o})$ of $V^{(-1)}(\varphi,J,\phi)$ satisfy the same bounds as \eqref{kernorm}. 

\subsection{Reduction to the setting of \cite{GMT20}}
\label{sec:5.3}
We are left with the integral of the critical variables, which we want to
perform in a way analogous to that discussed in \cite[Section
  6]{GMT20}. In order to get to a point where we can literally apply the results of \cite{GMT20}, a couple of extra steps are needed. First, in order to take into account the fact
that, in general, the interaction has the effect of changing the
location of the singularity in momentum space of the propagator of
$\varphi$, as well as the value of the residues at the singularity, we
find it convenient to rewrite the `Grassmann action' in \eqref{eq:509},
\[-L^{-2} \sum_{\omega}\sum_{k \in \mathcal{B}_\omega} \hat \varphi^+_{k} (\chi_\o(k))^{-1}{\bf  T_\o}(k) \hat \varphi^-_{k}+V^{(-1)}(\varphi,J,\phi),\] in the form of a reference quadratic part, with the `right' singularity structure, 
plus a remainder, whose specific value will be fixed a posteriori via a fixed-point argument. More precisely, we proceed as described in \cite[Section 6.1]{GMT20}: we introduce
\begin{equation}
N(\varphi)=L^{-2} \sum_{\omega} \sum_{k \in \mathcal{B}_\omega} \hat\varphi^+_k(-{\bf T_\omega}(p^\omega)+a_\omega(k_1-p^\omega_1)+b_\omega(k_2-p^\omega_2))\hat\varphi^-_k
\end{equation}
where $p^\omega,a_\omega,b_\omega$ will be fixed a posteriori, and are
assumed to satisfy 
\begin{equation}\label{pvicini}|p^\o-p^\o_0| \ll1\end{equation} for $\lambda$ small, $p^+=-p^-$ and
$\overline{a_+}=-a_-, \overline{b_+}=-b_-$. Define
also \begin{equation} C_\o(k):={
    \bf T_\o}(k)-\chi_\o(k)\Big({\bf
  T_\o}(p^\omega)-a_\o(k_1-p^\omega_1)-b_\o(k_2-p^\o_2)
  \Big)
\end{equation}
and note that it satisfies $C_\omega(p^\o)=0$,  \begin{equation}\label{eq:512}
\partial_{k_1}C_\o(p^\omega)=\partial_{k_1}{\bf T_\o}(p^\omega)+a_\omega=:\alpha_\omega, \qquad \partial_{k_2}C_\o(p^\omega)=\partial_{k_2}{\bf T_\o}(p^\omega)+b_\omega=:\beta_\omega,
\end{equation} 
as well as the symmetry $C_{-\o}(-k)=\overline{C_\o(k)}$. 

Let us introduce the matrix $\mathcal{M}$ (the same as in
\cite[Eq.(4.1)]{GMT20}) given by
\begin{equation}
\mathcal{M}=\frac{1}{\sqrt{\Delta}}\begin{pmatrix}
\beta^1 & \beta^2 \\
-\alpha^1 & -\alpha^2
\end{pmatrix}
\end{equation}
where $\alpha^1$ and $\alpha^2$ (resp. $\beta^1$ and $\beta^2$) are,
respectively, the real and imaginary part of $\alpha_+$
(resp. $\beta_+$), see \eqref{eq:512}, and
$\Delta:=\alpha^1\beta^2-\alpha^2\beta^2$ is a positive real number,
in agreement with \eqref{signab}: note, in fact, that at $\lambda=0$
the sign of $\Delta$ is the same as the sign of
$\text{Im}(\beta_+/\alpha_+)$.
At this point, we can finally fix the cut-off functions $\chi_\o$ of  Definition \ref{def:chi} as follows:
\[
\chi_\o(k):=\chi(|\mathcal M^{-1}(k-p^\o)|)
\]
where $\chi:\mathbb R\mapsto [0,1]$ is a compactly supported  function in the Gevrey class of order $2$.
It is immediate to verify that $\chi$ can be chosen so that that properties (i)-(ii) of Definition \ref{def:chi} are verified.

Given this, we
rewrite \eqref{eq:509} as
\begin{equation}\label{eq:510}
e^{\mathcal{W}_{L}^{(\th)}(A,\phi)}=e^{L^2 E^{(-1)}+S^{(-1)}(J,\phi)} \int  D\varphi e^{-L^{-2} \sum_{\omega}\sum_{k \in \mathcal{B}_\omega} \hat \varphi^+_{k} (\chi_\o(k))^{-1} C_\o(k) \hat \varphi^-_{k}} e^{N(\varphi)+V^{(-1)}(\varphi,J,\phi)}.
\end{equation}

In the above integration, the momenta closest to the zeros of $C_\o$ (i.e., close to $p^\o$)
play a special role and have to be treated at the end of the
multiscale procedure, as discussed in \cite[Section 6.5]{GMT20}. For a
given $\theta \in \{-1,+1\}^2$, denote by $k^\pm_\theta \in
\mathcal{B}_\o$ the closest momenta to $p^\pm$ respectively (with the same remark as in footnote \ref{foot:varii} in case of several possible choices)
and
note that they  satisfy $k_\theta^+=-k_\theta^-$. Next we define
$\hat \Phi_\omega^\pm:=\varphi_{k_\theta^\o}^\pm$,
$\Phi_{\omega,x}^\pm:=L^{-2}e^{\pm i k_\theta^\o x}\hat
\Phi_\omega^\pm$ and
$\mathcal{P}'(\theta):=\mathcal{P}(\theta)\setminus\{k_\theta^\pm\}$. Since
$C_\omega$ does not vanish on $\mathcal{P}'(\theta)$, we can rewrite
\eqref{eq:510} as
\begin{multline}\label{eq:511}
e^{\mathcal{W}_{L}^{(\th)}(A,\phi)}=e^{L^2 {\bf E}^{(-1)}+S^{(-1)}(J,\phi)}\\\times  \int D\Phi e^{-L^{-2}\sum_{\omega}\Phi^+_\o C_\omega(k_\theta^\o)\Phi^-_\o} \int  \tilde P_{(\leq -1)}(D\varphi) e^{N(\varphi,\Phi)+V^{(-1)}(\varphi,\Phi,J,\phi)}
\end{multline}
where, letting $\mathcal{B}_\o':=\mathcal{B}_\o \cap \mathcal{P}'(\theta)$, \[{\bf E}^{(-1)}=E^{(-1)}+L^{-2}\sum_{\omega}\sum_{k \in \mathcal{B}_\o' }(\log C_\omega(k)-\log\chi_\o(k)).\] Moreover, 
$D\Phi:=L^4D\hat\Phi_+\, D\hat \Phi_-$ and $\tilde P_{(\leq -1)}(D\varphi)$ is the normalized Grassmann Gaussian integration with propagator 
\begin{equation}
\int \tilde P_{(\le -1)}(D\varphi) \varphi_{\omega,x}^-\varphi^+_{\omega',y}=
\delta_{\o,\o'}\frac{1}{L^2}\sum_{k \in \mathcal{B}_\o'}e^{-ik(x-y)}\chi_\o(k)(C_\o(k))^{-1}.\label{gle0}
\end{equation}

Finally, we remark that since the momenta $k$ in \eqref{gle0} are
  close to $p^\o$, the propagator \eqref{gle0} has an oscillating prefactor $e^{-ip^\o(x-y)}$ that it is convenient to extract. To this end, we define
  {\it quasi-particle} fields $\varphi_{x,\omega}^{\pm,(\le-1)}$ via \[
  \label{eq:qp}\varphi^\pm_x(\omega)=:e^{\pm i p^\omega x} \varphi^{\pm,(\leq -1)}_{x,\omega}.\] Note that the propagator of the quasi-particle fields equals
  \begin{multline}
    \label{eq:bfg}
\int P_{(\leq -1)}(d\varphi^{\leq -1}) \varphi^{-,(\leq -1)}_{x,\omega}
\varphi^{+,(\leq -1)}_{y,\omega'}=\delta_{\o,\o'}g^{(\leq
  -1)}_\omega(x,y)\\ g^{(\leq
  -1)}_\omega(x,y):= \frac{1}{L^2}\sum_{k\in\mathcal
  P'_\o(\theta)}\frac{e^{-ik(x-y)}\chi(k+p^\omega-p^\omega_0)\chi(|\mathcal
  M^{-1}k|)}{C_\omega(k+p^\omega)},
  \end{multline}
where $\mathcal{P}_\omega'(\theta)=\{k: k+p^\omega \in
\mathcal{P}'(\theta)\}$. Of course, the r.h.s. of \eqref{eq:bfg} is
just the r.h.s. of \eqref{gle0} multiplied by $e^{i p^\o(x-y)}$.  We
now rewrite \eqref{eq:511} as
\begin{multline}
  \label{eq:534}
 e^{\mathcal{W}_{L}^{(\th)}(A,\phi)} \\ = e^{L^2 {\bf E}^{(-1)}+S^{(-1)}(J,\phi)}  \int D\Phi e^{-L^{-2}\sum_{\omega}\Phi^+_\o C_\omega(k_\theta^\o)\Phi^-_\o} \int  P_{(\leq -1)}(D\varphi^{(\leq -1)}) e^{\mathcal V^{(-1)}(\varphi^{(\leq -1)},\Phi,J,\phi)},
\end{multline}
where 
\begin{equation}
\mathcal V^{(-1)}(\varphi,\Phi,J,\phi):=N(\Phi,\varphi)+V^{(-1)}(\Phi,\varphi,J,\phi),
\end{equation}
and in the r.h.s. it is meant that the $\varphi$ variables are expressed in terms of the quasi-particle fields as in
\eqref{eq:qp}. That is, we have simply re-expressed $V^{(-1)}$ in terms
of the quasi-particle fields and we included the counter-terms in the definition of effective potential.
After this rewriting, we find that the following representation holds for $\mathcal V^{(-1)}$:
\begin{equation}
\mathcal V^{(-1)}(\varphi,J,\phi)=\sum_{\substack{n>0 \\ m,q \geq 0 \\ n+q  \in 2\mathbb{N}}}  \sum^*_{\substack{\underline{x},\underline{y},\un{z} \\ \un \ell',\un \ell'', \un \omega \\ \un s, \un \sigma,\un \sigma'}} \varphi^{\un \sigma}_{\un{x}, \un{\o}} J_{\un y,\un \ell', \un s} \phi^{\un \sigma'}_{\un z,\un \ell''} \mathcal{W}^{(-1)}_{n,m,q;\underline\omega,\boldsymbol{a}}(\underline{x},\underline{y},\un{z}),\label{eq:reprV-1}
\end{equation}
with kernels $\mathcal{W}^{(-1)}_{n,m,q;\underline\omega,\boldsymbol{a}}$ satisfying the same estimates as in \eqref{kernorm}. 
The kernels $\mathcal{W}^{(-1)}_{n,m,q;\underline\omega,\boldsymbol{a}}$ are the analogues of $W^{(-1)}_{n,m;\underline\omega,\underline r}$ in \cite[Eq.(6.24)]{GMT20} and satisfy the same properties spelled in \cite[Eq.(6.25)]{GMT20} and following lines. Here the labels $\boldsymbol{a}$ denote the collection of labels $(\un\sigma,\un{\ell'},\un s,\un{\ell''},\un\sigma')$. 

 At this point, we have reduced precisely to the 
  fermionic model studied in \cite[Sec. 6]{GMT20}.

\subsection{Infrared integration and conclusion of the proof of Proposition \ref{asimptrelprpgtr}}
\label{sec:5.4}

Once the partition function is re-expressed as in \eqref{eq:534}, we are
in the position of applying the multiscale analysis of \cite[Section
  6]{GMT20}: note in fact that \eqref{eq:534} has exactly the same
form as \cite[Eq. (6.19)]{GMT20} with its second line written as in \cite[Eq.(6.22)]{GMT20}. 
Therefore, at this point, we can
integrate out the massless fluctuation field $\varphi$ via the same
iterative procedure described in \cite[Section 6.2.1]{GMT20} and
following sections. Such a procedure allows us to express the
thermodynamic and correlation functions of the theory in terms of an
appropriate sequence of \textit{effective potentials} $\mathcal{V}^{(h)}$, $h<
0$. The discussion in \cite[Section 6.4]{GMT20} implies that we can fix
$p^\omega, a_\omega, b_\omega$ uniquely as appropriate
\textit{analytic} functions of $\lambda$, for $\lambda$ sufficiently
small  (so that, in particular, \eqref{pvicini} is satisfied), in such a way that the whole sequence of the effective
potentials is well defined for $\lambda$ sufficiently small, their
kernels are analytic in $\lambda$ uniformly in the system size, and
they admit a limit as $L\to\infty$. In particular, the running
coupling constants characterizing the local part of the effective
potentials are analytic functions of $\lambda$ and the associated
critical exponents are analytic functions of $\lambda$, see
\cite[Sects.6.4.5 to 6.4.9]{GMT20}. The existence of the thermodynamic
limit of correlation functions follows from
\cite[Section 6.5]{GMT20}. The proofs of \eqref{hh110}, \eqref{h10ab} and
\eqref{h10a} in Proposition \ref{asimptrelprpgtr} follow from the
discussion in \cite[Section 6.6]{GMT20} (they are the analogues of
\cite[Eqs. (5.1),(5.2) and (5.3) in Proposition 2]{GMT20}) and this,
together with the fact that \eqref{h11} and \eqref{symmrefprop} are
just restatements of \cite[Eq.(4.24)]{GMT20} and
\cite[Eq.(5.8)]{GMT20}, respectively, concludes the proof of
Proposition \ref{asimptrelprpgtr}.

\medskip

A noticeable, even though mostly aesthetic, difference between the statements of Proposition \ref{asimptrelprpgtr}
and \cite[Proposition 2]{GMT20} is in the labeling of the constants $K^{(1)}_{\omega,j,\ell}$ and $K^{(2)}_{\omega,j,\ell}$ in \eqref{hh110}, as compared to those in \cite[Eq.(5.1)]{GMT20}, 
which are called there $\hat K_{\omega,r}$ and $\hat H_{\omega,r}$,
and in the presence of the constants $I_{\omega,\ell,\ell'}$ in \eqref{h10ab}-\eqref{h10a}, which are absent in their analogues in \cite[Eq.(5.2)-(5.3)]{GMT20}. 
This must be traced back to the different labeling of the sites and edges and, correspondingly, of the external fields $\phi$ and $A$, used in this paper, as compared to \cite{GMT20}. 

First of all, in this paper the edges and the external fields of type $A$ are labelled $(x,j,\ell)$, with $(j,\ell)$ playing the same role as the index $r$ in \cite{GMT20}; 
correspondingly, the analogues of the running coupling constants $Y_{h,r,(\omega_1,\omega_2)}$ defined in \cite[Eq.(6.49)]{GMT20} should now be labelled $Y_{h,(j,\ell),(\omega_1,\omega_2)}$; by repeating the discussion in \cite[Section 6.6]{GMT20} leading to \cite[Eq.(6.160)]{GMT20}, it is apparent that the analogues of the 
constants $\hat K_{\omega,r}, \hat H_{\omega,r}$ should now be labelled $(\omega,j,\ell)$, as anticipated. 

Concerning the constants $I_{\omega,\ell,\ell'}$, they come from the local part of the effective potentials in the presence of the external fields $\phi$. After having 
integrated out the massive degrees of freedom, the infrared integration procedure involves at each step a splitting of the effective potential into a sum of its local part $\mathcal L\mathcal V^{(h)}$ and of its `renormalized', or `irrelevant', part $\mathcal R \mathcal V^{(h)}$, as discussed in \cite[Section 6.2.3]{GMT20}. In  \cite[Section 6]{GMT20}, for simplicity, we discussed the infrared integration only in the absence of external $\phi$ fields. In their presence, the definition of localization must be adapted accordingly. When acting on the $\phi$-dependent part of the effective potential, using a notation similar to \cite[Eq.(6.37)]{GMT20}, we let 
\begin{equation}\begin{split}
\mathcal L \Big(\mathcal V^{(h)}(\varphi,J,\phi)-\mathcal V^{(h)}(\varphi,J,0)\Big)&=\sum_{x\in \Lambda}\sum_{\omega,\ell} \Big(\varphi^+_{x,\omega}\phi^-_{x,\ell}e^{ip^\omega\cdot x}\hat{\mathcal W}^{(h),\infty}_{1,0,1;\omega,(+,\ell,-)}(0)\\ & + 
\varphi^-_{x,\omega} \phi^+_{x,\ell} e^{-ip^\omega\cdot x}\hat{\mathcal W}^{(h),\infty}_{1,0,1;\omega,(-,\ell,+)}(0)\Big).\end{split}
\end{equation}
Next, in analogy with \cite[Eq.(6.49)]{GMT20}, we let 
\begin{equation}\label{rccI}
  I^\pm_{h,\omega,\ell}:=\frac1{\sqrt{Z_{h-1}}}\hat W^{(h),\infty}_{1,0,1;\omega,\ell,(\pm,\mp)}(0),\end{equation}
where $Z_h$ is a real, scalar, function of $\lambda$, called the `wave function renormalization', recursively defined as in \cite[Eq.(6.45)]{GMT20}. Eq.\eqref{rccI}
defines the running coupling constant (r.c.c.) associated with the external field $\phi$. Note that such r.c.c. naturally inherit the label $\ell$ from the corresponding label of the 
external field $\phi$. A straightforward generalization of the discussion in \cite[Section 6.4]{GMT20} shows that $I^\pm_{h,\omega,\ell}$ are analytic in $\lambda$ and converge 
as $h\to-\infty$ to finite constants $I^\pm_{-\infty,\omega,\ell}$, which are, again, analytic functions of $\lambda$. Therefore, by repeating the discussion in \cite[Section 6.6]{GMT20}
for $\hat G^{(2)}_{\ell,\ell'}(k+p^\omega)$ and $\hat G^{(2,1)}_{j,\ell_0,\ell,\ell'}(k+p^\omega,p)$, we find that the dominant asymptotic behavior of these correlations as $k,p\to 0$ 
is proportional to 
$I^+_{-\infty,\omega,\ell}I^-_{-\infty,\omega,\ell'}$, times a function that is independent of $\ell,\ell'$. Building upon this, we obtain \eqref{h10ab} and \eqref{h10a}, with 
$I_{\omega,\ell,\ell'}$ proportional to $I^+_{-\infty,\omega,\ell}I^-_{-\infty,\omega,\ell'}$. Additional details are left to the reader. 

\section{Proof of Theorem \ref{th:GFF}}
\label{sec:6}
In order to prove Theorem \ref{th:GFF} we proceed as in
\cite[Section~7.3]{GMTaihp}: using the fact that convergence of the
moments of a random variable $\xi_n$ to those of a Gaussian random
variable $\xi$ implies convergence in law of $\xi_n$ to $\xi$, we
reduce the proof of \eqref{eq:convGFF} to that of the following
identities:
\begin{equation}\label{twoeq} \begin{split}
& \lim_{\epsilon\to 0}\mathbb E_\lambda(h^\epsilon(f);h^{\epsilon}(f))=\frac{\nu(\lambda)}{2\pi^2}\int\, dx\int \, dy\, f(x)\, f(y)\, \mathfrak{R}[\log\phi_+(x-y)].\\
& \lim_{\epsilon\to 0}\mathbb E_\lambda(\underbrace{h^\epsilon(f);\cdots;h^{\epsilon}(f)}_{n\ \text{times}})=0, \qquad n>2
\end{split}
\end{equation}
where the l.h.s. of the second line denotes the $n^{th}$ cumulant of $h^\epsilon(f)$.
The first equation is a straightforward corollary of Theorem \ref{mainthrm}, for additional details see \cite[p.161, proof of (7.26)]{GMTaihp}. For the proof of the second equation we 
need to show that, for any $2n$-ple of distinct points $x_1,\ldots,x_{2n}$, 
\begin{equation}\label{higher} \mathbb E_\lambda(h(\eta_{x_1})-h(\eta_{x_2});\cdots;h(\eta_{x_{2n-1}})-h(\eta_{x_{2n}}))=O((\min_{1\le i<j\le 2n}|x_i-x_j|)^{-\theta}),\end{equation}
for some constant $\theta>0$. In fact, by proceeding as in
\cite[p.162, Proof of (7.27)]{GMTaihp}, Eq.~\eqref{higher} readily
implies the second line of \eqref{twoeq}. In order to prove
\eqref{higher}, we first expand each difference within the expectation
in the left side as in \eqref{heightfunction}, thus getting
\begin{equation} \text{LHS of \eqref{higher}}=\sum_{e_1\in C_{\eta_{x_{1}}\to \eta_{x_{2}}}}\cdots \sum_{e_n\in C_{\eta_{x_{2n-1}}\to \eta_{x_{2n}}}}\sigma_{e_1}\cdots\sigma_{e_n}
\mathbb E_\lambda(\mathds 1_{e_1};\cdots;\mathds 1_{e_n}).\label{uffa}\end{equation}
At a dimensional level, the truncated
$n$-point correlation in the right hand side decays like
$d^{-n(1+O(\lambda))}$, where
$d$ is the minimal pairwise distance among the edges
$e_1,\ldots,e_n$; therefore, the result of the
$n$-fold summation in \eqref{uffa} is potentially unbounded as
$\max_{i<j}|x_i-x_j|\to\infty$. In order to show that this is not the
case, and actually the result of the
$n$-fold summation is bounded as in the right hand side of
\eqref{higher}, we need to exhibit appropriate cancellations. Once
more, we use the comparison of the dimer lattice model with the
infrared reference model, which allows us to re-express the multi-point
truncated dimer correlation $\mathbb E_\lambda(\mathds
1_{e_1};\cdots;\mathds
1_{e_n})$ as a dominant term, which is the multi-point analogue of
\eqref{rel2p}, plus a remainder, which decays faster at large
distances. More precisely, by using a decomposition analogous to
\cite[Eq.~(7.7)]{GMTaihp} and using the analogue of
\cite[Eq.~(6.90)]{GMTaihp}, if $e_i$ has labels
$(x_i,j_i,\ell_i)$, we rewrite
\begin{equation}\label{epperro} \begin{split}\mathbb E_\lambda(\mathds 1_{e_1};\cdots;\mathds 1_{e_n})&=\sum_{\substack{\omega_1,\ldots,\omega_n=\pm\\ s_1,\ldots,s_n=1,2}} \Big({\prod_{r=1}^nK^{(s_r)}_{\omega_r,j_r,\ell_r}\big(\prod_{r:s_r=2}e^{2ip^{\omega_r}\cdot x_r}}\big)\Big) 
    S^{(s_1,\ldots,s_n)}_{R;\omega_1,\ldots,\omega_n}(x_1,\ldots,x_n)\\
    &+\text{Err}(e_1,\ldots,e_n),\end{split}\end{equation} where
$S^{(s_1,\ldots,s_n)}_{R;\omega_1,\ldots,\omega_n}$ are the multi-point
density-mass correlations of the reference model (defined as in
\cite[Eq.~(6.85)]{GMTaihp} or as the multi-point analogue of
\cite[Eq.~(4.15)-(4.16)]{GMT20}). Moreover, if $D_{\underline
  x}$ is the diameter of $\underline
x=(x_1,\ldots,x_n)$ and if the minimal separation among the elements
of $\underline x$ is larger than $c_0 D_{\underline
  x}$ for some positive constant $c_0$, then, for
$\theta$ equal to, say, $1/2$ (in general,
$\theta$ can be any positive constant smaller than
$1-O(\lambda)$) the remainder term is bounded as
$|\text{Err}(e_1,\ldots,e_n)|\le C_{n,\theta}(c_0)D_{\underline
  x}^{-n-\theta}$. The latter bound is the analogue of
\cite[Eq.~(6.90)]{GMTaihp}.  Moreover\footnote{See the discussion after \cite[Eq.~(6.94)]{GMTaihp} for
references about the properties of $S^{(s_1,\dots,s_n)}_{R;\o_1,\dots,\o_n}$ that are discussed in this paragraph.}, the functions
$S^{(s_1,\ldots,s_n)}_{R;\omega_1,\ldots,\omega_n}$ are non-zero only
if the quasi-particle indices satisfy the constraint $\sum_{i:
  s_i=2}\omega_i=0$ (this is the multi-point generalization of
\cite[Eq.~(6.92)]{GMTaihp}). Finally, and most importantly, if
$s_1=\cdots=s_n=1$, then
\begin{equation}
  S^{(1,\ldots,1)}_{R;\omega_1,\ldots,\omega_n}(x_1,\ldots, x_n)\equiv
  0, \qquad n>2,\label{questa}\end{equation} which is the analogue of
\cite[Eq.(6.94)]{GMTaihp} and is an instance of `bosonization' for
the reference model: in fact, \eqref{questa} can be interpreted by
saying that the $n$-point (with
$n>2$) truncated density correlations  of the reference model (recall Footnote \ref{foot.mass} for the definition of `density' and `mass' observables) are all
identically equal to zero. 

In conclusion, in the right hand side of \eqref{uffa} we can replace
$\mathbb E_\lambda(\mathds 1_{e_1};\cdots;\mathds
1_{e_n})$ by the right hand side of \eqref{epperro}, where the term
with
$s_1=\cdots=s_n=1$ vanishes. Therefore, all the terms we are left with
either involve oscillating factors $\prod_{r:
  s_r=2}e^{2ip^{\omega_r}\cdot
  x_r}$ or the remainder term
$\text{Err}(e_1,\ldots,e_n)$. In both cases, exactly like in the case
$n=2$, the contribution of these terms to the
$n$-fold summation over
$e_1,\dots,e_n$ in \eqref{uffa} is bounded better than the naive dimensional
estimate, and we are led to the bound in \eqref{higher}. For a
detailed discussion of how the estimate of the summation is performed,
we refer the reader to \cite[Section~7.2]{GMTaihp}.

\appendix

\section{An explicit example of non-planar dimer model}\label{app:B}
Here, we work out the Grassmann potential $V$ for the easiest but non-trivial example of non-planar dimer model.
Choose $m=4$ for the cell size and let  the edge weights be invariant by translations by multiples of $m$, so that
$V_x$ in Proposition \ref{prop:1} does not depend on $x$. In this
example we add just one non planar edge per cell, denoted by $e_\lambda$,
connecting the leftmost black site in the second row to the rightmost
white in the same row; it crosses two vertical edges, denoted by
$e_1,e_2$, see Figure \ref{fig:12}. \begin{figure}[H]
\centering
\includegraphics[scale=0.7]{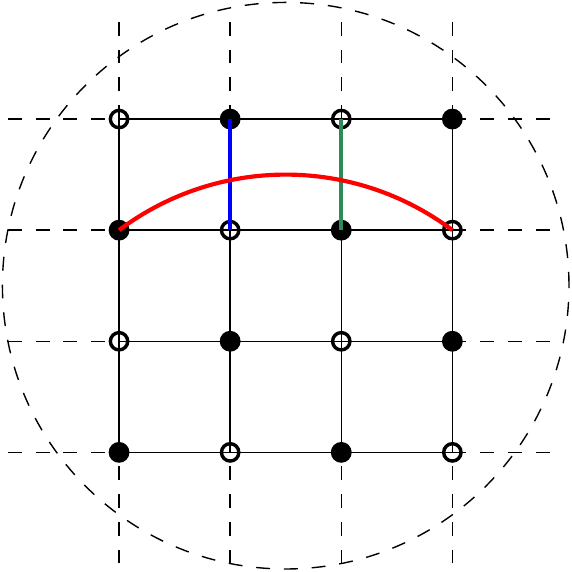}
\caption{A $4\times 4$ cell with the edges $e_\l,e_1,e_2$ colored in red, blue, green, respectively.}
\label{fig:12}
\end{figure}
Let
$\psi(e_\lambda),\psi(e_1),\psi(e_2)$ be the Grassmann monomials defined in \eqref{eq:psie} (we drop the index $\theta$). From
the definition \eqref{eq:Vx}, one can check that the potential
satisfies $V(\psi)=F(\psi)$ and that it is given by
\begin{equation}
V(\psi)=\varepsilon_\emptyset^{\{e_\lambda\}}\psi(e_\lambda)+\varepsilon^{\{e_\l\}}_{\{e_1\}} \psi(e_\l)\psi(e_1)+\varepsilon^{\{e_\l\}}_{\{e_2\}} \psi(e_\l)\psi(e_2)+\varepsilon^{\{e_\l\}}_{\{e_1,e_2\}} \psi(e_\l) \psi(e_1) \psi(e_2).
\end{equation}
The computation of the signs $\varepsilon_S^{J}$  can be easily done starting from \eqref{veryverysimple} and with the help of Fig. \ref{fig:ex}; details are left to the reader.
The final result  is that
\begin{equation}
\varepsilon_\emptyset^{\{e_\lambda\}}=\varepsilon^{\{e_\l\}}_{\{e_1,e_2\}}=1, \varepsilon^{\{e_\l\}}_{\{e_1\}}=\varepsilon^{\{e_\l\}}_{\{e_2\}}=-1.
\end{equation}
\begin{figure}[H]
\centering
\includegraphics[scale=1.1]{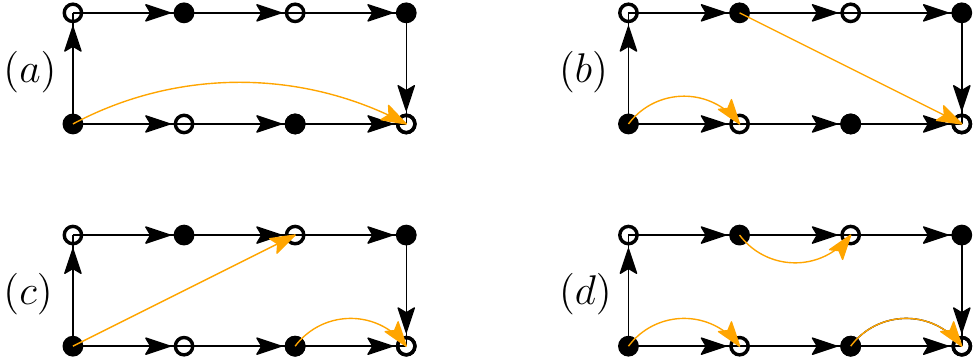}
\caption{The set of edges $E_{J,S}$ with  $J=\{e_\l\},S=\emptyset$ (drawing (a)), $J={\{e_\l\},S=\{e_1\}}$ (drawing (b)), $J={\{e_\l\},S=\{e_2\}}$ (drawing (c)), $J={\{e_\l\},S=\{e_1,e_2\}}$ (drawing (d)), colored in orange. Here the orientation of black edges coincides with that on $G^0_L$ (see Fig. \ref{fig:7}), while that of orange edges is the one described in Lemma \ref{lem:KJS} and in the caption of Figure \ref{fig:5}.}
\label{fig:ex}
\end{figure}

\section*{Acknowledgments}
We wish to thank Beno\^it Laslier for contributing with ideas and discussions in the early stages of this project.
A. G. gratefully acknowledges financial support of the European Research Council (ERC)
under the European Union’s Horizon 2020 research and innovation programme (ERC
CoG UniCoSM, grant agreement n. 724939), and of  MIUR, PRIN 2017 project MaQuMA,
PRIN201719VMAST01.
F. T. gratefully acknowledges financial support of the  Austria Science Fund (FWF), Project Number P 35428-N.

\end{document}